 \theoremstyle{theorem}
 \newtheorem{thm}{Theorem}[section]
 \newtheorem{lem}[thm]{Lemma}
  \newtheorem{rem}[thm]{Remark}
 \newtheorem{prop}[thm]{Proposition}
 \newtheorem{cor}[thm]{Corollary}
\newcommand{\R}{\mathbb {R}}
 \title{``Hidden'' mechanisms for Gouy-Chapman layer and other critical features  via Poisson-Boltzmann equations}
  \author{Kaiying Huang\footnote{College of Mathematics, Sichuan University, Chengdu, China, 610064, P. R. China ({\tt huangky@scu.edu.cn}).}\; and  Weishi Liu\footnote{Department of Mathematics, University of Kansas,
1460 Jayhawk Blvd., Room 405,
Lawrence, Kansas 66045, USA ({\tt wsliu@ku.edu}).
}}
   \date{\empty}
\begin{document}
 \maketitle

 \begin{abstract}
In this work a dynamical system approach is taken to systematically investigate the one-dimensional classical Poisson-Boltzmann  (PB) equation with various boundary conditions. This framework, particularly, the phase space portrait,  has a unique advantage of a geometric view of the dynamical systems, which allows one to reveal and examine critical features of the PB models. More specifically, we are able  to reveal the mechanism of Gouy-Chapman layer:  the presence of an {\em equilibrium} for the PB equation, including equilibrium-at-infinity for Gouy-Chapman's original setup as the limiting case.
Several other critical, somehow counterintuitive, features revealed in this work are the saturation phenomenon of surface charge density, the uniform boundedness of electric pressure (given length) and of length (given electric pressure) in surface charge, and the critical length for  a reversal of electric force direction. All have a common mechanism: the presence of an equilibrium for the PB equation. 
We believe that the critical features presented from the classical PB models persist for modified PB systems up to a certain degree. On the other hand, any qualitative change in these features as the sophistication of the model is increasing is an indication of  new phenomena with new mechanisms.
\end{abstract}

\newpage

\tableofcontents

 \newpage
 \section{Introduction}\label{Intro}
  \setcounter{equation}{0}
Ion distribution in aqueous solutions plays a fundamental  role in  electrochemical applications and biological processes (see, e.g., \cite{i2,i3,i1}). The Poisson-Boltzmann (PB) type equations serve as basic models for studying and characterizing
the electric potential and equilibrium distribution of  ions around macromolecules  such as DNA,  protein or  colloid  in  electrolytes, and  have been successfully applied in many fields and led to  quite accurate predictions in experiments such as protein titration
behavior (see, e.g., \cite{re3,MAP2021,re2}).
\medskip

The classical PB equation is based on a continuum mean field-like approach. It neglects  the effects of the ionic correlations and needs improvements, particularly,  for   experimental phenomena at large voltage or high bulk concentrations (see, e.g., \cite{111,222}). Many modified PB type equations have been developed, by including various excess  potentials to account for effects such as finite ion sizes, entropy contribution of water dipoles, and dependence of the dielectric upon the electric potential or local ionic concentration  (\cite{33,44,88,55,66,77}). Despite their significance, our comprehension of PB type equations remains limited primarily due to the inherent nonlinearity (\cite{re3,pain30,re5,re4}). Various numerical methods, such as finite element methods, interface methods, and adaptive methods, have been developed to simulate PB type equations (see, for instance, the comprehensive review article \cite{pbN} and the references therein).
However the classical PB equation may  be the most widely used and effective electrostatic model for analysis of ionic solutions (see, e.g., \cite{qian2,c3,c4,i4,66}). A number of significant properties/phenomena were discovered via the classical PB models, notably the Gouy-Chapman layer (\cite{Cha13, Gouy10}), Debye screening (\cite{DH23}), etc.. On the other hand, even for the simplest classical PB equation, there is still ample room for improvement. 

In this work, we will revisit some well-known phenomena and relevant physical quantities in electrolytes via the classical PB equations, providing new insights from a perspective of dynamical systems. A great advantage of the dynamical system viewpoint is the intuitive/geometric  representation of typical solution behaviors by the phase portrait associated with the classical PB equation,
which proves to be extremely  beneficial. Specifically, it enables us to discover several crucial features presented in this paper and, to a certain extent, the discovery is by chance.  
%
Some highlights of our studies in terms of physical interpretations of analytical results are as follows:
\begin{itemize}
\item[(i)] Presence of equilibrium as  mechanisms for formation of Gouy-Chapman layer is revealed (cf Section \ref{layer});
\item[(ii)] The saturation phenomenon of surface charge density is recognized, and the asymptotic properties
of maximum surface charge density with respect to fixed charges are derived (cf Section \ref{secSCD});
\item[(iii)]
Interplays between the  electric pressure, surface charge density and separation distance are examined (cf Section \ref{sEP});
\item[(iv)]
The critical length required for direction reverse of electric force has been identified, and the effects of physical parameters on this critical length are discussed (cf Section \ref{critL}).
\end{itemize}

We believe
 the aforementioned findings are extremely important for PB theory and are hard to recognize without the view of phase portrait (global) dynamics of the PB equation.
\medskip


The rest of the paper is organized as follows. In Sections \ref{formulation} and \ref{PBphase}, we set up our problem and classify phase portraits of the planar system associated with the PB equation for all possible parameter ranges. In Section \ref{BVP}, we perform detailed analyses to enhance a quantitative understanding of the boundary value problems. Our main findings are presented in Section \ref{CF}, where the mechanisms for GC layer formation as well as  for critical features of several physical quantities are discussed.
A brief conclusion is drawn in Section \ref{concl}.


\section{Poisson-Boltzmann equations}\label{PBeqn}
 \setcounter{equation}{0}

\subsection{Setup of the system and boundary conditions}\label{formulation}

\begin{figure}[htpp]\label{fig33}
\centering
\includegraphics[width=0.5\textwidth]{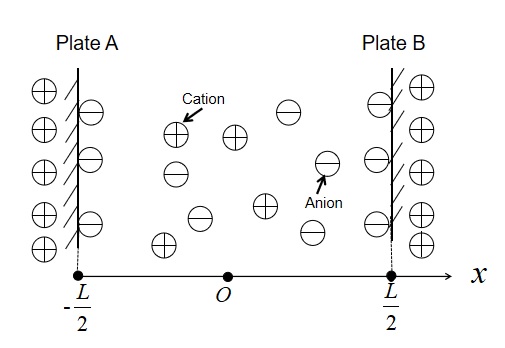}
\caption{
Schematic of a container along $x$ direction bounded by two parallel plates filled with an electrolyte solution.
}
\label{figf}
\end{figure}

Consider an ionic solution/mixture  with $N$ types of  ion species immersed in solvent  between two parallel plates of distance $L$ (see Figure \ref{figf}). The Poisson equation  for the electric potential $\varphi$ is given by
\begin{align}\label{gPBe}
-\frac{{\rm d}}{{\rm d}x}\left(\varepsilon_r(x)\varepsilon_0\frac{{\rm d}\varphi}{{\rm d}x}\right)=e_0\left(\sum_{j=1}^Nz_jc_j +q(x)\right),
\end{align}
where  $\varepsilon_r$ is  the relative
dielectric coefficient, $\varepsilon_0$ is the vacuum permittivity, $e_0$ is the elementary charge, $q(x)$ is the fixed charge, and, for $j=1,\cdots,N$,  $c_j$ and $z_j$ are the concentration and valence of the $j$-th ion species.

In equilibrium (no-flux) condition, for each $j$, the electrochemical potential $\mu_j=\mu_j^b$  is  constant with $\mu_j^b$ being the bulk chemical potential. The corresponding concentrations  $c_j=c_j(\varphi)$, $j=1,\cdots,N$, called the \emph{Boltzmann distributions}, are typically expressible in terms of the electric potential $\varphi$. In ideal ionic solutions for which  ions are treated as point-charges,  the  electrochemical potential $\mu_{j}$ is given by
 \begin{align}\label{uid}
\mu_{j}=\mu_j(\varphi,c_j)=z_{j} e_0 \varphi+k_B T \ln \frac{c_j}{C},
 \end{align}
  where $k_B$ is the Boltzmann constant, $T$ is the absolute temperature, and $C$ is a characteristic concentration. The corresponding Boltzmann distributions are
\begin{align}\label{dist}
c_j(\varphi)=c_j^be^{-\frac{z_je_0}{k_BT}\varphi},~j=1,\cdots,N,
\end{align}
where $c_j^b$ is the bulk concentration at where the electric potential $\varphi=0$.
Thus,  {\em in the equilibrium situation,} the Poisson equation (\ref{gPBe})  is then reduced to the classical PB  equation 
\begin{align}\label{cPB11}
-\frac{{\rm d}}{{\rm d}x}\left(\varepsilon_r(x)\varepsilon_0\frac{{\rm d}\varphi}{{\rm d}x}\right)=e_0\Big(\sum_{j=1}^Nz_jc_j^be^{-\frac{z_je_0}{k_BT}\varphi}+q(x)\Big).
\end{align}

For this work, we assume that the relative dielectric coefficient $\varepsilon_r(x)=\varepsilon_r$ and the fixed charge $q(x)=q_0$  with constants $\varepsilon_r$ and $q(x)=q_0$. Introduce the dimensionless variables
\begin{align}\label{sca1}
&\tilde{x}=\frac{x}{\lambda_D},~~\tilde L=\frac{L}{\lambda_D},~~\tilde{\varphi}=\frac{e_0}{k_BT}\varphi,~~{\tilde c_j^b}=\frac{c_j^b}{\sum_{i=1}^Nz_i^2c_i^b},\quad
\tilde q_0=\frac{q_0}{\sum_{i=1}^Nz_i^2c_i^b},
\end{align}
 where $\lambda_D$ is the Debye length given by
 \begin{align}\label{sca2}
 \lambda_D=\sqrt{\frac{\varepsilon_r\varepsilon_0k_BT}{e_0^2\sum_{i=1}^Nz_i^2c_i^b}}.
 \end{align}
The dimensionless version of PB (\ref{cPB11})  (after omitting the tildes) is given by
\begin{align}\label{PB}
\frac{{\rm d}^2\varphi}{{\rm d}x^2}+f(\varphi)=0\;\mbox{ where }\; f(\varphi)=\sum_{j=1}^{N} z_jc_j^be^{-z_j\varphi}+q_0.
\end{align}

 Associated to different setups of physical problems, two major types of boundary conditions are often considered:
 {\em Dirichlet boundary conditions}
\begin{align}\label{bc}
\varphi\left(-\frac{L}{2}\right)=\varphi_0\;\mbox{ and }\;\varphi\left(\frac{L}{2}\right)=\varphi_1;
\end{align}
and {\em  Neumann boundary conditions}
\begin{align}\label{bcc2}
\frac{{\rm d}\varphi}{{\rm d}x}\left(-\frac{L}{2}\right)=\sigma_0\;\mbox{ and }\;\frac{{\rm d}\varphi}{{\rm d}x}\left(\frac{L}{2}\right)=\sigma_1.
\end{align}

The existence and uniqueness of solutions of either the Dirichlet  boundary value problem (D-BVP) (\ref{PB}) and (\ref{bc}) or the Neumann boundary value problem (N-BVP) (\ref{PB}) and (\ref{bcc2}) can be established by classical theory such as  the standard theory of second-order ordinary differential equations and the upper-lower solution method. In this work we will apply dynamical system approach based heavily on  the phase plane portrait of the system associated with (\ref{PB}), and go much beyond the existence and uniqueness
to detect new features  and reveal underlying mechanisms, as presented in Section \ref{CF}.

We also mention that the unequal surface potentials ($\varphi_0\neq\varphi_1$) on the plates affect the monotonicity of the solution of the PB equation, see Section \ref{critL} for details. However, whether the charges on the plates are of equal magnitude or not does not have a fundamental impact on the analysis of our content of interest. For the sake of mathematical conciseness and to facilitate the comparison of our results with existing ones, without loss of generality, we only consider the symmetric Neumann boundary conditions
\begin{align}\label{nbc0}
\frac{{\rm d}\varphi}{{\rm d}x}\left(-\frac{L}{2}\right)=-\sigma,~~\frac{{\rm d}\varphi}{{\rm d}x}\left(\frac{L}{2}\right)=\sigma.
\end{align}
Here $\sigma>0$ (resp. $\sigma<0$) indicates that both two plates carry positive (resp. negative) charges.

Introducing the electric field intensity $u={{\rm d}\varphi}/{{\rm d}x}$,  the second-order PB equation (\ref{PB}) is rewritten
as a system of first-order equations, to be called {\em the PB system},
\begin{align} \label{Hpb}
\frac{{\rm d}\varphi}{{\rm d}x}= u,\quad \frac{{\rm d}u}{{\rm d}x}=-f(\varphi).
\end{align}
The PB system (\ref{Hpb}) is a Hamiltonian system (in fact, a Newtonian system)
 \begin{align*}
\frac{{\rm d}\varphi}{{\rm d}x}= \frac{\partial \mathcal{H}(\varphi,u)}{\partial u},\quad  \frac{{\rm d}u}{{\rm d}x}= -\frac{\partial \mathcal{H}(\varphi,u)}{\partial \varphi},
\end{align*}
with a Hamiltonian function (sum of the kinetic and potential energies)
\begin{align}\label{Hfun}
\mathcal{H}(\varphi,u)=\frac{1}{2}u^2+F(\varphi)\;\mbox{ where }\; F(\varphi)=\int f(\varphi)d\varphi=q_0\varphi-\sum_{j=1}^{N} c_j^be^{-z_j\varphi}.
\end{align}

For this specific problem, a special property of the Hamiltonian function ${\cal H}$ in (\ref{Hfun}) is that the potential $F(\varphi)$   is a {\em concave} function, that is,
\begin{align}\label{concaveF}
F''(\varphi)=-\sum_{j=1}^{N} z_j^2c_j^be^{-z_j\varphi}<0.
\end{align}
 The concavity of $F$ guarantees particularly the uniqueness of solutions of D-BVP (\ref{PB}) and (\ref{bc}) or the  N-BVP (\ref{PB}) and (\ref{bcc2}) (see Section \ref{BVP}).

\subsection{Phase portraits of PB systems}\label{PBphase}

\begin{figure}[htpp]
\centering
            \subfigure[Case i Saddle case]{
\includegraphics[width=0.3\textwidth]{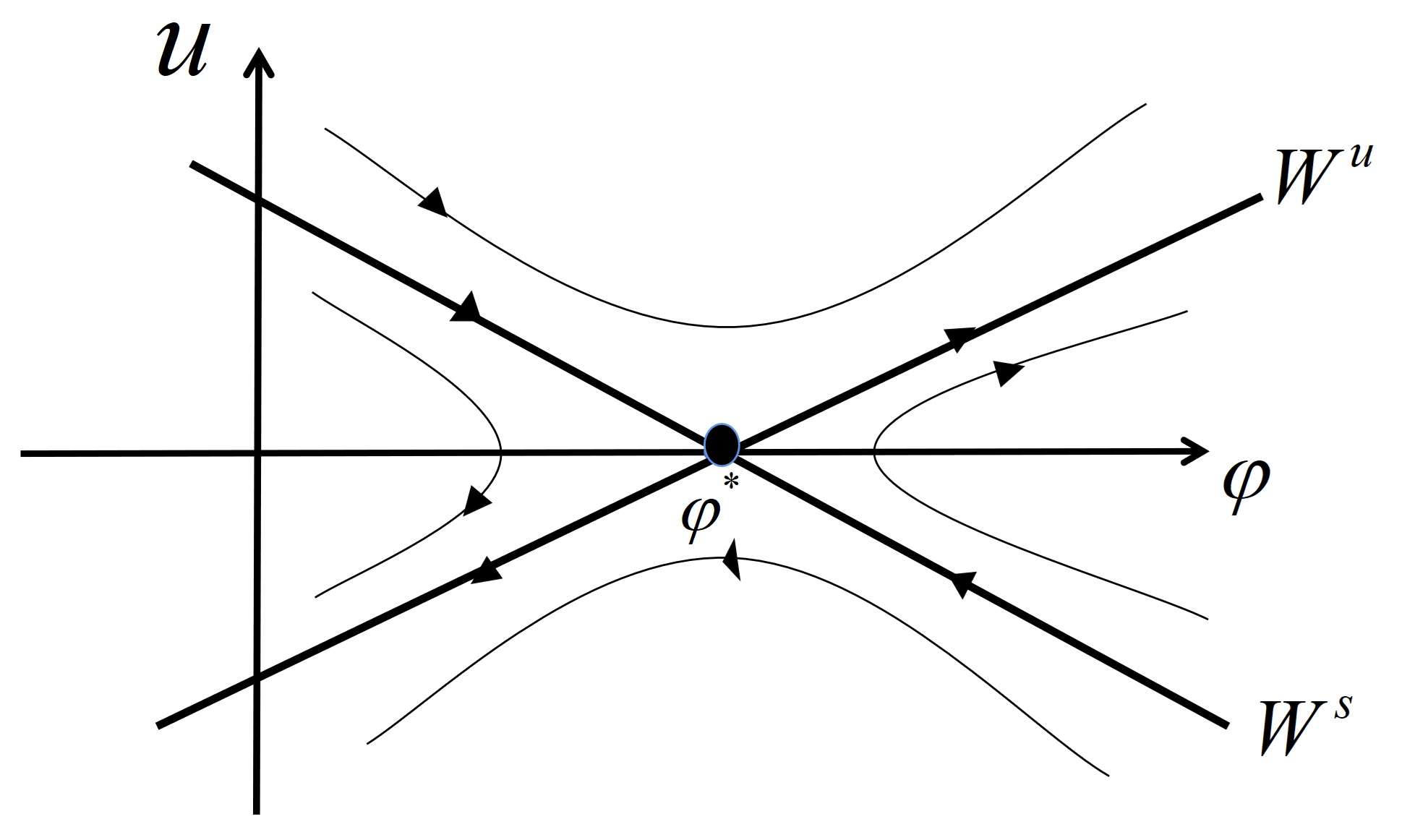}}
            \subfigure[Case ii.1 Equilibrium-at-infinity]{
\includegraphics[width=0.3\textwidth]{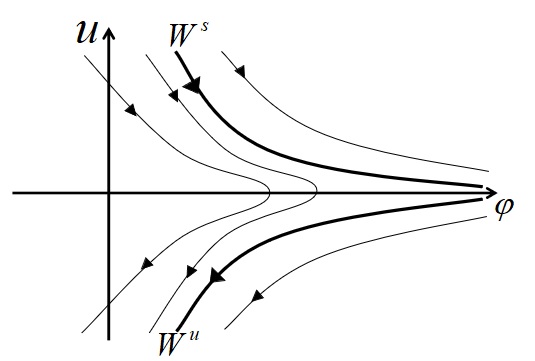}}
            \subfigure[Case ii.2 Equilibrium-at-infinity]{
\includegraphics[width=0.3\textwidth]{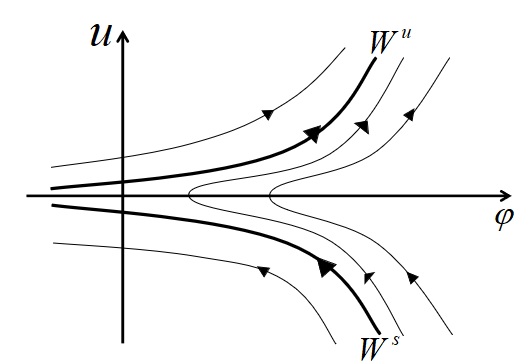}}\\
            \subfigure[Case iii.1 No equilibrium]{
\includegraphics[width=0.3\textwidth]{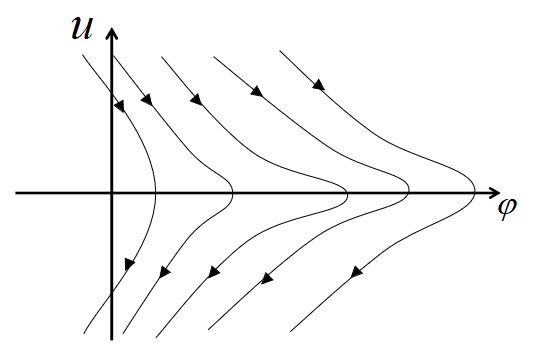}}
            \subfigure[Case iii.2 No equilibrium]{
\includegraphics[width=0.3\textwidth]{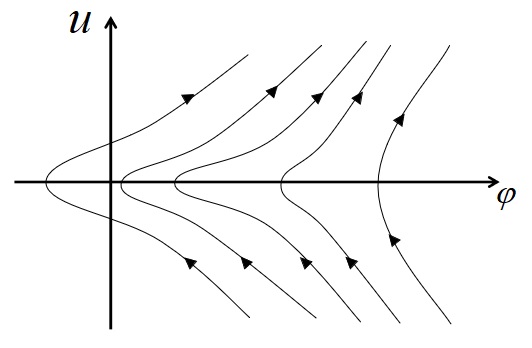}}
\caption{\em Phase portraits of system (\ref{Hpb}) for different cases.
}
\label{figp}
\end{figure}

As pointed out in the introduction, we will apply dynamical system theory to study  system (\ref{Hpb}) and the BVPs by starting with the phase plane portraits of system (\ref{Hpb}). We will often refer to the spatial variable $x$ as {\em time} for the dynamical system generated by system (\ref{Hpb}), for example, we will speak of the {\em time} taken for an orbit of system (\ref{Hpb}) moving from a point $A$ in the phase space to a point $B$.

The Hamiltonian (or Newtonian) structure of system (\ref{Hpb}) asserts that  the phase plane portrait is symmetric with respect to the $\varphi$-axis,  the level set ${\cal L}_h=\{(\varphi,u)\in\mathbb{R}^2|\mathcal{H}(\varphi,u)=h\}$ with level $h$ of the Hamiltonian function $\mathcal{H}(\varphi,u)$ is invariant, and a typical level set  is  a  one-dimensional curve. These properties allow one to get a \emph{complete} classification of phase plane portraits of systems (\ref{Hpb}) for all cases. For an important reason to be  easily seen later, we will call $E_{\infty}=(\infty,0)$ (resp.  $E_{-\infty}=(-\infty,0)$) an {\em equilibrium-at-infinity}  for system (\ref{Hpb}) if $f(\infty)=0$ (resp. $f(-\infty)=0$).
\begin{prop}\label{th1}
System (\ref{Hpb}) has five  distinct phase portraits detailed below.
\begin{itemize}
\item[{\rm(i)}] {\bf [Saddle]}  If, either $z_iz_j<0$ for some $i,j\in\{1,\cdots,N\}$ or $z_jq_0<0$ for all $j\in\{1,\cdots,N\}$, then system (\ref{Hpb}) has exactly one saddle equilibrium $(\varphi^*,0)$ with  $\varphi^*\in (-\infty,\infty)$ (see Figure \ref{figp}(a)).
\item[{\rm(ii)}] {\bf [Equilibrium-at-infinity]} There are two similar situations:

{\rm(ii.1)} If $q_0=0$ and $z_j>0$ for all $j=1,\cdots,N$, then system (\ref{Hpb}) has an equilibrium-at-infinity  $E_{\infty}=(\infty,0)$. The level set
${\cal L}_0$ with the level $h=0$ is 
\[{\cal L}_0=E_{\infty}\cup W^s\cup W^u,\]
where $W^s$ and $W^u$ are two curves (bold curves in Figure \ref{figp}(b))  given by
\[W^s=\{(\varphi,u): u=\sqrt{-2F(\varphi)}\} \;\mbox{ and }\; W^u=\{(\varphi,u): u=-\sqrt{-2F(\varphi)}\}.\]

The phase plane is separated by  ${\cal L}_0$ into  two regions:  one is bounded by ${\cal L}_0$ and  consists of level curves ${\cal L}_h$ with $h<0$ that intersect with the $\varphi$-axis; the other region consists of level curves ${\cal L}_h$ with $h>0$ that have no intersection with the $\varphi$-axis, as sketched in Figure \ref{figp}(b).

{\rm(ii.2)} If $q_0=0$ and $z_j<0$ for all $j=1,\cdots,N$, then system (\ref{Hpb}) has an equilibrium-at-infinity $E_{-\infty}=(-\infty,0)$.
The level set
${\cal L}_0$ with the level $h=0$ is 
\[{\cal L}_0=E_{-\infty}\cup W^s\cup W^u,\]
where   $W^s$ and $W^u$ are two curves (bold curves in Figure \ref{figp}(c)) given by
\[W^s=\{(\varphi,u): u=\sqrt{-2F(\varphi)}\} \;\mbox{ and }\; W^u=\{(\varphi,u): u=-\sqrt{-2F(\varphi)}\}.\]
The phase space is separated by  ${\cal L}_0$ into  two regions:  one is bounded by ${\cal L}_0$ and  consists of level curves ${\cal L}_h$ with $h<0$ that intersect with the $\varphi$-axis and the other region consists of level curves ${\cal L}_h$ with $h>0$ that have no intersection with the $\varphi$-axis, as sketched in Figure \ref{figp}(c).

 \item[{\rm(iii)}] {\bf [No equilibrium]} There are two similar situations:

{\rm(iii.1)} If $q_0>0$ and $z_j>0$ for all $j=1,\cdots,N$, then system (\ref{Hpb}) has no  equilibrium. Every level curve ${\cal L}_h$ is bounded from left and intersects with the $\varphi$-axis,  as sketched in Figure \ref{figp}(d).

{\rm(iii.2)} If $q_0<0$ and $z_j<0$ for all $j=1,\cdots,N$, then system (\ref{Hpb}) has no  equilibrium.  Every level curve ${\cal L}_h$ is bounded from right and intersects with the $\varphi$-axis,  as sketched in Figure \ref{figp}(e).
\end{itemize}
\end{prop}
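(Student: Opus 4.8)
The plan is to reduce the entire classification to elementary analysis of the scalar function $f$ and its antiderivative $F$, exploiting the concavity (\ref{concaveF}) already established. A point is an equilibrium of (\ref{Hpb}) precisely when it has the form $(\varphi^*,0)$ with $f(\varphi^*)=0$; since $f=F'$ is strictly decreasing by (\ref{concaveF}), it has at most one zero, so (\ref{Hpb}) has at most one finite equilibrium. The next step is to record the one-sided limits of $f$ (assuming, as is physical, that every $z_j\neq0$; a species with $z_j=0$ contributes nothing to $f$ and only an additive constant to $F$, so it may be discarded): as $\varphi\to+\infty$ the term $z_jc_j^be^{-z_j\varphi}$ tends to $0$ if $z_j>0$ and to $-\infty$ if $z_j<0$, whence $f(+\infty)=q_0$ when all $z_j>0$ and $f(+\infty)=-\infty$ when some $z_j<0$; symmetrically $f(-\infty)=q_0$ when all $z_j<0$ and $f(-\infty)=+\infty$ when some $z_j>0$. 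The same computation gives $F(\pm\infty)=-\infty$ in every configuration in which $f$ changes sign, $F$ strictly increasing with range $(-\infty,0)$ when $q_0=0$ and all $z_j>0$, and $F$ a strictly increasing bijection of $\R$ when $q_0>0$ and all $z_j>0$ (together with their mirror images under $z_j\mapsto-z_j$).

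Combining the strict monotonicity of $f$ with these limits via the intermediate value theorem yields a trichotomy matching the hypotheses of (i)--(iii) and exhausting all parameters: (a) if $z_iz_j<0$ for some $i,j$, or $z_jq_0<0$ for all $j$, then $f(-\infty)>0>f(+\infty)$, so $f$ has a unique zero $\varphi^*\in(-\infty,\infty)$ --- Case (i); (b) if $q_0=0$ and all $z_j>0$ (resp.\ all $z_j<0$), then $f$ has a fixed sign on $\R$ but $f(+\infty)=0$ (resp.\ $f(-\infty)=0$), so there is no finite equilibrium while $E_\infty$ (resp.\ $E_{-\infty}$) is an equilibrium-at-infinity by definition --- Case (ii); (c) if $q_0>0$ and all $z_j>0$ (resp.\ $q_0<0$ and all $z_j<0$), then $f$ is bounded away from $0$ on $\R$ and $f(\pm\infty)\neq0$, so (\ref{Hpb}) has no equilibrium at all --- Case (iii). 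In Case (i) I would then classify $(\varphi^*,0)$ by linearization: the Jacobian $\bigl(\begin{smallmatrix}0&1\\-f'(\varphi^*)&0\end{smallmatrix}\bigr)$ has eigenvalues $\pm\sqrt{-F''(\varphi^*)}$, which are real, nonzero and of opposite sign by (\ref{concaveF}), so $(\varphi^*,0)$ is a hyperbolic saddle (equivalently, a nondegenerate saddle critical point of $\mathcal{H}$, since $\tfrac12 u^2$ has a strict minimum and $F$ a strict maximum there).

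Finally I would read off the global level-set portraits from the shape of $F$, using that each $\mathcal{L}_h=\{u=\pm\sqrt{2(h-F(\varphi))}\}$ is defined over $\{\varphi:F(\varphi)\leq h\}$ and meets the $\varphi$-axis exactly where $F(\varphi)=h$. In Case (i), $F$ is strictly concave with $F(\pm\infty)=-\infty$ and unique maximum $h^*=F(\varphi^*)$: for $h<h^*$, $\{F\leq h\}$ is a union of two half-lines, so $\mathcal{L}_h$ has two unbounded branches each crossing the $\varphi$-axis once; for $h>h^*$, $\{F\leq h\}=\R$ and $\mathcal{L}_h$ splits into one entirely positive and one entirely negative unbounded branch; and $\mathcal{L}_{h^*}$ is the union of the saddle and its four separatrix orbits --- Figure \ref{figp}(a). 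In Case (ii.1), $F$ is strictly increasing with range $(-\infty,0)$: for $h<0$ one gets a single unbounded branch crossing the $\varphi$-axis; for $h>0$, $\{F\leq h\}=\R$ and $\mathcal{L}_h$ never meets the axis; for $h=0$ one gets $\mathcal{L}_0=E_\infty\cup W^s\cup W^u$ with $W^s,W^u$ the graphs of $u=\pm\sqrt{-2F(\varphi)}$, along which $u\to0$ and $\varphi\to+\infty$ as $x\to+\infty$ (resp.\ $x\to-\infty$), so they are the stable and unstable orbits of $E_\infty$; and $\mathcal{L}_0$ separates $\{\mathcal{H}<0\}$ (foliated by the $h<0$ curves) from $\{\mathcal{H}>0\}$ (foliated by the $h>0$ curves). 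In Case (iii.1), $F$ is a strictly increasing bijection of $\R$, so for \emph{every} $h$ the set $\{F\leq h\}$ is a single proper half-line and $\mathcal{L}_h$ is one unbounded branch crossing the $\varphi$-axis once --- Figure \ref{figp}(d). Cases (ii.2) and (iii.2) follow from (ii.1) and (iii.1) by the symmetry $(\varphi,z_j)\mapsto(-\varphi,-z_j)$, which leaves (\ref{Hpb}) invariant. The only genuine obstacle is bookkeeping: pinning down $f(\pm\infty)$ and the monotonicity and range of $F$ for each of the seven sign configurations so that the three regimes align \emph{exactly} with the hypotheses in (i)--(iii), and then converting that information into the precise assertions about which level curves cross the $\varphi$-axis, which are bounded, and that $\mathcal{L}_0$ in Case (ii) is exactly $E_{\pm\infty}\cup W^s\cup W^u$ and partitions the plane as stated; the dynamical content (the saddle classification, the identification of $W^s,W^u$ as invariant manifolds) is then routine.
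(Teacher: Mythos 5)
Your proposal is correct and follows essentially the same route as the paper: use strict monotonicity of $f=F'$ (from concavity of $F$) together with the one-sided limits $f(\pm\infty)$ to get the trichotomy of zero/finite/infinite equilibria, classify the finite equilibrium as a hyperbolic saddle by linearization, and read off the global phase portraits from the level sets $\{\mathcal{H}=h\}=\{u^2=2(h-F(\varphi))\}$ via the shape and range of $F$. The only cosmetic difference is that you organize the level-set analysis more explicitly through the sets $\{F\le h\}$ and the symmetry $(\varphi,z_j)\mapsto(-\varphi,-z_j)$, whereas the paper treats (ii.1) and (iii.1) in detail and invokes the mirror cases directly; this does not change the substance of the argument.
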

\begin{proof} (i) It follows from $f'(\varphi)=-\sum_{j=1}^{N} z_j^2c_{j}^{b}e^{-z_j\varphi}<0$ that $f(\varphi)$ is decreasing in $\varphi$. Note that
\begin{align*}
\lim_{\varphi\rightarrow -\infty}f(\varphi)=\left\{
\begin{aligned}
 \infty, &~~\textit{if $z_j>0$ for some $j\in\{1,\cdots,N\}$},\\
 q_0,&~~\textit{if $z_j<0$ for all $j\in\{1,\cdots,N\}$},
\end{aligned}
\right.
\end{align*}
and
\begin{align*}
\lim_{\varphi\rightarrow \infty}f(\varphi)=\left\{
\begin{aligned}
-\infty, &~~\textit{if $z_j<0$ for some $j\in\{1,\cdots,N\}$},\\
q_0, &~~\textit{if $z_j>0$ for all $j\in\{1,\cdots,N\}$}.
\end{aligned}
\right.
\end{align*}
We then infer that $f(\varphi)=0$ has a unique root $\varphi=\varphi^*\in (-\infty,\infty)$ and system (\ref{Hpb}) has a unique equilibrium
$E=(\varphi^*,0)$.  The eigenvalues of the linearization of system (\ref{Hpb}) at $E$ are
\[\lambda_{1,2}=\pm\sqrt{\sum_{j=1}^{N} z_j^2c_{j}^{b}e^{-z_j\varphi^*}}.\]
Thus $E$ is a saddle. Any level set
${\cal L}_h=\{(\varphi,u)\in\mathbb{R}^2|\mathcal{H}(\varphi,u)=h\}$ of $\mathcal{H}$ is invariant. For typical $h$, the  level set
${\cal L}_h$ is  the union of graphs of two functions
\[u=\pm\sqrt{2(h-F(\varphi))}.\]
In particular, the stable and unstable manifolds (bold curves in Figure \ref{figp}(a)) lie on the same level set as that of $E$ with level
$h=\mathcal{H}(\varphi^*,0)=F(\varphi^*)$. The phase plane portrait of system (\ref{Hpb}) can be sketched easily, as shown in Figure \ref{figp}(a).

(ii) Take, for example, case (ii.1). It is clear that $f(\infty)=0$ so system (\ref{Hpb}) has an equilibrium-at-infinity $E_{\infty}=(\infty,0)$.
The two curves
\[W^s=\{(\varphi,u): u=\sqrt{-2F(\varphi)}\} \;\mbox{ and }\; W^u=\{(\varphi,u): u=-\sqrt{-2F(\varphi)}\}\]
lie on the same level curve ${\cal L}_0$ as that of $E_{\infty}$ and, are the stable and unstable manifolds of $E_{\infty}$ in the sense that the $\omega$-limit set
$\omega(\varphi,u)=\{(\infty,0)\}$ for $(\varphi,u)\in W^s$,  and the $\alpha$-limit set $\alpha(\varphi,u)=\{(\infty,0)\}$ for $(\varphi,u)\in W^u$ due to that
\[u=\pm\sqrt{-2F(\varphi)}\to 0\;\mbox{ as }\;\varphi\to \infty.\]
 Note that, $F(\varphi)<0$ for all $\varphi$. Thus, for $h<0$, the level curve ${\cal L}_h=\{u^2=2(h-F(\varphi))\}$ has a unique intersection with the $\varphi$-axis and is bounded by $W^s$ and $W^u$, and for $h>0$, the level curve ${\cal L}_h$ has no intersection with the $\varphi$-axis and lies above $W^s$ or below $W^u$.

 (iii) In this case, one has $|f(\varphi)|\ge |q_0|$ for $-\infty\le \varphi\le \infty$. Hence, system (\ref{Hpb}) has no equilibrium. Also  the image of $F(\varphi)$ is $\R$ which implies that  every level curve intersects with the $\varphi$-axis.
\end{proof}


The classification of the phase portraits of system (\ref{Hpb}) offers great convenience for investigating BVPs, as the phase portraits vividly illustrate the distribution and qualitative information of the solution trajectories of BVP.


\subsection{BVPs of the PB system (\ref{PB})} \label{BVP}
\subsubsection{Dirichlet boundary conditions} We will first examine existence, uniqueness and several properties  
for solutions of D-BVP (\ref{PB})-(\ref{bc}) with $\varphi_0=\varphi_1$. We mention that one can establish the results for $\varphi_0\neq\varphi_1$ in a similar way (see Section \ref{critL} for details).

\begin{thm}\label{thequ}
Assume  $\varphi_0=\varphi_1$. The D-BVP (\ref{PB})-(\ref{bc}) has a unique solution.
\end{thm}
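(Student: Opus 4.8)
The plan is to split the statement into uniqueness and existence, since the two call for different tools: the concavity of $F$ recorded in (\ref{concaveF}) is the engine for uniqueness, while for existence I would use either the classical upper–lower solution method or, more in the spirit of this paper, a time-map analysis built on the phase portraits of Proposition \ref{th1}.

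\emph{Uniqueness.} Suppose $\varphi_1,\varphi_2$ both solve (\ref{PB}) with $\varphi_i(\pm L/2)=\varphi_0$ and set $w=\varphi_1-\varphi_2$, so that $w(\pm L/2)=0$. Subtracting the two equations and applying the mean value theorem pointwise gives $w''=-(f(\varphi_1)-f(\varphi_2))=-f'(\xi(x))\,w$ for some $\xi(x)$ between $\varphi_1(x)$ and $\varphi_2(x)$. By (\ref{concaveF}), $f'=F''<0$, hence $a(x):=-f'(\xi(x))>0$ and $w$ solves $w''=a(x)w$ with zero Dirichlet data. A nontrivial such $w$ would attain a positive interior maximum or a negative interior minimum at some $x_0\in(-L/2,L/2)$, where $w(x_0)\neq0$ while $w''(x_0)$ carries the opposite weak sign, contradicting $w''(x_0)=a(x_0)w(x_0)$. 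Therefore $w\equiv0$. This is exactly the use of the concavity of $F$ announced after (\ref{concaveF}).

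\emph{Existence.} Since $f'=F''<0$, $f$ is strictly decreasing, so exactly one of three alternatives holds: $f$ has a unique zero $\varphi^*$ (the saddle case, Proposition \ref{th1}(i)); or $f>0$ on $\mathbb{R}$ (cases (ii.1),(iii.1)); or $f<0$ on $\mathbb{R}$ (cases (ii.2),(iii.2)). In the first case a pair of constant sub/super-solutions works: if $\varphi_0\ge\varphi^*$ take $\underline\varphi\equiv\varphi^*$ and $\overline\varphi\equiv\varphi_0$ (then $\underline\varphi''+f(\underline\varphi)=f(\varphi^*)=0$, $\overline\varphi''+f(\overline\varphi)=f(\varphi_0)\le0$, with $\underline\varphi\le\overline\varphi$ and the boundary values correctly ordered), and symmetrically if $\varphi_0<\varphi^*$. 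When $f>0$ I would keep the constant lower solution $\underline\varphi\equiv\varphi_0$ and take the downward parabola $\overline\varphi(x)=\varphi_0+\tfrac12 f(\varphi_0)\big(\tfrac{L^2}{4}-x^2\big)$, which matches the boundary data, dominates $\underline\varphi$, and obeys $\overline\varphi''+f(\overline\varphi)=-f(\varphi_0)+f(\overline\varphi)\le0$ because $f$ is decreasing and $\overline\varphi\ge\varphi_0$; the case $f<0$ is the mirror image. The classical upper–lower solution theorem then produces a solution trapped between $\underline\varphi$ and $\overline\varphi$.

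\emph{Phase-plane route and the main obstacle.} Alternatively one can read the theorem off Proposition \ref{th1}: a solution of the D-BVP is an orbit segment of (\ref{Hpb}) of ``time length'' $L$ whose two endpoints lie on the line $\{\varphi=\varphi_0\}$; inspecting the five portraits, only the orbits meeting the $\varphi$-axis in a single point $(\varphi_c,0)$ can carry such a segment, and the time-reversal symmetry of (\ref{Hpb}) forces the segment to be centered at that turning point. Thus the D-BVP is equivalent to solving $T(\varphi_c)=L/2$, where $T(\varphi_c)=\int \frac{d\varphi}{\sqrt{2(F(\varphi_c)-F(\varphi))}}$ is the transit time from $\varphi_0$ to the turning value $\varphi_c$ — which in passing shows the solution is even in $x$ with exactly one interior extremum. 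Here $T$ is continuous with $T(\varphi_c)\to0$ as $\varphi_c\to\varphi_0$, and the intermediate value theorem gives existence once one knows $T(\varphi_c)\to\infty$ as $\varphi_c$ runs to the ``far'' end of its range. I expect this last limit to be the main difficulty: near the finite saddle $\varphi^*$ it is the classical logarithmic blow-up of the transit time, while when $f$ has constant sign it must be extracted from a crude lower bound of the form $T(\varphi_c)\gtrsim|\varphi_c-\varphi_0|/\sqrt{\sup_\varphi(F(\varphi_c)-F(\varphi))}$ as the turning value escapes to the equilibrium-at-infinity or to $\pm\infty$, and one has to verify it uniformly across all five cases of Proposition \ref{th1}. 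Since the sub/super-solution argument avoids these estimates altogether, I would present it as the primary proof and keep the phase-plane picture as the conceptual explanation; monotonicity of $T$ (again a consequence of the concavity of $F$) would re-prove uniqueness, but the maximum-principle argument above is shorter.
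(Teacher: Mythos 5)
Your proof is correct, but your primary argument (maximum principle for uniqueness, explicit sub/super-solutions for existence) is genuinely different from the paper's. The paper proves Theorem \ref{thequ} entirely through the time-map $T(\alpha)$ of (\ref{time1}): it establishes the three facts in (\ref{TM}) --- $T(\alpha)\to 0$ as $\alpha\to\varphi_0^+$, $T(\alpha)\to\infty$ at the far endpoint, and $T'(\alpha)>0$ --- with the monotonicity of $T$ obtained by a change of variables to (\ref{time2}) and a computation showing that $H(x)=2F(x)-(x-\varphi_0)f(x)$ is increasing (again a consequence of $f'<0$). You instead exploit the concavity of $F$ directly: $w=\varphi_1-\varphi_2$ solves $w''=a(x)w$ with $a=-f'(\xi)>0$ and zero boundary data, so no nontrivial interior extremum can occur, giving uniqueness in a few lines; and you build ordered sub/super-solutions (constants flanking $\varphi^*$ in the saddle case, a parabola $\varphi_0+\tfrac12 f(\varphi_0)(L^2/4-x^2)$ when $f$ has constant sign) for existence. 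Both routes hinge on the same structural fact, $f'=F''<0$, but your version is shorter and avoids the delicate $T\to\infty$ estimates. What the paper's longer proof buys is the time-map itself: $T(\alpha)$, its limits, and its strict monotonicity are not just a means to this theorem but the main tool reused throughout Section \ref{BVP} and Section \ref{CF} (e.g.\ in the map $M(\alpha,\sigma)$ of (\ref{re2}), in Theorem \ref{thN}, Proposition \ref{tha}, and the GC-layer and pressure analyses), so the paper front-loads that work here. Your sketch of the phase-plane route is also accurate: you correctly identify that the remaining content is $T\to\infty$ at the far end of the range, that this is a logarithmic divergence near a finite saddle requiring a local comparison $f(\varphi)<2m(\varphi^*-\varphi)$, and that a cruder bound of the form $T\gtrsim(\varphi_c-\varphi_0)/\sqrt{\sup(F(\varphi_c)-F)}$ suffices when $f$ keeps one sign --- which is exactly how the paper argues in cases (ii) and (iii).
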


\begin{figure}[htpp]
\centering
\includegraphics[width=0.5\textwidth]{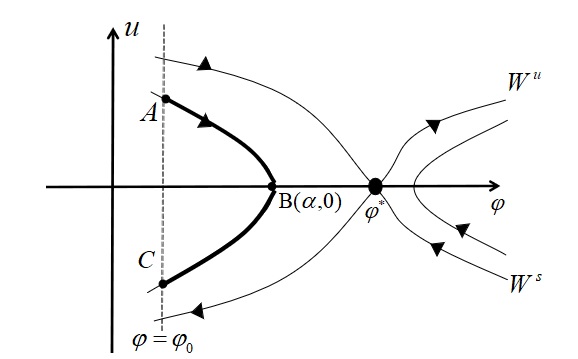}
\caption{
Saddle case with $\varphi_0<\varphi^*$:  an orbit of D-BVP
represented by the arc ABC.
}
\label{equ}
\end{figure}
\begin{proof} Depending on different phase plane portraits, we investigate the following cases to substantiate our conclusions.

(i) We first consider the saddle case with $\varphi^*\in (-\infty,\infty)$. For  $\varphi_0<\varphi^*$, consider the solution  $\varphi(x)$ of
D-BVP (\ref{PB}) with (\ref{bc}), whose orbit is represented by the arc ABC in Figure \ref{equ}.
System (\ref{Hpb}) admits the \emph{time-reversibility}, that is, if $(\varphi(x),u(x))$ satisfies (\ref{Hpb}),
then $(\varphi(-x),-u(-x))$ also satisfies (\ref{Hpb}), which implies the \emph{time} for the orbit taken from $A$ to $B$ is equal to the \emph{time} from $B$ to $C$. Then we see that $u(x)=\varphi'(x)>0$ for $x\in(-L/2,0)$ and $u(0)=\varphi'(0)=0$. Set $\alpha=\varphi(0)\in(\varphi_0,\varphi^*)$. Using the first integral $\mathcal{H}$, we have
\[\frac{u^2(x)}{2}+F(\varphi(x))=\mathcal{H}(\varphi(x),u(x))=\mathcal{H}(\alpha,0)=F(\alpha),\]
which leads to, for $x\in(-\frac{L}{2},0)$,
\[\frac{{\rm d}\varphi}{{\rm d}x}=\sqrt{2(F(\alpha)-F(\varphi))},~~\textit{or equivalently,}~~\frac{{\rm d}\varphi}{\sqrt{2(F(\alpha)-F(\varphi))}}={\rm d}x.\]
Integrating from $x=-L/2$ to $x=0$, one gets the  \emph{time} from $A$ to $B$
\[\int_{\varphi_0}^\alpha\frac{{\rm d}\varphi}{\sqrt{2(F(\alpha)-F(\varphi))}}=\frac{L}{2}.\]
For $\alpha\in(\varphi_0,\varphi^*)$, denote 
\begin{align}\label{time1}
T(\alpha):=\sqrt{2}\int_{\varphi_0}^\alpha\frac{{\rm d}\varphi}{\sqrt{F(\alpha)-F(\varphi)}}.
\end{align}
The function $T(\alpha)$ will be called the time-map. One has that the solutions of D-BVP (\ref{PB})-(\ref{bc})  is in one-to-one correspondence with the solutions $\alpha$'s of $T(\alpha)=L$. 

Next we claim that
\begin{align}\label{TM}
\lim_{\alpha\rightarrow \varphi_0^+}T(\alpha)=0,\quad \lim_{\alpha\rightarrow\varphi^{*-}}T(\alpha)=\infty,\quad \frac{{\rm d}T}{{\rm d}\alpha}>0.
\end{align}
As we commented earlier that $F$ is concave (see (\ref{concaveF})), or equivalently,  that $f(\varphi)$ is monotonically decreasing in $\varphi\in(\varphi_0,\varphi^*)$ and $f(\varphi^*)=0$. Thus,
\[F(\alpha)-F(\varphi)=\int_{\varphi}^\alpha f(s){\rm d}s>f(\alpha)(\alpha-\varphi)~~\textit{for}~\varphi_0<\varphi<\alpha<\varphi^*,\]
and
\[T(\alpha)<\sqrt{2}\int_{\varphi_0}^\alpha\frac{{\rm d}\varphi}{\sqrt{ f(\alpha)(\alpha-\varphi)}}=2^{\frac{3}{2}}\sqrt{\frac{\alpha-\varphi_0}{f(\alpha)}},\]
which implies $\lim_{\alpha\rightarrow\varphi_0^+}T(\alpha)=0$. Set $m=-f'(\varphi^*)>0$.
Then there exists $\delta>0$ such that for $\varphi\in(\varphi^*-\delta,\varphi^*)$ we have $f(\varphi)<2m(\varphi^*-\varphi)$. Similar to above for $\varphi^*-\delta<\varphi<\alpha<\varphi^*$ we have
\[F(\alpha)-F(\varphi)=\int_{\varphi}^\alpha f(s){\rm d}s<m((\varphi^*-\varphi)^2-(\varphi^*-\alpha)^2),\]
and
\[T(\alpha)>\sqrt{2}\int_{\varphi^*-\delta}^\alpha\frac{{\rm d}\varphi}{\sqrt{m((\varphi^*-\varphi)^2-(\varphi^*-\alpha)^2)}}=\sqrt{\frac{2}{m}}\ln\frac{\delta+\sqrt{\delta^2-(\varphi^*-\alpha)^2}}{\varphi^*-\alpha},\]
which implies $\lim_{\alpha\to\varphi^{*-}}T(\alpha)=\infty$. It should be pointed out that two limitations
$\lim_{\alpha\rightarrow\varphi_0^+}T(\alpha)=0,~\lim_{\alpha\rightarrow\varphi^{*-}}T(\alpha)=\infty$ can also be derived directly
from the phase portrait of the planar differential system (\ref{Hpb}). Finally, making the change of variable $\varphi=(\alpha-\varphi_0)t+\varphi_0$, one can transform $T(\alpha)$ into
\begin{align}\label{time2}
T(\alpha)=\sqrt{2}\int_{0}^1\frac{\alpha-\varphi_0}{\sqrt{F(\alpha)-F((\alpha-\varphi_0)t+\varphi_0)}}{\rm d}t,
\end{align}
which leads to
\begin{align}\label{Tdot}\begin{split}
\frac{{\rm d}T}{{\rm d}\alpha}=&\frac{\sqrt{2}}{2}\int_{0}^1\frac{H(\alpha)-H((\alpha-\varphi_0)t+\varphi_0)}{
(F(\alpha)-F((\alpha-\varphi_0)t+\varphi_0))^{3/2}}{\rm d}t\\
=&\frac{\sqrt{2}}{2}\int_{\varphi_0}^{\alpha}\frac{H(\alpha)-H(\varphi)}{
(F(\alpha)-F(\varphi))^{3/2}}{\rm d}\varphi,
\end{split}
\end{align}
where $H(x)=2F(x)-(x-\varphi_0)f(x)$. It follows from $f(\varphi^*)=0$ and $f'(\varphi)<0$ that $H'(x)=f(x)-(x-\varphi_0)f'(x)$ is positive for $x\in(\varphi_0,\varphi^*)$. Therefore $H(\alpha)-H((\alpha-\varphi_0)t+\varphi_0)$ is positive for $t\in(0,1)$, and
${\rm d}T/{\rm d}\alpha$ is also positive for $\alpha\in(\varphi_0,\varphi^*)$. It follows from (\ref{TM}) that BVP with $\varphi_0<\varphi^*$ has a unique
solution.

For the saddle case with $\varphi_0=\varphi^*$, its proof is trivial.
For the saddle case with $\varphi_0>\varphi^*$,  we can construct the  time-map
 \begin{align}
\hat T(\alpha):=2\int_{\alpha}^{\varphi_1}\frac{{\rm d}\varphi}{\sqrt{2(F(\alpha)-F(\varphi))}},~~~\alpha\in(\varphi^*,\varphi_1).
\end{align}
Using similar arguments as described above, we obtain
\[\lim_{\alpha\rightarrow\varphi^{*+}}\hat T(\alpha)=\infty,~\lim_{\alpha\rightarrow\varphi_1^-}\hat T(\alpha)=0,~~\frac{{\rm d}\hat T}{{\rm d}\alpha}<0,\]
which results in the statement.

(ii) For the equilibrium-at-infinity case (ii.1) in Proposition  \ref{th1},  we  consider the time-map $T(\alpha)$ for $\varphi_0<\alpha<\infty$. Clearly, we also have $
\lim_{\alpha\rightarrow\varphi_0^+}T(\alpha)=0,~\frac{{\rm d}T}{{\rm d}\alpha}>0. $ It suffices to 
show that $\lim_{\alpha\rightarrow\infty}T(\alpha)=\infty$. Note that $\lim_{\varphi\rightarrow\infty}f(\varphi)=0$, and there exist two positive constants $M,C$ such that $0<f(\varphi)<C$ for $\varphi>M$. Therefore, for $\alpha>\varphi>M$,
\[F(\alpha)-F(\varphi)=\int_{\varphi}^\alpha f(s){\rm d}s<C(\alpha-\varphi),\]
and hence,
\[T(\alpha)>\sqrt{2}\int_{M}^\alpha\frac{{\rm d}\varphi}{\sqrt{C(\alpha-\varphi)}}=2^{\frac{3}{2}}\sqrt{\frac{\alpha-M}{C}},\]
which implies $\lim_{\alpha\rightarrow\infty}T(\alpha)=\infty$. We thus complete the proof of statement (ii.1).

For the equilibrium-at-infinity case (ii.2) in Proposition \ref{th1},  one should consider the time-map $\hat T(\alpha)$ for $-\infty<\alpha<\varphi_0$ and show that
\[\lim_{\alpha\rightarrow-\infty}\hat T(\alpha)=\infty,~\lim_{\alpha\rightarrow\varphi_0^-}\hat T(\alpha)=0,~~\frac{{\rm d}\hat T}{{\rm d}\alpha}<0.\]

 (iii) Finally, the cases  (iii.1) and (iii.2) in Proposition \ref{th1} can be readily demonstrated by employing the identical reasoning as presented earlier for the equilibrium-at-infinity scenarios (ii.1) and (ii.2).
\end{proof}

\begin{rem}\label{monoTM}
We emphasize that it is the concavity property (\ref{concaveF}) of the potential function $F$ that gives the monotonicity of
the time-map $T(\alpha)$ defined in (\ref{time1}) for the PB system (\ref{PB}), which, in turn,
yields  the uniqueness  of solutions of BVPs. In general,   time-maps $T(\alpha)$ for nonlinear models
may not be monotonic, and multiple solutions of BVPs could exist.
\end{rem}

Using the phase space portraits of the PB system  (\ref{Hpb}), we can readily obtain the following observations in an intuitive manner: 
the solution $\varphi(x)$ of D-BVP (\ref{PB})-(\ref{bc}) with $\varphi_0=\varphi_1$, corresponding to each case in Proposition \ref{th1}, satisfies the following property.
\begin{itemize}
\item[{\rm (i)}] {\bf [{Saddle}]}

{\rm(i.1)} If $\varphi_0<\varphi^*$, then the electric field $u(x)=\varphi'(x)$ is decreasing in $x$.

{\rm(i.2)} If $\varphi_0=\varphi^*$, then the electric field $u(x)=\varphi'(x)\equiv 0$.

{\rm(i.3)} If $\varphi_0>\varphi^*$, then the electric field $u(x)=\varphi'(x)$ is increasing in $x$.

\item[{\rm (ii)}] {\bf [Equilibrium-at-infinity]}

{\rm(ii.1)} For $E_{\infty}$ case: The electric field  $u(x)=\varphi'(x)$ is decreasing in $x$.

{\rm(ii.2)} For $E_{-\infty}$ case:  The electric field  $u(x)=\varphi'(x)$ is increasing in $x$.

\item[{\rm (iii)}] {\bf [No equilibrium]}

{\rm(iii.1)} The electric field $u(x)=\varphi'(x)$ is  decreasing in $x$.

{\rm(iii.2)} The electric field  $u(x)=\varphi'(x)$ is  increasing in $x$.
\end{itemize}

%
%
%
%
%
%
%
%
%
%
%

\subsubsection{Neumann boundary conditions} Next we are concerned with the symmetric N-BVP (\ref{PB}) and (\ref{nbc0}). Without loss of generality, assume $\sigma>0$. (Otherwise one can consider $\tilde\varphi(x)=\varphi(-x)$.) In this case, the phase portrait of (\ref{Hpb}) must belong to one of cases (i), (ii.2) and (iii.2) in Proposition \ref{th1} if  the N-BVP (\ref{PB}) and (\ref{nbc0}) has a solution.

For easy of notation, denote $\varphi_e\in\{\varphi^*,~\infty,-\infty\}$  presenting the equilibrium in each cases. Let $F_e=F(\varphi_e)$, that is, 
$F_e=q_0\varphi^*-\sum_{j=1}^Nc_j^be^{-z_j\varphi^*}$ in the saddle case, and
$F_e=0$ in the equilibrium-at-infinity case. 

 An examination of  the phase portraits of PB systems gives that the Neumann boundary condition (\ref{nbc0}) corresponds to the Dirichlet boundary condition   $\varphi(-L/2)=\varphi(L/2):=\varphi_s$ via
\begin{align}\label{re1}
\frac{\sigma^2}{2}+F(\varphi_s)=F(\alpha),~~\varphi_s>\alpha=\varphi(0).
\end{align}
\begin{lem}\label{funB}
There exists a unique  function $\varphi_s=G(\alpha,\sigma)$ satisfying (\ref{re1}).
\end{lem}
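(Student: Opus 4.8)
The plan is to read (\ref{re1}) as a one-variable equation for $\varphi_s$, with $\alpha$ and $\sigma>0$ as parameters: setting $g(\varphi):=F(\varphi)-F(\alpha)+\sigma^2/2$, I need to show that $g$ has exactly one zero in the open half-line $(\alpha,\infty)$. Note at once that $g(\alpha)=\sigma^2/2>0$, i.e.\ the target level $F(\alpha)-\sigma^2/2$ lies \emph{strictly below} $F(\alpha)$; this elementary observation will do most of the work. First I would record two structural facts. (a) By the concavity (\ref{concaveF}), $F'=f$ is strictly decreasing, so $f$ changes sign at most once, from $+$ to $-$; hence $F$ restricted to $(\alpha,\infty)$ is either strictly decreasing throughout, or strictly increasing up to the saddle value $\varphi^*$ and strictly decreasing afterwards -- the first alternative being the only possibility in the equilibrium-at-infinity and no-equilibrium cases (ii.2) and (iii.2) of Proposition \ref{th1}, where $f<0$ everywhere. (b) A short inspection of $F(\varphi)=q_0\varphi-\sum_{j=1}^N c_j^b e^{-z_j\varphi}$ in each of the cases (i), (ii.2), (iii.2) shows that $F(\varphi)\to-\infty$ as $\varphi\to+\infty$: either some valence $z_j<0$ makes the term $-c_j^b e^{-z_j\varphi}$ diverge to $-\infty$, or all $z_j$ share a sign and then the sign of $q_0$ forces $q_0\varphi\to-\infty$ while the exponential sum stays bounded, or both effects occur.

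With (a) and (b) in hand, existence and uniqueness follow from the intermediate value theorem. If $F$ is strictly decreasing on $(\alpha,\infty)$, then it runs continuously from $F(\alpha)$ (the limit as $\varphi\to\alpha^+$) down to $-\infty$, so the level $F(\alpha)-\sigma^2/2\in(-\infty,F(\alpha))$ is attained at exactly one point $\varphi_s\in(\alpha,\infty)$. If instead $F$ increases on $(\alpha,\varphi^*)$ and decreases on $(\varphi^*,\infty)$ -- which can only happen in the saddle case, when $\alpha<\varphi^*$ -- then on the ascending branch $F\ge F(\alpha)>F(\alpha)-\sigma^2/2$, so no solution lies in $(\alpha,\varphi^*]$, while on the descending branch $F$ decreases from $F_e=F(\varphi^*)$ down to $-\infty$, and since $F(\alpha)-\sigma^2/2<F(\alpha)\le F_e$ this level is again reached at exactly one point $\varphi_s\in(\varphi^*,\infty)\subset(\alpha,\infty)$. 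In every case there is a unique $\varphi_s>\alpha$ solving (\ref{re1}), and setting $G(\alpha,\sigma):=\varphi_s$ proves the lemma. I would also add one short remark: at the solution one has $f(\varphi_s)<0$ (because $f<0$ everywhere in cases (ii.2), (iii.2), or $\varphi_s>\varphi^*$ in case (i)), so $\partial_{\varphi_s}\bigl(\tfrac{\sigma^2}{2}+F(\varphi_s)-F(\alpha)\bigr)=f(\varphi_s)\ne0$ and the implicit function theorem gives that $G$ is as smooth as $f$ in both variables -- useful later. (One could also note that an orbit of the symmetric Neumann problem crossing the $\varphi$-axis at $\alpha$ with $\sigma>0$ forces $f(\alpha)\le0$, but this restriction is not needed for the statement or its proof.)

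The only step needing genuine care is fact (b), the divergence $F(\varphi)\to-\infty$ as $\varphi\to+\infty$; this is what guarantees the target level is actually reached, and checking it amounts to a brief case analysis over the sign patterns of $\{z_j\}$ and $q_0$ that define cases (i), (ii.2), (iii.2). Everything else is forced: concavity (\ref{concaveF}) alone pins down the ``increasing-then-decreasing'' shape of $F$ on $(\alpha,\infty)$, and the hypothesis $\sigma>0$ pushes the target strictly below $F(\alpha)$, which is exactly what renders the possible ascending branch irrelevant and yields uniqueness.
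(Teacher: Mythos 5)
Your proof is correct. The paper itself offers no argument for this lemma; it is stated as a byproduct of the preceding ``examination of the phase portraits'' — the level curve $\{\mathcal{H}=F(\alpha)\}$ through $(\alpha,0)$ meets $|u|=\sigma$ at a unique point to the right of $\alpha$ — and your write-up supplies the analytic content of precisely that picture. Your fact (b), that $F\to-\infty$ as $\varphi\to+\infty$ in each of the admissible sign regimes (i), (ii.2), (iii.2) of Proposition~\ref{th1}, is indeed the one step that needs checking and you check it correctly, while your closing remark that $f(\varphi_s)<0$ (so the implicit function theorem applies to $G$) anticipates the derivative formulas the paper invokes later in the proof of Theorem~\ref{thN}. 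One small streamlining worth noting: since $g(\varphi)=F(\varphi)-F(\alpha)+\sigma^2/2$ is concave by (\ref{concaveF}), with $g(\alpha)=\sigma^2/2>0$ and $g\to-\infty$ by your fact (b), the set $\{g\ge 0\}\cap[\alpha,\infty)$ is automatically a single closed interval; uniqueness then drops out of concavity at once, without separating $F$ into ascending and descending branches around $\varphi^*$.
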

Then, by the proof of Theorem \ref{thequ}, we see that  solutions of BVP (\ref{PB}) and (\ref{nbc0}) is in one-to-one correspondence with solutions $\alpha=\alpha(\sigma,L)$ of 
\begin{align}\label{re2}
M(\alpha,\sigma)=\frac{L}{\sqrt{2}},~~{\rm where}~~M(\alpha,\sigma):=\int_{\alpha}^{G(\alpha,\sigma)}\frac{{\rm d}\varphi}{\sqrt{F(\alpha)-F(\varphi)}}.
\end{align}

\begin{figure}[htpp]
\centering
            \subfigure[case~(I)~with $\varphi_e=\varphi^*$]{
\includegraphics[width=0.32\textwidth]{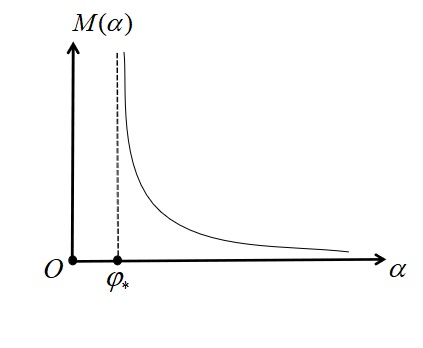}}
            \subfigure[case~(I)~with $\varphi_e=-\infty$]{
\includegraphics[width=0.32\textwidth]{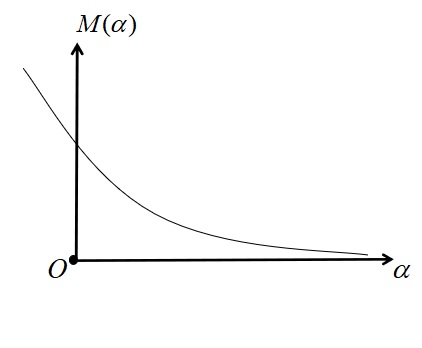}}
            \subfigure[case~(II)]{
\includegraphics[width=0.32\textwidth]{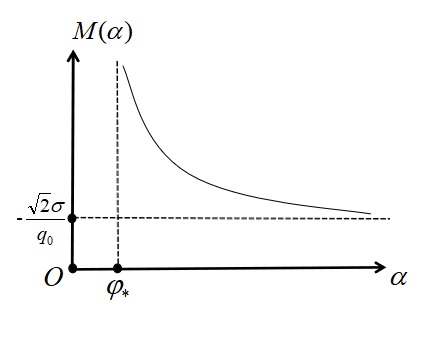}}
            \subfigure[case~(III)]{
\includegraphics[width=0.32\textwidth]{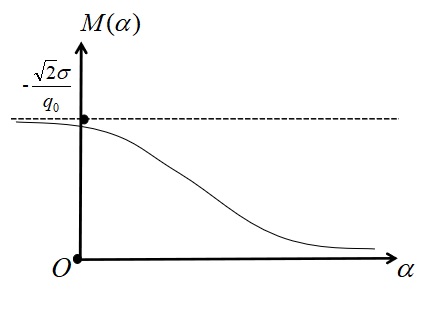}}
\\
\caption{
Sketch of the graph of the function $M(\alpha)$ for a fixed $\sigma$.
}
\label{Ma}
\end{figure}

\begin{thm}\label{thN}
Assume  $\sigma>0$. The N-BVP (\ref{PB}) and (\ref{nbc0}) has a unique solution $\varphi(x)$ if and only if one of the following three cases hold:
\begin{itemize}
\item[{\rm(I)}] either $z_iz_j<0$ for some $i,j\in\{1,\cdots,N\}$ or $z_i<0$ for all $i\in\{1,\cdots,N\}$ and $q_0\geq0$;

\item[{\rm(II)}] $z_j>0$ for all $j\in\{1,\cdots,N\}$, $q_0<0$ and
$$
(\sigma,L)\in \Omega_1=\Big\{(\sigma,L):L>-\frac{2\sigma}{q_0}\Big\}=\Big\{(\sigma,L):2\sigma+{q_0}L<0\Big\};
$$

\item[{\rm(III)}] $z_j<0$ for all $j\in\{1,\cdots,N\}$, $q_0<0$ and
$$
(\sigma,L)\in \Omega_2=\Big\{(\sigma,L):L<-\frac{2\sigma}{q_0}\Big\}=\Big\{(\sigma,L):2\sigma+{q_0}L>0\Big\}.
$$
\end{itemize}
\end{thm}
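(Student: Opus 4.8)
The plan is to work entirely through the reduction already set up: by Lemma \ref{funB} and the correspondence (\ref{re2}), a solution of the N-BVP is the same thing as a root $\alpha$ of the time-map equation $M(\alpha,\sigma)=L/\sqrt2$, where $\varphi_s=G(\alpha,\sigma)>\alpha$ is the function of Lemma \ref{funB}, determined by $\sigma^2/2+F(\varphi_s)=F(\alpha)$. Thus the theorem splits into (a) deciding which valence/$q_0$ configurations admit any solution orbit, and (b) for those, showing $M(\cdot,\sigma)$ is a strictly decreasing bijection from the natural $\alpha$-interval onto an interval whose preimage under $L=\sqrt2\,M$ is exactly $\Omega_1$, $\Omega_2$, or $\{L>0\}$.

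For (a): integrating $\varphi''=-f(\varphi)$ over $[-L/2,L/2]$ and using (\ref{nbc0}) gives $2\sigma=-\int_{-L/2}^{L/2}f(\varphi(x))\,{\rm d}x$, so $\sigma>0$ forces $f$ to be negative somewhere. The configurations outside (I)--(III) are precisely ``$z_j>0$ for all $j$ and $q_0\ge0$'', for which $f(\varphi)=\sum_jz_jc_j^be^{-z_j\varphi}+q_0>0$ everywhere, hence no solution; this also confirms that a solvable N-BVP must fall under case (i), (ii.2), or (iii.2) of Proposition \ref{th1}, as noted before Lemma \ref{funB}. In those cases $f(\alpha)<0$ holds exactly for $\alpha\in(\varphi^*,\infty)$ (saddle, $\varphi_e=\varphi^*$) or for all $\alpha\in\R$ (when $\varphi_e=-\infty$, since $f<0$ everywhere), and on that interval $F$ is strictly decreasing on $[\alpha,\infty)$, so $G(\alpha,\sigma)$ is well defined with $F<F(\alpha)$ on $(\alpha,G(\alpha,\sigma))$ and the orbit through $(\alpha,0)$ is a genuine solution arc, turning upward there since the $u$-component of the vector field is $-f(\alpha)>0$.

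For the endpoint behaviour in (b): as $\alpha$ approaches the equilibrium end of its interval ($\alpha\to\varphi^{*+}$ in the saddle case, $\alpha\to-\infty$ in case (ii.2)), the orbit limits onto the (un)stable manifold and $M(\alpha,\sigma)\to\infty$, by the same logarithmic estimate that gives $\lim T=\infty$ in the proof of Theorem \ref{thequ}, using $f'(\varphi^*)<0$ (resp. $f(-\infty)=0$). At the opposite end, if $f(\varphi)\to-\infty$ as $\varphi\to+\infty$ (every solvable configuration except case (II), including the $\alpha\to+\infty$ end of case (III)), then $G(\alpha,\sigma)-\alpha\to0$ and an estimate of the $\lim T=0$ type gives $M\to0$; if instead $f(\varphi)\to q_0<0$ as $\varphi\to+\infty$ --- which, by (a), happens exactly in case (II) --- then $F(\varphi)\sim q_0\varphi$ forces $G(\alpha,\sigma)-\alpha\to-\sigma^2/(2q_0)$ and hence $M(\alpha,\sigma)\to-\sqrt2\,\sigma/q_0$; symmetrically, in case (III) the end $\alpha\to-\infty$ uses $f(-\infty)=q_0<0$ and again yields $M\to-\sqrt2\,\sigma/q_0$. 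Granting strict monotonicity, the range of $M(\cdot,\sigma)$ is $(0,\infty)$ in every configuration of (I), $(-\sqrt2\,\sigma/q_0,\infty)$ in case (II), and $(0,-\sqrt2\,\sigma/q_0)$ in case (III); since $L=\sqrt2\,M$, solvability amounts to: always in (I); $L>-2\sigma/q_0$, i.e. $2\sigma+q_0L<0$, in (II); and $0<L<-2\sigma/q_0$, i.e. $2\sigma+q_0L>0$, in (III) --- exactly $\Omega_1$ and $\Omega_2$.

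The remaining and main point is the strict monotonicity $\partial_\alpha M(\alpha,\sigma)<0$, which delivers uniqueness and the interval structure at once. This is the analogue of ${\rm d}T/{\rm d}\alpha>0$ in Theorem \ref{thequ} and again rests on the concavity (\ref{concaveF}) of $F$; the new feature is that the upper limit $\varphi_s=G(\alpha,\sigma)$ now varies with $\alpha$. Differentiating $F(\alpha)-F(\varphi_s)=\sigma^2/2$ gives $\partial_\alpha\varphi_s=f(\alpha)/f(\varphi_s)\in(0,1)$ (both $f$-values negative, $|f(\alpha)|<|f(\varphi_s)|$ since $f$ is decreasing), so $\partial_\alpha(\varphi_s-\alpha)<0$. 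After the substitution $\varphi=\alpha+(\varphi_s-\alpha)t$ one differentiates $M$ as in (\ref{Tdot}); compared with the Dirichlet case there is now a nonsingular boundary contribution at $t=1$ (where $F(\alpha)-F(\varphi_s)=\sigma^2/2\neq0$) to be carried along, while the bulk integrand collapses --- via $F''<0$ --- to a manifestly signed expression built from an auxiliary function analogous to $H(x)=2F(x)-(x-\varphi_0)f(x)$, anchored now at $\varphi_s$ rather than $\varphi_0$. I expect this sign analysis --- isolating the $\partial_\alpha\varphi_s$ term and checking it does not spoil the negativity coming from concavity --- to be the only genuinely delicate step; the rest is bookkeeping over the finitely many phase portraits of Proposition \ref{th1}.
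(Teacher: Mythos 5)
Your overall plan coincides with the paper's: reduce the N-BVP to the scalar equation $M(\alpha,\sigma)=L/\sqrt2$, show $M(\cdot,\sigma)$ is strictly decreasing on the natural $\alpha$-interval, and identify its endpoint limits to read off $\Omega_1$, $\Omega_2$, and the unrestricted case. Your preliminary observation that $2\sigma=-\int_{-L/2}^{L/2}f(\varphi)\,{\rm d}x$ forces $f$ to vanish somewhere, thereby eliminating exactly the configuration ``all $z_j>0$ and $q_0\ge0$'', is a clean and explicit way to state what the paper leaves as an implicit reference to Proposition~\ref{th1}; and your endpoint computations (in particular $G(\alpha,\sigma)-\alpha\to-\sigma^2/(2q_0)$ and $M\to-\sqrt2\,\sigma/q_0$ when $f\to q_0<0$) agree with the paper's bounds $-\sqrt2\,\sigma/f(G)<M<-\sqrt2\,\sigma/f(\alpha)$.

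The gap is in the step you yourself flag as the ``only genuinely delicate'' one: you do not actually establish $\partial_\alpha M<0$, you only predict that a sign analysis modeled on the Dirichlet-case auxiliary function $H(x)=2F(x)-(x-\varphi_0)f(x)$ should go through after the rescaling $\varphi=\alpha+(\varphi_s-\alpha)t$. That route is not only incomplete but also needlessly complicated. The paper's proof uses the plain \emph{shift} $t=\varphi-\alpha$, which keeps the variable upper limit $G(\alpha,\sigma)-\alpha$ in the $t$-integral:
\begin{align*}
M=\int_{0}^{G(\alpha,\sigma)-\alpha}\frac{{\rm d}t}{\sqrt{F(\alpha)-F(\alpha+t)}},\qquad
\frac{{\rm d}M}{{\rm d}\alpha}
=\frac{\partial_\alpha G-1}{\sqrt{F(\alpha)-F(G)}}
+\frac12\int_0^{G-\alpha}\frac{f(\alpha+t)-f(\alpha)}{\bigl(F(\alpha)-F(\alpha+t)\bigr)^{3/2}}\,{\rm d}t .
\end{align*}
Both terms are negative with no further work: the boundary term because $\partial_\alpha G=f(\alpha)/f(G)\in(0,1)$ (which you already derived), and the integrand because $f$ is strictly decreasing, so $f(\alpha+t)<f(\alpha)$ for $t>0$. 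No $H$-type function is needed, and the boundary term is harmless precisely because $F(\alpha)-F(G)=\sigma^2/2>0$ keeps it finite. One smaller imprecision: the ``logarithmic estimate'' for $M\to\infty$ at the equilibrium end applies only to the finite saddle; in the equilibrium-at-infinity situation ($\varphi_e=-\infty$) the divergence is driven by $-f(\alpha)\to0$, giving a square-root rate as in $2\sqrt{G-\alpha}/\sqrt{-f(G)}\to\infty$, not a logarithm. These are fixable, but as written the monotonicity --- the heart of the uniqueness claim --- is asserted, not proved.
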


\begin{proof}
 It is noted that case (I) indicates the presence of an equilibrium point $\varphi_e$  and the existence of an anion in the electrolyte. For case (II), it is stated that an equilibrium point exists, but there is an absence of an anion in the electrolyte. Lastly, case (III) asserts that there is no equilibrium point of system (\ref{Hpb}).

Owing to the Implicit Function Theorem and Mean Value Theorem, we deduce that the function $G(\alpha,\sigma)$ satisfies
 $$
\frac{\partial G}{\partial \alpha}=\frac{f(\alpha)}{f(G)}\in(0,1),~~\frac{\partial G}{\partial \sigma}=-\frac{\sigma}{f(G)}>0,~\lim_{\sigma\to\infty}G=\lim_{\alpha\to\infty}G=\infty,
$$
and
\begin{align}\label{B}
0<-\frac{\sigma^2}{2f(G)}<G-\alpha<-\frac{\sigma^2}{2f(\alpha)}.
\end{align}
 We fix $\sigma$  and regard $M=M(\alpha)$ as a function in $\alpha$. Then, taking into account
$$
M=\int_{0}^{G(\alpha,\sigma)-\alpha}\frac{{\rm d}t}{\sqrt{F(\alpha)-F(\alpha+t)}},~~t=\varphi-\alpha,~~\frac{\partial G}{\partial\alpha}<1,
$$
 and $f(\varphi)<0$ is decreasing in $\varphi\in(\alpha,\infty)$,  we have
 \begin{align*}
\frac{{\rm d} M}{{\rm d}\alpha}=&\frac{1}{\sqrt{F(\alpha)-F(G)}}\left(\frac{\partial G}{\partial\alpha}-1\right)+\frac{1}{2}
\int_0^{G(\alpha,\sigma)-\alpha}{\frac{f(\alpha+t)-f(\alpha)}{(F(\alpha)-F(\alpha+t))^{3/2}}}{\rm d}t<0,
\end{align*}
that is, $M$ is decreasing in $\alpha$. Thanks to the monotonicity of $f(\varphi)$ and (\ref{B}), one gets
\begin{align}\label{Mgu}
-\frac{\sqrt{2}\sigma}{f(G)}<\frac{2\sqrt{G-\alpha}}{\sqrt{-f(G)}}<M(\alpha)<\frac{2\sqrt{G-\alpha}}{\sqrt{-f(\alpha)}}<-\frac{\sqrt{2}\sigma}{f(\alpha)}.
\end{align}
Hence, in each case of (I), (II) and (III), we see that as $\alpha\to \infty$, $M$ will approaches, respectively,
$$
0,\; -\frac{\sqrt{2}\sigma}{q_0},\; 0.
$$
Similarly, we also have $\lim_{\alpha\to \varphi_e}{M}=\infty$ in cases (I) and (II) and $\lim_{\alpha\to -\infty}M=-{\sqrt{2}\sigma}/{q_0}$ in case (III). See Figure \ref{Ma} for a sketch of the function $M(\alpha)$ in each case. The existence and uniqueness of $\alpha=\alpha(\sigma,L)$ then follow. This completes the proof.
\end{proof}

Parts of our aims in this work are interested in how $\sigma$ and $L$ affect several physical quantities. To this end, we need more information on the relation between $\sigma,~L$ and $\alpha$ defined by (\ref{re2}), which plays a
crucial role in Section 3.

\begin{prop}\label{tha}
The function  $\alpha=\alpha(\sigma,L)\in\mathbb{R}$ defined by (\ref{re2})  is increasing in $\sigma$ and is decreasing in $L$. Furthermore, in each case of Theorem \ref{thN}, one has
\begin{align*}
&
\lim_{L\to 0}\alpha=\infty,\;\lim_{\sigma\to \infty}\alpha=\alpha^L,\;
\lim_{L\to \infty}\alpha=\lim_{\sigma\to 0}\alpha=\varphi_e\;\mbox{ for case (I)},\\
&
\lim_{L\to -\frac{2\sigma}{q_0}}\alpha=\lim_{\sigma\to -\frac{q_0L}{2}}\alpha=\infty,\quad\lim_{L\to \infty}\alpha=\lim_{\sigma\to 0}\alpha=\varphi^*\; \mbox{ for case (II)},\\
&
\lim_{L\to 0}\alpha=\infty,\; \lim_{\sigma\to \infty}\alpha=\alpha^L,\;
\lim_{L\to -\frac{2\sigma}{q_0}}\alpha=\lim_{\sigma\to -\frac{q_0L}{2}}\alpha=-\infty\; \mbox{ for case (III)},
\end{align*}
where $\alpha^L$ is  uniquely determined by

 \begin{align}\label{alpha}
\frac{L}{\sqrt{2}}=\int_{\alpha^L}^{\infty}\frac{{\rm d}\varphi}{\sqrt{F(\alpha^L)-F(\varphi)}}.
\end{align}
\end{prop}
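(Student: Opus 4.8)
The plan is to treat the identity $M(\alpha,\sigma)=L/\sqrt2$ from (\ref{re2}) as an implicit definition of $\alpha$ and to leverage the quantitative information on $M$ and on $G(\alpha,\sigma)$ already assembled in the proof of Theorem \ref{thN}. For monotonicity, recall from that proof that $M$ is $C^1$ with $\partial M/\partial\alpha<0$, so the Implicit Function Theorem applies. Differentiating $M(\alpha(\sigma,L),\sigma)=L/\sqrt2$ in $L$ gives $\partial\alpha/\partial L=\big(\sqrt2\,\partial_\alpha M\big)^{-1}<0$. For $\sigma$, only the upper endpoint $G=G(\alpha,\sigma)$ of the integral defining $M$ depends on $\sigma$, and using $F(\alpha)-F(G)=\sigma^2/2$ from (\ref{re1}) together with $\partial_\sigma G=-\sigma/f(G)$,
\[
\frac{\partial M}{\partial\sigma}=\frac{\partial_\sigma G}{\sqrt{F(\alpha)-F(G)}}=\frac{-\sqrt2}{f(G)}>0 ,
\]
since $f(G)<0$ (for $\sigma>0$ the relevant orbit of the N-BVP lies where $u'=-f(\varphi)\ge 0$, hence $f<0$ on $[\alpha,G]$; in case (III), $f<0$ everywhere). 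Therefore $\partial\alpha/\partial\sigma=-\partial_\sigma M/\partial_\alpha M>0$.

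Next, many of the limits reduce to the monotone--bijection structure already extracted in Theorem \ref{thN}: for fixed $\sigma$, $M(\cdot,\sigma)$ is a strictly decreasing bijection from the $\alpha$-range onto $(0,\infty)$ in case (I), onto $(-\sqrt2\sigma/q_0,\infty)$ in case (II), and onto $(0,-\sqrt2\sigma/q_0)$ in case (III). Writing $\alpha=M(\cdot,\sigma)^{-1}(L/\sqrt2)$, every limit in which $L/\sqrt2$ approaches an endpoint of the range forces $\alpha$ to the corresponding endpoint $\infty$, $\varphi_e$, or $-\infty$ of the $\alpha$-range; this yields all the $L$-limits as well as $\lim_{\sigma\to-q_0L/2}\alpha=\infty$ in case (II) and $=-\infty$ in case (III) (there $-\sqrt2\sigma/q_0\to L/\sqrt2$). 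For $\sigma\to0$ in cases (I) and (II) I will instead use the upper bound in (\ref{Mgu}): from $L/\sqrt2=M(\alpha,\sigma)<-\sqrt2\,\sigma/f(\alpha)$ we get $0<-f(\alpha)<2\sigma/L\to0$, and since $f$ is continuous, strictly decreasing, and $f(\varphi_e)=0$, this forces $\alpha\to\varphi_e$.

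The remaining case is $\sigma\to\infty$ in cases (I) and (III). Introduce the limiting time-map $\widetilde M(\alpha):=\int_{\alpha}^{\infty}\big(F(\alpha)-F(\varphi)\big)^{-1/2}\,{\rm d}\varphi$. In cases (I) and (III) one has $F(\varphi)\to-\infty$ exponentially fast as $\varphi\to\infty$, so this improper integral converges, and moreover $\widetilde M(\alpha)-M(\alpha,\sigma)=\int_{G(\alpha,\sigma)}^{\infty}(\cdots)\,{\rm d}\varphi\to0$ uniformly for $\alpha$ in compact sets as $\sigma\to\infty$, because $G(\alpha,\sigma)\to\infty$. By the same concavity argument used for the time-map in Theorem \ref{thequ}, $\widetilde M$ is strictly decreasing with $\widetilde M(\alpha)\to0$ as $\alpha\to\infty$ and $\widetilde M(\alpha)\to\infty$ as $\alpha\to\varphi_e$, so there is a unique $\alpha^L$ solving $\widetilde M(\alpha^L)=L/\sqrt2$ --- precisely (\ref{alpha}). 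Since $M(\alpha,\sigma)<\widetilde M(\alpha)$ we obtain $\alpha(\sigma,L)<\alpha^L$, so by the monotonicity already proved $\alpha(\sigma,L)$ increases to some $\alpha_\infty\le\alpha^L$ as $\sigma\to\infty$; passing to the limit in $L/\sqrt2=M(\alpha(\sigma,L),\sigma)$ using the uniform tail estimate and the continuity of $\widetilde M$ gives $\widetilde M(\alpha_\infty)=L/\sqrt2$, hence $\alpha_\infty=\alpha^L$.

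The main obstacle is the analysis of $\widetilde M$: one must check that the improper integral converges at $+\infty$ (this genuinely uses the exponential growth of $-F$ and does not follow from the algebraic estimates that sufficed for $M$ on bounded $\varphi$-intervals), that $\widetilde M$ inherits the strict monotonicity and boundary behaviour of the finite time-maps, and --- most importantly --- that the tail $\int_{G(\alpha,\sigma)}^{\infty}$ vanishes uniformly in $\alpha$ on compact sets, which is exactly what licenses interchanging the limit $\sigma\to\infty$ with the integration. Everything else is bookkeeping with the monotone-bijection structure already established in Theorems \ref{thequ} and \ref{thN}.
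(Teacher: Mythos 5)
Your argument is correct and lands the same result, but it takes a somewhat cleaner route than the paper in two places. For monotonicity in $\sigma$, the paper argues by contradiction (supposing $\sigma_1>\sigma_2$, $\alpha_1\le\alpha_2$ with $M(\alpha_1,\sigma_1)=M(\alpha_2,\sigma_2)$, then using $\partial_\alpha M<0$ and $\partial_\sigma G>0$ to derive a contradiction), whereas you compute $\partial_\sigma M=-\sqrt2/f(G)>0$ outright and invoke the Implicit Function Theorem; the two are equivalent, but your version makes the sign structure explicit and is shorter. For $\sigma\to\infty$, the paper argues by contradiction: assuming $\alpha\to\infty$, it introduces $I(\alpha)=\int_\alpha^\infty(F(\alpha)-F(\varphi))^{-1/2}\,{\rm d}\varphi$, shows $L/\sqrt2<I(\alpha)$ and $I(\alpha)\to0$ as $\alpha\to\infty$, and concludes the limit $\alpha^L$ is finite, but it does not explicitly verify that this finite limit actually satisfies \eqref{alpha}. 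You instead construct the limiting time-map $\widetilde M$ directly, use $M<\widetilde M$ to bound $\alpha(\sigma,L)<\alpha^L$, and pass to the limit with a uniform-tail estimate; this is a bit more work (the uniform vanishing of $\int_{G(\alpha,\sigma)}^\infty$ must be checked, which you rightly flag) but it actually closes the identification $\widetilde M(\alpha_\infty)=L/\sqrt2=\widetilde M(\alpha^L)$ that the paper takes for granted.

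One small imprecision to fix: you write ``$\widetilde M(\alpha)\to\infty$ as $\alpha\to\varphi_e$'' as if it covered both cases (I) and (III), but in case (III) there is no equilibrium and the relevant end of the domain is $\alpha\to-\infty$. The needed blow-up is still easy: for any fixed $K$, since $f$ is decreasing and bounded on $(-\infty,K]$, one has $F(\alpha)-F(\varphi)\le -f(K)(\varphi-\alpha)$ for $\alpha<\varphi<K$, hence $\widetilde M(\alpha)\ge 2\sqrt{K-\alpha}/\sqrt{-f(K)}\to\infty$ as $\alpha\to-\infty$. With that observation inserted, your proof is complete and in fact slightly more detailed than the paper's.
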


\begin{proof}
The monotonicity of $\alpha=\alpha(\sigma,L)$ with respect to $L$ directly follows from the graph of $M(\alpha)$.
Suppose $\alpha$ is not increasing in $\sigma$. Then there exist $\sigma_1>\sigma_2,~\alpha_1\leq\alpha_2$ satisfying
$$
M(\alpha_1,\sigma_1)=M(\alpha_2,\sigma_2)=\frac{L}{\sqrt{2}}.
$$
By $\frac{\partial M}{\partial \alpha}<0$, we get $M(\alpha_1,\sigma_1)\geq M(\alpha_2,\sigma_1)$ which leads to $M(\alpha_2,\sigma_2)\geq M(\alpha_2,\sigma_1)$, that is,
$$
  \int_{\alpha_2}^{G(\alpha_2,\sigma_2)}\frac{{\rm d}\varphi}{\sqrt{F(\alpha_2)-F(\varphi)}}\geq\int_{\alpha_2}^{G(\alpha_2,\sigma_1)}\frac{{\rm d}\varphi}{\sqrt{F(\alpha_2)-F(\varphi)}}.
$$
However, by $\frac{\partial G}{\partial \sigma}>0$, we have $G(\alpha_2,\sigma_2)<G(\alpha_2,\sigma_1)$ which yields a contradiction. Hence, $\alpha$ is increasing in $\sigma$.

We first prove that if $\min_{1\leq j\leq N}\{z_j\}<0$ (corresponding to cases (I) and (III)), then $\lim_{\sigma\to \infty}\alpha$ is convergent to a finite number $\alpha^L$. Otherwise one has $\lim_{\sigma\to \infty}\alpha=\infty$ due to the monotonicity of $\alpha$ with respect to $\sigma$. On the one hand, we have
 \begin{align*}
\frac{L}{\sqrt{2}}<I(\alpha):=\int_{\alpha}^{\infty}\frac{{\rm d}\varphi}{\sqrt{F(\alpha)-F(\varphi)}}.
\end{align*}
Suppose, for some $1\le l\le N$, $z_l=\min_{1\leq j\leq N}\{z_j\}<0$. The integral $I(\alpha)$ is convergent since
\[F(\alpha)-F(\varphi)\sim c_l^be^{-z_l\varphi},~~\textit{as}~~\varphi\to \infty.\]
We only need to show $\lim_{\alpha\to \infty}I(\alpha)=0$, which contradicts with $\lim_{\sigma\to \infty}\alpha=\infty$.
Indeed, by
$$
I_1(\alpha):=\int_{\alpha}^{\alpha+1}\frac{{\rm d}\varphi}{\sqrt{F(\alpha)-F(\varphi)}}<\int_{\alpha}^{\alpha+1}\frac{{\rm d}\varphi}{\sqrt{f(\alpha)(\alpha-\varphi)}}=\frac{2}{\sqrt{-f(\alpha)}},
$$
one has $\lim_{\alpha\to \infty}I_1(\alpha)=0$.
Denote by $\theta=1-e^{z_l/2}\in(0,1)$. Combing with the fact that $F(\varphi)$ is decreasing in $\varphi\in(\alpha,\infty)$ and $\lim_{\alpha\to \infty}F(\alpha+1)/F(\alpha)=e^{-z_l}$, we see that if $\varphi>\alpha+1$ and $\alpha$ is sufficiently large then $F(\varphi)$ is negative and
\begin{align*}
F(\alpha)-F(\varphi)&=-\theta F(\varphi)+\left((1-\theta)\frac{F(\varphi)}{F(\alpha)}-1\right)(-F(\alpha)),\\
                 &\geq -\theta F(\varphi)+\left((1-\theta)\frac{F(\alpha+1)}{F(\alpha)}-1\right)(-F(\alpha)),\\
                 &\geq-\theta F(\varphi),
\end{align*}
which yields
$$
I_2(\alpha):=\int_{\alpha+1}^{\infty}\frac{{\rm d}\varphi}{\sqrt{F(\alpha)-F(\varphi)}}<\int_{\alpha+1}^{\infty}\frac{{\rm d}\varphi}{\sqrt{-\theta F(\varphi)}},
$$
and $\lim_{\alpha\to \infty}I_2(\alpha)=0$.  Therefore we obtain
$$
\lim_{\alpha\to \infty}I(\alpha)=\lim_{\alpha\to \infty}(I_1(\alpha)+I_2(\alpha))=0,
$$
 which is a contradiction. So one has  $\lim_{\sigma\to \infty}\alpha=\alpha^L$ for some constant $\alpha^L$.

The remaining limits can be derived easily from properties of $M(\alpha)$ and (\ref{Mgu}).
\end{proof}

\begin{rem}
Our approach can be employed to investigate the problem of two parallel plates with opposite charges, namely N-BVP (\ref{PB}) and (\ref{bcc2}) satisfying $\sigma_1\sigma_2>0$, although it is not the main focus of this paper. Without loss of generality, providing that $0<\sigma_1<\sigma_2$, one can use similar arguments and prove that this N-BVP has a unique solution if and only if one of the following conditions holds:

\begin{itemize}
\item[{\rm(I)}] either $z_iz_j<0$ for some $i,j\in\{1,\cdots,N\}$ or $z_i<0$ for all $i\in\{1,\cdots,N\}$ and $q_0\geq0$;


\item[{\rm(II)}] $z_i>0$ for all $i\in\{1,\cdots,N\}$, $q_0<0$ and
$
 L>-\frac{\sigma_2-\sigma_1}{q_0};
$

\item[{\rm(III)}] $z_i<0$ for all $i\in\{1,\cdots,N\}$, $q_0<0$ and
$
L<-\frac{\sigma_2-\sigma_1}{q_0}.
$
\end{itemize}

\end{rem}

\section{Critical features and their mechanisms}\label{CF}
\setcounter{equation}{0}
Major findings in this work will be discussed in this section, which are new critical features of some important physical
quantities and the significance of  whether or not an equilibrium is presented in the PB equation.
The findings rely on a detailed analysis of the BVPs with attention on physical implications. More precisely, we will focus on mechanism of GC layer in \S\ref{layer}, saturation of surface charge density in $L$ in \S\ref{secSCD},  critical property of electric pressure in  \S\ref{sEP},  and critical length  for the switching of electric force direction in \S\ref{critL}. Some of these features are closely related.


\subsection{Mechanism of  GC layer: Presence of equilibrium.}\label{layer}

The Gouy-Chapman layer is an important phenomenon of  electrostatic forces in charged membrane systems in the electrochemistry theory. It quantified the striking double-layer feature near the surface-charge of electrolytes. This was first observed by Gouy and Chapman independently based on explicit solutions of the PB equation in a special setting (\cite{Cha13, Gouy10}). The Gouy-Chapman theory has been further modified and tested and applied in real systems. Nevertheless, mechanisms of
 Gouy-Chapman layer were not well-studied or well-exposed  to the best knowledge of the authors.

The Gouy-Chapman layer initially stems from the investigation of the one plate problem, that is, the plate located at $x=0$ immersed in an infinitely long electrolyte
with left boundary being either surface charge (Section \ref{GCN}) or surface potential (Section
\ref{GCD}) and right boundary being zero electric field intensity.
Then one has the PB equation with the  boundary condition
\begin{align}
&\mbox{Neumann: }\quad  \frac{{\rm d}\varphi}{{\rm d}x}(0)=-\sigma,~~\frac{{\rm d}\varphi}{{\rm d}x}(\infty)=0.\label{bv1}
 \end{align}

 The dependence of the surface potential $\varphi(0)=\varphi_s$ on the  surface charge density $\sigma$ was the original interest of, independently, Gouy and Chapman on this problem and, the dependence is often called Grahame's equation \cite{Is1992}. However, the arguably most important discovery by Gouy and Chapman is the so-called {\em Gouy-Chapman (GC) layer:} There is a finite number $\delta_{GC}>0$, {\em the Gouy-Chapman length,} such that one-half of the total charge in electrolyte are accumulated in the narrow
region next to the left boundary within $\delta_{GC}$ distance.

 What is the mechanism behind the GC layer? To the best knowledge of the authors, there is no investigation on this question. By chance, in our study of PB theory, we found the hidden mechanism of the GC layer is the presence of equilibrium (finite or infinite) in PB equation, although the equilibrium is located far away from the GC layer.

 To present this mechanism of the GC layer, we will treat the problem of PB with boundary conditions (\ref{bv1})
 as $L\to\infty$ of PB with Neumann conditions
  \begin{align}
  \frac{{\rm d}\varphi}{{\rm d}x}(0)=-\sigma\;\mbox{ and }\; \frac{{\rm d}\varphi}{{\rm d}x}\big(\frac{L}{2}\big)=0.\label{bv11S}
 \end{align}
 By relaxing the original setting of Gouy and Chapman to the setting in this paper, in particular allowing a fixed charge,  we find that {\em there is a GC layer if and only if the PB system has an equilibrium, a finite equilibrium or an equilibrium-at-infinity.} In fact, the finding follows easily from a simple geometric observation of the phase plane portrait.

 Note that, by the translation $x\to x-L/2$, PB with (\ref{bv11S}) is converted to PB with
  \begin{align}
  \frac{{\rm d}\varphi}{{\rm d}x}(-\frac{L}{2})=-\sigma\;\mbox{ and }\; \frac{{\rm d}\varphi}{{\rm d}x}(0)=0.\label{bv11}
 \end{align}

\subsubsection{GC layer with  symmetric Neumann boundary conditions.}\label{GCN}

Consider PB with  symmetric Neumann boundary conditions (\ref{nbc0}) (equivalent to (\ref{bv11})). The total (net) charge between two plates is
\begin{align*}
Q_{total}=&\int_{-\frac{L}{2}}^{\frac{L}{2}}\Big(q_0+\sum_{j=1}^Nz_jc_j(x)\Big){\rm d}x=\int_{-\frac{L}{2}}^{\frac{L}{2}}\Big(q_0+\sum_{j=1}^Nz_jc_j^be^{-z_j\varphi(x)}\Big){\rm d}x\\
=&-\int_{-\frac{L}{2}}^{\frac{L}{2}} \varphi''(x){\rm d}x
=\varphi'\Big(-\frac{L}{2}\Big)-\varphi'\Big(\frac{L}{2}\Big)=-2\sigma.
\end{align*}
Let $\delta_{N}$ be the width of the ``layer'' next to the left (or right) plate over which $1/4$ of the ions are accumulated. (The subscript $N$ in $\delta_N$ indicates that the quantity is associated with Neumann boundary conditions and we will later use $\delta_D$ for that associated with Dirichlet boundary conditions.)

Note that the choice of $1/4$ results in the accumulation of charges at the left and right layers amounting to $1/2$ of the total. Otherwise, the choice is entirely arbitrary. In other words, any number $\rho\in (0, 1/2)$ can be opted for instead of $1/4$ to achieve the identical characteristic of multiple length scales/layers.

The total  charge over the union
\[\left(-\frac{L}{2}, -\frac{L}{2}+\delta_{N}\right)\cup \left(\frac{L}{2}-\delta_{N},\frac{L}{2}\right)\]
 is thus one-half of the total charge, and then
\[\frac{1}{4}Q_{total}=\int_{-L/2}^{-L/2+\delta_N}\Big(q_0+\sum_{j=1}^Nz_jc_j^be^{-z_j\varphi(x)}\Big){\rm d}x=\int_{L/2-\delta_N}^{L/2}\Big(q_0+\sum_{j=1}^Nz_jc_j^be^{-z_j\varphi(x)}\Big){\rm d}x,\]
or equivalently,
\[\varphi'(-\frac{L}{2}+\delta_N)=-\varphi'(\frac{L}{2}-\delta_N)=-\frac{\sigma}{2}.\]
Taking integrations for $x\in (-L/2,0)$ and $x\in (-L/2,-L/2+\delta_N)$ from
\[\sqrt{2}{\rm d}x=\frac{{\rm d}\varphi}{\sqrt{F(\alpha)-F(\varphi)}}\]
and making use of the time-map (\ref{re2}) 
to get,
\begin{align}
\frac{L}{\sqrt{2}}=\int_{\alpha}^{G(\alpha,\sigma)}\frac{{\rm d}\varphi}{\sqrt{F(\alpha)-F(\varphi)}},\label{gc10}\\
{\sqrt{2}}{\delta_N}=\int_{G(\alpha,\sigma/2)}^{G(\alpha,\sigma)}\frac{{\rm d}\varphi}{\sqrt{F(\alpha)-F(\varphi)}},\label{gc20}
\end{align}
where the function $G(\cdot,\cdot)$ is defined in Lemma \ref{funB}.

\begin{thm}\label{thgc}
Fix $\sigma>0$. The function $\delta_N=\delta_N(L)$, defined by (\ref{gc20}), satisfies
\begin{itemize}
\item[{\rm(A)}]  in the presence of  equilibrium,
\begin{align}\label{deltaN}
\lim_{L\to \infty}\delta_N(L)=\delta_N^{\infty}:=\frac{1}{\sqrt{2}}\int_{F^{-1}(F_e-\frac{\sigma^2}{8})}^{F^{-1}(F_e-\frac{\sigma^2}{2})}\frac{{\rm d}\varphi}{\sqrt{F_e-F(\varphi)}}<\infty,
\end{align}
where $F^{-1}(\cdot)$ is the inverse  of $F(\cdot)$ on the monotonic decreasing interval;

\item[{\rm(B)}] in the absence of equilibrium,
\begin{align}\label{deltaN2}
\lim_{L\to L_c}\delta_N(L)=\frac{L_c}{4}~~\mbox{where}~~L_c=-\frac{2\sigma}{q_0}>0.
\end{align}
\end{itemize}
\end{thm}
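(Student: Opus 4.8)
The plan is to analyze the limit of $\delta_N(L)$ through the two implicit relations \eqref{gc10} and \eqref{gc20}, using Proposition \ref{tha} to track the behaviour of $\alpha=\alpha(\sigma,L)$ and the properties of $G(\alpha,\cdot)$ from the proof of Theorem \ref{thN}. First I would observe that \eqref{gc20} expresses $\delta_N$ purely in terms of $\alpha$ and the fixed $\sigma$, namely $\sqrt{2}\,\delta_N(L)=\int_{G(\alpha,\sigma/2)}^{G(\alpha,\sigma)}\bigl(F(\alpha)-F(\varphi)\bigr)^{-1/2}\,{\rm d}\varphi$, so everything reduces to understanding this integral as $\alpha$ runs to its limiting value. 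By Proposition \ref{tha}, $\alpha$ is decreasing in $L$ and, in the presence of an equilibrium (cases (I) and (II)), $\lim_{L\to\infty}\alpha=\varphi_e$; in the absence of an equilibrium (case (III)) the admissible range of $L$ is bounded by $L_c=-2\sigma/q_0$ and $\lim_{L\to L_c}\alpha=-\infty$. So the two assertions (A) and (B) correspond exactly to these two limiting regimes for $\alpha$.

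For part (A): as $\alpha\to\varphi_e$ we have $F(\alpha)\to F_e$, and I would argue that $G(\alpha,s)\to F^{-1}\!\bigl(F_e-\tfrac{s^2}{2}\bigr)$ for each fixed $s$ — this follows from the defining relation $\tfrac{s^2}{2}+F(G(\alpha,s))=F(\alpha)$ together with the monotonicity of $F$ on the relevant branch (in the equilibrium-at-infinity case one must check that $G$ stays finite because $\varphi\mapsto F(\varphi)$ still decreases through the value $F_e-s^2/2=-s^2/2$). Then I would pass to the limit inside the integral in \eqref{gc20}. The integrand has an integrable singularity only at the lower endpoint $\varphi=G(\alpha,\sigma/2)$ if that endpoint coincides with $\alpha$, but it does not — since $\sigma/2>0$ forces $G(\alpha,\sigma/2)>\alpha$ strictly by \eqref{B} — so for $\alpha$ near $\varphi_e$ the integrand is uniformly bounded on the (shrinking or converging) interval of integration, and dominated convergence gives \eqref{deltaN} directly, with finiteness automatic. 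I would also note $F_e-\tfrac{\sigma^2}{8}>F_e-\tfrac{\sigma^2}{2}$ so the two endpoints in \eqref{deltaN} are correctly ordered on the decreasing branch of $F$.

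For part (B), case (III): here $q_0<0$, all $z_j<0$, $f(\varphi)\to q_0$ as $\varphi\to-\infty$, and $\alpha\to-\infty$ as $L\to L_c$. The clean way is to use the total-charge identity already in the text: $\varphi'(-L/2+\delta_N)=-\sigma/2$ while $\varphi'(-L/2)=-\sigma$, and more generally the electric field decreases (Case (iii.2)/(iii)) roughly linearly because $\varphi''=-f(\varphi)\to -q_0>0$ uniformly as $\alpha\to-\infty$. Concretely, from $\tfrac12 u^2+F(\varphi)=F(\alpha)$ and $F(\varphi)-F(\alpha)=\int_\alpha^\varphi f\sim q_0(\varphi-\alpha)$ when both arguments are very negative, one gets $u(x)^2\approx -2q_0(\varphi(x)-\alpha)$ and hence $u$ essentially affine in $x$ with slope $-q_0$; then $u=-\sigma$ at $x=-L/2$ and $u=0$ at $x=0$ forces $L\to -2\sigma/q_0=L_c$, and $u=-\sigma/2$ at $x=-L/2+\delta_N$ forces $\delta_N\to L_c/4$. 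Rigorously I would make the estimate $|f(\varphi)-q_0|\le \varepsilon$ for $\varphi\le -M_\varepsilon$ in the integrals \eqref{gc10}–\eqref{gc20}, sandwich both $L/\sqrt2$ and $\sqrt2\,\delta_N$ between the corresponding quantities for $f\equiv q_0\pm\varepsilon$, and let $\varepsilon\to0$.

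The main obstacle I anticipate is the uniformity of the passage to the limit in the integrals — in part (A) one must rule out that the interval $[G(\alpha,\sigma/2),G(\alpha,\sigma)]$ degenerates or drifts toward the singularity $\varphi=\alpha$ as $\alpha\to\varphi_e$ (it does not, because of the strict lower bound $G(\alpha,\sigma/2)-\alpha\ge -\tfrac{\sigma^2}{8f(\alpha)}>0$ from \eqref{B}, which stays bounded away from $0$ since $f(\alpha)$ is bounded near $\varphi_e$), and in part (B) one must control the integrands uniformly over the unbounded region $\varphi\le\alpha$ as $\alpha\to-\infty$, which the estimate $f(\varphi)=q_0+o(1)$ handles cleanly. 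Once these uniformity points are settled, both limits follow by elementary estimates and the monotone/dominated convergence theorem.
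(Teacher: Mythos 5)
Your proposal is correct and follows essentially the same route as the paper: both parts reduce to tracking the limit of $\alpha=\alpha(\sigma,L)$ via Proposition \ref{tha} ($\alpha\to\varphi_e$ as $L\to\infty$ for part (A), $\alpha\to-\infty$ as $L\to L_c$ for part (B)) and then passing to the limit in the integral \eqref{gc20} using the convergence of $G(\alpha,\sigma)$, $G(\alpha,\sigma/2)$ and, for (B), the asymptotic $f(\varphi)\to q_0$ together with the bound \eqref{B}. Your $\varepsilon$-sandwich in (B) is a slight rephrasing of the paper's explicit two-sided estimate on $\bigl(\sqrt{G(\alpha,\sigma)-\alpha}-\sqrt{G(\alpha,\sigma/2)-\alpha}\bigr)/\delta_N$ in terms of $\sqrt{-f}$, and your extra checks in (A) (that the interval $[G(\alpha,\sigma/2),G(\alpha,\sigma)]$ stays bounded away from the singularity at $\varphi=\alpha$ because $F(\alpha)-F(\varphi)\ge\sigma^2/8$ there, and that $G(\alpha,s)$ remains finite even as $\alpha\to-\infty$ in the equilibrium-at-infinity case) supply precisely the details the paper leaves implicit.
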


\begin{proof}
{\rm (A)}
By utilizing Proposition \ref{tha} and letting $L$ tend to infinity, it follows that $\alpha$ approaches $\varphi_e$. Consequently, we observe that $\delta_N$ tends to $\delta_{N}^{\infty}$ as $L$ tends to infinity.

{\rm (B)}  By Proposition \ref{tha}, we know that $L$ has  an upper bound $L_c$ and  cannot tend to infinity   in the non-equilibrium case.
Furthermore, taking (\ref{gc10}) and Theorem \ref{tha} into account, one has $\alpha\to-\infty$ as $L\to L_c$.
Clearly, one also has
\begin{align*}
\frac{\sqrt{-f(G(\alpha,\sigma/2))}}{\sqrt{2}}<\frac{\sqrt{G(\alpha,\sigma)-\alpha}-\sqrt{G(\alpha,\sigma/2)-\alpha}}{\delta_N}<\frac{\sqrt{-f(G(\alpha,\sigma))}}{\sqrt{2}}.
\end{align*}
It follows from (\ref{B}) that
$$
\lim_{\alpha\to-\infty}\big(G(\alpha,\sigma)-\alpha\big)=-\frac{\sigma^2}{2q_0}\;\mbox{ and }\;\lim_{\alpha\to-\infty}\big(G(\alpha,\sigma/2)-\alpha\big)=-\frac{\sigma^2}{8q_0}.
$$
The proof can then be easily completed.
\end{proof}
\begin{rem}\label{ree}
The  quantity $\delta_N^{\infty}$ depends on the surface charge $\sigma$.  As indicated by (\ref{deltaN}), it is evident that $\delta_N^{\infty}$ approaches zero as  $\sigma$
goes to infinity.

\end{rem}
Theorem \ref{thgc} states that when the amount of charges between two plates remains the same,
as the separation distance $L$ increases, the  length scale of $\delta_N=\delta_N(L)$  depends critically on
whether or not PB equation has an equilibrium. More precisely, if PB admits an equilibrium (saddle or equilibrium-at-infinity), then
\[\frac{\delta_N(L)}{L}\to 0\;\mbox{ as }\; L\to\infty;\]
that is,   $\delta_N(L)$ and $L$ have {\em different} (length) scales. That is the reason for the terminology of ``layer''.
 On the other hand,  if there is no equilibrium, then the BVP has a classical solution if and only if $L\leq  L_c$, and
 \[
 \frac{\delta_N(L)}{L}\to \frac{1}{4}\;\mbox{ as } L\to L_c.
 \]
For fixed $\sigma$, one cannot take $L\to\infty$ in the absence of equilibrium case. In the above restricted sense, we conclude that \emph{there is no GC layer if the PB equation has no equilibrium}. This mechanism for GC layer is more direct for Dirichlet boundary conditions to be discussed next, where one can take $L\to\infty$ also in the absence of equilibrium case.

\subsubsection{GC layer with  symmetric Dirichlet boundary conditions.}\label{GCD}
If one keeps the value of surface potential, that is,
 \begin{align}\label{bcs}
\varphi\left(-\frac{L}{2}\right)=\varphi\left(\frac{L}{2}\right)=\varphi_s,
\end{align}
and increases the separation length $L$, one may take another look at the crucial effect of equilibrium on the GC layer.
Similar to the Neumann boundary condition case discussed in Section \ref{GCN}, let $\delta_{D}$ be the width of the ``layer'' next to the left (or right) plate over which $1/4$ of the ions are accumulated; that is,
$$
\frac{1}{4}Q_{total}=\int_{-L/2}^{-L/2+\delta_D}(q_0+\sum_{i=1}^Nz_ic_i^be^{-z_i\varphi(x)}){\rm d}x=\int_{L/2-\delta_D}^{L/2}(q_0+\sum_{i=1}^Nz_ic_i^be^{-z_i\varphi(x)}){\rm d}x.
$$
Using the time-map (\ref{re2}) again, one has
\begin{align}
\frac{L}{\sqrt{2}}&=\int_{\alpha}^{\varphi_s}\frac{{\rm d}\varphi}{\sqrt{F(\alpha)-F(\varphi)}},\quad
{\sqrt{2}}{\delta_D}=\int_{F^{-1}(\frac{F(\varphi_s)+3F(\alpha)}{4})}^{\varphi_s}\frac{{\rm d}\varphi}{\sqrt{F(\alpha)-F(\varphi)}}.\label{gc22}
\end{align}
\begin{thm}\label{thgc2}
Fix  $\varphi_s$. The function $\delta_D=\delta_D(L)$, defined by (\ref{gc22}), satisfies

\begin{itemize}
\item[{\rm(A)}]  in the presence of  equilibrium, one has
\begin{align}\label{deltaD}
\lim_{L\to \infty}\delta_D(L)=\delta_{D}^{\infty}:=\frac{1}{\sqrt{2}}\int_{{  F^{-1}(\frac{F(\varphi_s)+3F_e}{4})  }}^{\varphi_s}\frac{{\rm d}\varphi}{\sqrt{F_e-F(\varphi)}}<\infty,
\end{align}
where $F^{-1}(\cdot)$ is the inverse  of $F(\cdot)$ on the monotonic decreasing interval;
in particular,
$\lim_{L\to \infty}\frac{\delta_D(L)}{L} =0$; that is,
$\delta_D(L)$ and $L$ have different length scales, and there is GC-layer in the presence of  equilibrium;

\item[{\rm(B)}]  in the absence of equilibrium, one has $\lim_{L\to \infty}\delta_D(L)=\infty$; in fact,
\[
 \frac{1}{4}\leq\liminf_{L\to \infty}\frac{\delta_D(L)}{L}\leq\limsup_{L\to \infty}\frac{ \delta_D(L)}{L}\leq1,\]
in particular, $\delta_D(L)$ and $L$ have the same length scale and there is no layer in the absence of equilibrium.
\end{itemize}
\end{thm}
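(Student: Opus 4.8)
Both identities in \eqref{gc22} involve the centre value $\alpha=\alpha(L)=\varphi(0)$; as in the relevant subcase of the proof of Theorem \ref{thequ}, $L\mapsto\alpha(L)$ is a strictly decreasing bijection with $\alpha(L)\to\varphi_e$ as $L\to\infty$ whenever the PB system has an equilibrium, and with $\alpha(L)\to-\infty$, hence $F(\alpha(L))\to+\infty$, as $L\to\infty$ in the absence of equilibrium (case (iii.2) of Proposition \ref{th1}; the other non-equilibrium configuration is handled by the reflection $\varphi\mapsto-\varphi$). Abbreviate $\beta=\beta(L):=F^{-1}\big(\tfrac14(F(\varphi_s)+3F(\alpha))\big)$ and $I_1:=\int_\alpha^\beta (F(\alpha)-F(\varphi))^{-1/2}{\rm d}\varphi$, $I_2:=\int_\alpha^{\varphi_s}(F(\alpha)-F(\varphi))^{-1/2}{\rm d}\varphi$, so that \eqref{gc22} becomes $L/\sqrt2=I_2$ and $\sqrt2\,\delta_D=I_2-I_1$, i.e. $2\delta_D/L=1-I_1/I_2$. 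Since $F(\alpha)>F(\varphi_s)$ for every $L>0$ and $F$ is strictly decreasing on the branch in use, one has $\alpha<\beta<\varphi_s$, hence $0<I_1<I_2$ and $\delta_D<L/2$; this already gives $\limsup_{L\to\infty}\delta_D(L)/L\le1/2\le1$. What remains is to control $I_1/I_2$, which is governed by the behaviour of the lower endpoint $\beta$.

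For part (A) the plan is to pass to the limit in \eqref{gc22} directly. As $L\to\infty$ we have $\alpha\to\varphi_e$ and, by continuity of $F^{-1}$ on the decreasing branch, $\beta\to F^{-1}\big(\tfrac14(F(\varphi_s)+3F_e)\big)$. Throughout the interval of integration $F(\varphi)\le F(\beta)=\tfrac14(F(\varphi_s)+3F(\alpha))$, so $F(\alpha)-F(\varphi)\ge\tfrac14(F(\alpha)-F(\varphi_s))$, whose limit $\tfrac14(F_e-F(\varphi_s))$ is positive because $\varphi_e$ maximizes the concave $F$ (cf. \eqref{concaveF}); noting that the intervals of integration eventually lie in a fixed compact set on which $F(\alpha)-F(\varphi)$ stays bounded below by a positive constant, dominated convergence gives $\delta_D(L)\to\delta_D^\infty$ with $\delta_D^\infty$ as in \eqref{deltaD}. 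The same uniform lower bound on the compact limiting interval $[F^{-1}(\tfrac14(F(\varphi_s)+3F_e)),\varphi_s]$ shows the limiting integral is finite, i.e. $\delta_D^\infty<\infty$; since $L\to\infty$, finiteness of $\delta_D^\infty$ forces $\delta_D(L)/L\to0$, so $\delta_D$ and $L$ live on different scales and there is a GC layer.

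For part (B) the plan is to exploit that $f(\varphi)\to q_0<0$ as $\varphi\to-\infty$. Given $\epsilon\in(0,|q_0|)$, choose $M<\varphi_s$ with $|q_0|-\epsilon\le-f(\varphi)\le|q_0|$ for all $\varphi\le M$; integrating, $(|q_0|-\epsilon)(\varphi-\alpha)\le F(\alpha)-F(\varphi)\le|q_0|(\varphi-\alpha)$ for $\alpha<\varphi\le M$. Since $F(\alpha)\to+\infty$, also $F(\beta)=\tfrac14(F(\varphi_s)+3F(\alpha))\to+\infty$, hence $\beta\to-\infty$ and $\alpha<\beta<M$ for $L$ large. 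Splitting the integral defining $I_2$ at $M$: the tail $\int_M^{\varphi_s}$ is $O\big((F(\alpha))^{-1/2}\big)\to0$ (its integrand is $\le(F(\alpha)-\max_{[M,\varphi_s]}F)^{-1/2}$), while the square-root bounds give $\tfrac{2\sqrt{M-\alpha}}{\sqrt{|q_0|}}\le\int_\alpha^M\le\tfrac{2\sqrt{M-\alpha}}{\sqrt{|q_0|-\epsilon}}$ and $I_1\le\tfrac{2\sqrt{\beta-\alpha}}{\sqrt{|q_0|-\epsilon}}$. From $F(\alpha)-F(\beta)=\tfrac14(F(\alpha)-F(\varphi_s))$, the linear bounds on both differences, and the fact that $F(M)-F(\varphi_s)$ is a fixed constant, one gets $\beta-\alpha\le\tfrac{|q_0|(M-\alpha)+O(1)}{4(|q_0|-\epsilon)}$, hence $\limsup_{L\to\infty}\tfrac{\beta-\alpha}{M-\alpha}\le\tfrac{|q_0|}{4(|q_0|-\epsilon)}$. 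Combining, $\limsup_{L\to\infty}I_1/I_2\le\tfrac{|q_0|}{2(|q_0|-\epsilon)}$; letting $\epsilon\downarrow0$, $\limsup I_1/I_2\le1/2$. Therefore $\liminf_{L\to\infty}2\delta_D/L=1-\limsup I_1/I_2\ge1/2$, i.e. $\liminf_{L\to\infty}\delta_D(L)/L\ge1/4$; together with $\delta_D<L/2$ this is the asserted sandwich, and since $\liminf\delta_D/L>0$ while $L\to\infty$, also $\delta_D(L)\to\infty$. Thus $\delta_D$ and $L$ are comparable and there is no GC layer.

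The mechanical pieces — verifying dominated convergence, computing the primitives of $(\varphi-\alpha)^{-1/2}$, and checking that the bounded ``bending region'' of $F$ near $\varphi_s$ is negligible (valid precisely because $F(\alpha)\to\infty$) — are routine. The genuinely delicate point is the estimate in part (B) relating $\beta-\alpha$ to $M-\alpha$: one must show that the half-charge point $\beta$ divides the ``journey'' from $\alpha$ to $\varphi_s$ so that, after the square root in the time map, $I_1/I_2\to1/2$. This is where the coefficient $3$ in $\tfrac14(F(\varphi_s)+3F(\alpha))$ — inherited from replacing $\sigma$ by $\sigma/2$ and from the quadratic $\tfrac12u^2$ in the Hamiltonian \eqref{Hfun} — enters, and where the $\epsilon$-perturbations coming from $f(\varphi)\approx q_0$ must be carried through uniformly in $L$.
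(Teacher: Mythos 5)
Your proof is correct and follows essentially the same strategy as the paper: for part (A), pass to the limit directly in \eqref{gc22} using $\alpha\to\varphi_e$; for part (B), compare $\delta_D$ and $L$ via square-root time-map estimates and control the position of $\hat\varphi=F^{-1}\bigl(\tfrac14(F(\varphi_s)+3F(\alpha))\bigr)$ using the asymptotics of $f$ as $\varphi\to-\infty$. Your execution, however, is more careful and actually delivers the stated constant $1/4$. By splitting $I_2$ at a level $M$ chosen so that $|q_0|-\epsilon\le -f\le|q_0|$ below $M$, you bound \emph{both} $I_1$ above and $I_2$ below with comparison constants that both converge to $|q_0|$ as $\epsilon\downarrow 0$; the factor $\sqrt{|q_0|}/\sqrt{|q_0|-\epsilon}$ therefore cancels in the limit and $\limsup I_1/I_2\le 1/2$ follows, hence $\liminf\delta_D/L\ge 1/4$. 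This is the point where the paper's own sketch is loose: the paper's displayed bounds use $-f(\varphi_s)$ (a fixed number strictly larger than $|q_0|$) as the comparison constant in the $\delta_D$ estimate, so the lower bound it produces for $\delta_D/L$ carries a residual prefactor $\sqrt{-f(\alpha)}/\sqrt{-f(\varphi_s)}<1$ and does not by itself give $1/4$; moreover, the asserted limit $\lim_{\alpha\to-\infty}\sqrt{(\hat\varphi-\alpha)/(\varphi_s-\alpha)}=3/4$ should be $1/2$ (from $\mathrm{d}\hat\varphi/\mathrm{d}\alpha\to 3/4$ and L'H\^opital one gets the ratio inside the root tending to $1/4$), and the displayed upper bound for $L$ is missing a factor of $2$. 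Your replacement of $-f(\varphi_s)$ by the asymptotic value $|q_0|$ on the dominant sub-interval, with the tail $\int_M^{\varphi_s}$ discarded as $O(F(\alpha)^{-1/2})$, is precisely what is needed to make the argument close, and it matches the heuristic computation $\delta_D/L\to 1/4$.
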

\begin{proof}
(A) Considering (\ref{thequ}) and following the arguments employed in the proof of Theorem \ref{thgc}, one can readily establish its veracity.

(B) Set $\hat \varphi={  F^{-1}(\frac{F(\varphi_s)+3F(\alpha)}{4})}$. Then we have
\[\lim_{\alpha\to-\infty}\hat \varphi=-\infty,~~\frac{{\rm d}\hat\varphi}{{\rm d}\alpha}=\frac{3f(\alpha)}{4f(\hat\varphi)},\]
which implies
\[\lim_{\alpha\to-\infty}\sqrt{\frac{\hat\varphi-\alpha}{\varphi_s-\alpha}}=\frac{3}{4}.\]
The  statement (B) of Theorem \ref{thgc2} follows immediately from
\begin{align*}
\delta_D\geq \frac{\sqrt{2(\varphi_s-\alpha)}}{\sqrt{-f(\varphi_s)}}\left(1-\sqrt{\frac{\hat\varphi-\alpha}{\varphi_s-\alpha}}    \right),~~
L\leq \frac{\sqrt{2(\varphi_s-\alpha)}}{\sqrt{-f(\alpha)}}.
\end{align*}
This completes the proof.
\end{proof}

 Once again, for the Dirichlet BVP, there is a GC-layer  if and only if an equilibrium is present for the PB system.
Furthermore, we will show that presence of equilibrium is also responsible for a number of other critical features.

\subsection{Surface charge density for surface potentials}\label{secSCD}
 In this part,  we examine properties of surface charge density for  surface potentials.
For simplicity, we restrict our analysis to the case where the surface potentials are equal. However, it should be noted that similar results hold for the case where the surface potentials are not equal (see Theorems \ref{thnon1}-\ref{thnon3} in Section \ref{critL}).

\subsubsection{Saturation of surface charge density }\label{Saturation}
Providing $\varphi_0=\varphi_1$,  one has $-\varphi'(-L/2)=\varphi'(L/2)$ and, by Coulomb's law, it equals the surface charge density at $x=-L/2$ (or the negative surface charge density at $x=L/2$).  We are interested in the dependence of the magnitude
\begin{align}\label{SCD}
\sigma(L,q_0):=|\varphi'(-L/2)|=|\varphi'(L/2)|
\end{align}
 of the surface charge density on the length $L$ and on large fixed charge $q_0$. We also simply refer to $\sigma(L,q_0)$ as the surface charge density.

Intuitively, to maintain the surface potential $\varphi_0=\varphi_1$, the surface charge $\sigma(L,q_0)$ should increase as the length $L$ does.
On the other hand, the limiting behavior of $\sigma(L,q_0)$ as $L\to \infty$ depends critically on wether or not an equilibrium is present. More precisely, we have

\begin{thm}\label{thequ1}
Assume  $\varphi_0=\varphi_1$ and let $\varphi(x)$ be the unique solution of D-BVP (\ref{PB})-(\ref{bc}). Then,
\[\sigma(L,q_0)=\max\{|\varphi'(x)|: x\in [-L/2,L/2]\}\]
 and $\sigma(L,q_0)$ is increasing in $L$.
Furthermore, one has
\begin{itemize}
\item[{\rm(A)}]  in the presence of  equilibrium,
$\displaystyle{
\lim_{L\rightarrow\infty}\sigma(L,q_0)=\sqrt{2(F_e-F(\varphi_0))}<\infty;}
$

\item[{\rm(B)}]  in the absence of  equilibrium,
$\displaystyle{\lim_{L\rightarrow\infty}\sigma(L,q_0)=\infty.}$
\end{itemize}
\end{thm}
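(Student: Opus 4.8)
The plan is to reduce the whole statement to the first integral $\mathcal{H}(\varphi,u)=\tfrac12 u^2+F(\varphi)$ evaluated along the unique solution orbit, together with the monotonicity of the time-map already established in the proof of Theorem~\ref{thequ}. First I would record the symmetry of the solution: since $\varphi_0=\varphi_1$, uniqueness together with the time-reversibility of system~(\ref{Hpb}) forces $\varphi(-x)=\varphi(x)$, hence $\varphi'(-x)=-\varphi'(x)$, so $\varphi'(0)=0$ and $|\varphi'(-L/2)|=|\varphi'(L/2)|=\sigma(L,q_0)$. Writing $\alpha=\varphi(0)$, the orbit lies on the level set $\{\mathcal{H}=F(\alpha)\}$, so $\tfrac12\varphi'(x)^2=F(\alpha)-F(\varphi(x))\ge0$ with equality only at $x=0$. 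Along the orbit $\varphi(x)$ stays between $\varphi_0$ and $\alpha$, hence on one side of the equilibrium value, so $f(\varphi(x))=-\varphi''(x)$ has a fixed sign and $\varphi'(x)$ is strictly monotone on $[-L/2,L/2]$ — this is exactly the list of observations following Theorem~\ref{thequ}. Combined with $\varphi'(-L/2)=-\varphi'(L/2)$ this gives $\max_{[-L/2,L/2]}|\varphi'|=|\varphi'(\pm L/2)|=\sigma(L,q_0)$, and evaluating the first integral at $x=-L/2$ yields the key formula
\[
\sigma(L,q_0)=\sqrt{2\big(F(\alpha)-F(\varphi_0)\big)}.
\]

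Next I would handle monotonicity in $L$. By the proof of Theorem~\ref{thequ}, $L=T(\alpha)$ (or $L=\hat T(\alpha)$) with $T$ strictly increasing (resp. $\hat T$ strictly decreasing), so as $L$ grows $\alpha(L)$ moves strictly monotonically from $\varphi_0$ toward the equilibrium value $\varphi_e\in\{\varphi^*,+\infty,-\infty\}$ of the case at hand in Proposition~\ref{th1}. On the interval between $\varphi_0$ and $\varphi_e$ one has $F'=f\ne0$, and $f(\alpha)$ and $\mathrm{d}\alpha/\mathrm{d}L$ have the same sign in every case (both positive when $\varphi_e>\varphi_0$, both negative when $\varphi_e<\varphi_0$); hence $\tfrac{\mathrm d}{\mathrm dL}\sigma^2=2f(\alpha)\tfrac{\mathrm d\alpha}{\mathrm dL}>0$, so $\sigma(L,q_0)$ is increasing in $L$.

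Then the two limits follow by letting $L\to\infty$, which by the above forces $\alpha(L)\to\varphi_e$, and this is where the dichotomy enters. In case (A), if $\varphi_e=\varphi^*$ is finite then $F(\alpha)\to F(\varphi^*)=F_e$ by continuity; if $\varphi_e=\pm\infty$ then $q_0=0$, so $F(\varphi)=-\sum_j c_j^b e^{-z_j\varphi}\to 0=F_e$ as $\varphi\to\varphi_e$. Either way $\sigma(L,q_0)\to\sqrt{2(F_e-F(\varphi_0))}<\infty$. In case (B), $\alpha(L)\to+\infty$ or $-\infty$, but now $q_0\ne0$ and the linear term dominates: $F(\varphi)=q_0\varphi-\sum_j c_j^b e^{-z_j\varphi}\to+\infty$ as $\alpha$ tends to the relevant infinity, so $\sigma(L,q_0)=\sqrt{2(F(\alpha)-F(\varphi_0))}\to\infty$.

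The analytic content here is light; the one point that needs care is the unified treatment of the equilibrium-at-infinity case, where one must verify that $F$ remains bounded along the orbit, with limit exactly $F_e=0$, as $\alpha\to\pm\infty$. That is precisely what fails in the no-equilibrium case because of the linear term $q_0\varphi$ in $F$, and this contrast is the whole mechanism behind (A) versus (B). A secondary point is confirming $\alpha(L)\to\varphi_e$ as $L\to\infty$ in the no-equilibrium cases (iii.1)–(iii.2); this is immediate from the fact, established in the proof of Theorem~\ref{thequ}, that $T$ (resp. $\hat T$) is a bijection from the appropriate interval onto $(0,\infty)$.
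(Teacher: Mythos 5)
Your proposal is correct and follows essentially the same route as the paper's proof: use the first integral to write $\sigma(L,q_0)=\sqrt{2(F(\alpha)-F(\varphi_0))}$, invoke the time-map monotonicity from the proof of Theorem~\ref{thequ} to get $\alpha(L)\to\varphi_e$ monotonically, and then read off the limit of $F(\alpha)$. You supply somewhat more detail than the paper at two points the paper leaves implicit — the argument that $|\varphi'|$ is maximized at the endpoints (you derive it from strict monotonicity of $\varphi'$ plus symmetry, where the paper just cites the phase portrait), and the explicit verification that in the no-equilibrium case the linear term $q_0\varphi$ drives $F(\alpha)\to+\infty$ — but these are elaborations, not a different method.
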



\begin{proof}
We consider  the saddle case (i) of Proposition \ref{th1} and $\varphi_0<\varphi^*$.
From the phase plane portraits, one can easily see that  $|\varphi'(x)|$ takes the maximum at $x=\pm L/2$,
which leads to
\[
\sigma(L,q_0)=\max\{|\varphi'(x)|: x\in [-L/2,L/2]\}.
\]
It follows from
the proof of Theorem \ref{thequ} that the middle potential $\alpha=\varphi(0)$ increases as $L$ increases, in particular,
$\alpha$ approaches to the equilibrium $\varphi^*$ as $L$ goes to infinity. Hence one has
$$
\lim_{L\rightarrow\infty}\sigma(L,q_0)=\lim_{L\rightarrow\infty}\sqrt{2(F(\alpha)-F(\varphi_0))}=\sqrt{2(F_e-F(\varphi_0))}.
$$

Proceeding as above, one can  easily prove the remaining cases.
\end{proof}

An interesting physical interpretation of Theorem \ref{thequ1}  is that, for given potential $\varphi_0=\varphi_1$ at the two surfaces, the magnitude  $\sigma(L,q_0)$ of the surface charge density increases in $L$ as expected. But, for the cases with either a saddle equilibrium or  the equilibrium-at-infinity, the existence of the (finite) limit
\[\sigma(q_0):=\lim_{L\rightarrow\infty}\sigma(L,q_0)=\sqrt{2(F_e-F(\varphi_0))}<\infty\]
may not be well expected. It asserts that, in the presence of equilibrium,
 the surface charge density {\em  saturates}  with the \emph{maximum surface charge density} $\sigma(q_0)$ for large $L$.
  On the other hand,  when there is No equilibrium, there is No saturation of surface charge density $\sigma(L,q_0)$ for large $L$.

 The saturation phenomenon is rather a surprise (at least to the authors).  However, it can be understood easily and intuitively
 from the phase plane portraits (a), (b) and (c) in Figure \ref{figp}.
Indeed, for the case with a saddle equilibrium $(\varphi^*,0)$,
the stable and unstable manifolds (solid curves in Figure \ref{figp} (a)) of the equilibrium $(\varphi^*,0)$ form a sector that can hold the orbit in between, in the sense that, as $L$ increases, the middle part of the orbit gets closer to and takes longer time (length) near  the equilibrium $(\varphi^*,0)$. As a consequence, for given surface potentials, the surface charge stays bounded uniformly in $L$.
For the case with the  equilibrium-at-infinity, although the middle part of the orbit moves toward infinity as $L$ increases, the orbit is always bounded by the stable and unstable manifolds of the equilibrium-at-infinity.

Whereas when no equilibrium is presented, say,  the case in Figure \ref{figp} (d), for given potential $\varphi_0=\varphi_1$ at the two surfaces, as $L$ increases, the maximal value of $\varphi$ moves to infinity, and hence, the value $\varphi'(-L/2)$ goes to infinity too.
This is due to the fact that there is no equilibrium for relaxation and no stable/unstable manifolds to bound the orbit as $L$ increases.

The above observations from the phase plane portraits reveal that the mechanism for saturation of surface charge density in large $L$ is the presence of the equilibrium (both finite and infinite).

%

\subsubsection{Maximum surface charge density on large $q_0$}
We now examine, in the saddle case, how   the maximum surface charge density
 \begin{align}\label{SConq}
 \sigma(q_0):=\lim_{L\rightarrow\infty}\sigma(L,q_0)=\sqrt{2(F_e-F(\varphi_0))}
 \end{align}
  depends on strongly large fixed charges $q_0$ through $\varphi^*$. (The equilibrium-at-infinity case has $q_0=0$.) It turns out $\sigma(q_0)\to \infty$ as $|q_0|\to \infty$. Our result (Proposition \ref{rate4MC} below) provides the ``rates'' at which $\sigma(q_0)$ approaches infinity.

  For convenience, we use the notation $\varphi^*=\varphi^*(q_0)$
to indicate the dependence of $\varphi^*$ on $q_0$ and recall that $\varphi^*(q_0) $ is uniquely determined by
\begin{align}\label{q2phistar}
q_0+ \sum_{j=1}^{N} z_jc_{i}^{b}e^{-z_j\varphi^*(q_0)}=0.
\end{align}

\begin{prop}\label{rate4MC}

Assume the PB system (\ref{Hpb}) admits  a saddle equilibrium.

\begin{itemize}
\item[\rm{(N)}] If $z_{min}:=\min_{1\leq j\leq N}\{z_j\}<0$, then
\[\lim_{q_0\to \infty}  \frac{\sigma^2(q_0)}{q_0\ln q_0}=-\frac{2}{z_{min}}.\]

\item[\rm{(P)}] If $z_{max}=\max_{1\leq j\leq N}\{z_j\}>0$, then
    \[\lim_{q_0\to -\infty}  \frac{\sigma^2(q_0)}{q_0\ln |q_0|}=-\frac{2}{z_{max}}.\]
\end{itemize}
\end{prop}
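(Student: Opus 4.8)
The plan is to evaluate $\sigma^2(q_0)$ directly. By Theorem~\ref{thequ1}(A), in the saddle case $\sigma^2(q_0)=2\big(F_e-F(\varphi_0)\big)=2\big(F(\varphi^*(q_0))-F(\varphi_0)\big)$, and by (\ref{Hfun}) together with the defining relation (\ref{q2phistar}) one has $F(\varphi)=q_0\varphi-\sum_{j=1}^Nc_j^b e^{-z_j\varphi}$. Since $\varphi_0$ is held fixed, $F(\varphi_0)=O(1)$, so everything reduces to the asymptotics of $F(\varphi^*(q_0))=q_0\varphi^*(q_0)-\sum_{j=1}^Nc_j^be^{-z_j\varphi^*(q_0)}$, whose key input is the leading behavior of $\varphi^*(q_0)$. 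I will carry out (N) in full and then deduce (P) from the involution $\varphi\mapsto-\varphi$, $z_j\mapsto-z_j$, $q_0\mapsto-q_0$, which maps the PB equation to itself, leaves $\sigma^2$ invariant (both $\varphi^*$ and $\varphi_0$ change sign, while $F$ changes only by an additive constant), sends $q_0\to+\infty$ to $\widetilde q_0:=-q_0\to+\infty$, and interchanges $z_{min}$ with $-z_{max}$; together with $\widetilde q_0\ln\widetilde q_0=-q_0\ln|q_0|$ this converts the limit from (N) into exactly the one claimed in (P).

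For (N), the first step is to locate $\varphi^*=\varphi^*(q_0)$. The function $g(\varphi):=\sum_jz_jc_j^be^{-z_j\varphi}$ satisfies $g(\varphi^*)=-q_0$ by (\ref{q2phistar}); since $g'(\varphi)=-\sum_jz_j^2c_j^be^{-z_j\varphi}<0$ and $z_{min}<0$ forces $g(\varphi)\to-\infty$ as $\varphi\to+\infty$, we get $\varphi^*\to+\infty$ as $q_0\to+\infty$. Let $Z_-:=\{j:z_j=z_{min}\}$ and $A:=\sum_{j\in Z_-}c_j^b>0$. Since $z_{min}-z_j<0$ for $j\notin Z_-$, factoring out the dominant exponential gives $-q_0=z_{min}A\,e^{-z_{min}\varphi^*}\big(1+\sum_{j\notin Z_-}\tfrac{z_jc_j^b}{z_{min}A}e^{(z_{min}-z_j)\varphi^*}\big)=z_{min}A\,e^{-z_{min}\varphi^*}(1+o(1))$, so $e^{-z_{min}\varphi^*}=\tfrac{q_0}{|z_{min}|A}(1+o(1))$, and taking logarithms $\varphi^*(q_0)=\tfrac{\ln q_0}{-z_{min}}+O(1)$.

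The second step substitutes this into $F(\varphi^*)$. The term $q_0\varphi^*=\tfrac{q_0\ln q_0}{-z_{min}}+O(q_0)$ carries the dominant growth. In the sum $\sum_jc_j^be^{-z_j\varphi^*}$, the part over $Z_-$ equals $A\,e^{-z_{min}\varphi^*}=\tfrac{q_0}{|z_{min}|}(1+o(1))=O(q_0)$, while for $j\notin Z_-$ one has $c_j^be^{-z_j\varphi^*}=c_j^be^{(z_{min}-z_j)\varphi^*}e^{-z_{min}\varphi^*}=o(e^{-z_{min}\varphi^*})=o(q_0)$; hence $\sum_jc_j^be^{-z_j\varphi^*}=O(q_0)$. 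Therefore $\sigma^2(q_0)=2q_0\varphi^*-2\sum_jc_j^be^{-z_j\varphi^*}-2F(\varphi_0)=\tfrac{-2}{z_{min}}q_0\ln q_0+O(q_0)$, and dividing by $q_0\ln q_0$ and letting $q_0\to\infty$ gives $\lim_{q_0\to\infty}\sigma^2(q_0)/(q_0\ln q_0)=-2/z_{min}$, which is (N); part (P) then follows from the involution above.

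The only genuine work is in the first step: isolating the single dominant term $e^{-z_{min}\varphi^*}$ and verifying that the remaining exponentials are truly $o(1)$ relative to it, so as to get $\varphi^*=\tfrac{\ln q_0}{-z_{min}}+O(1)$ (in fact the weaker $\varphi^*\sim\tfrac{\ln q_0}{-z_{min}}$ already suffices, since the exponential sum is only $O(q_0)=o(q_0\ln q_0)$); the remainder is routine order-of-magnitude counting. Two minor points should be settled at the outset: $\varphi_0<\varphi^*(q_0)$ for $q_0$ large because $\varphi^*\to+\infty$, so Theorem~\ref{thequ1} applies; and the saddle hypothesis is compatible with $q_0\to+\infty$, since if all $z_j<0$ then Proposition~\ref{th1}(i) requires $z_jq_0<0$, i.e.\ $q_0>0$, which is precisely the regime considered, while in the mixed-sign case every $q_0$ yields a saddle.
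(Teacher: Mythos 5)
Your proof is correct, and it takes a genuinely different route from the paper's. The paper's proof of (N) applies L'Hopital's rule to the ratio $\big(F(\varphi^*(q_0))-F(\varphi_0)\big)/(q_0\ln q_0)$, differentiating implicitly via ${\rm d}\varphi^*/{\rm d}q_0 = 1/\sum z_j^2c_j^be^{-z_j\varphi^*}$ and repeatedly substituting the defining relation $q_0=-\sum z_jc_j^be^{-z_j\varphi^*}$ from (\ref{q2phistar}) to simplify the resulting quotients; the limit $-1/z_{min}$ then falls out after dividing numerator and denominator by the dominant exponential. You instead solve (\ref{q2phistar}) asymptotically for $\varphi^*(q_0)$ itself, extracting $\varphi^*=\tfrac{\ln q_0}{-z_{min}}+O(1)$ by isolating the $e^{-z_{min}\varphi^*}$ term, and then substitute directly into $F(\varphi^*)$ to get $\sigma^2(q_0)=\tfrac{-2}{z_{min}}q_0\ln q_0 + O(q_0)$. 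Your approach is a touch more informative (it yields the $O(q_0)$ error term, not just the leading coefficient) and keeps the hypothesis $z_{min}<0$ visibly in play at the step where it is used, whereas the paper's L'Hopital computation is shorter to write but buries the dominant-term comparison inside a quotient limit. For part (P) the paper simply says it is ``established in a similar way''; your involution $(\varphi,z_j,q_0)\mapsto(-\varphi,-z_j,-q_0)$ is a cleaner way to get (P) for free from (N), and the verification that $\sigma^2$ is invariant is correct, though the parenthetical remark that ``$F$ changes only by an additive constant'' is slightly off --- the precise identity is $\tilde F(-\varphi)=F(\varphi)$ exactly, which is what makes $\tilde F(\tilde\varphi^*)-\tilde F(\tilde\varphi_0)=F(\varphi^*)-F(\varphi_0)$. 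That is a wording nit, not a gap; the argument stands.
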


\begin{proof}
(N) It follows from (\ref{q2phistar}) that $\varphi^*=\varphi^*(q_0)$ satisfies
\begin{align}\label{q02}
\frac{{\rm d}\varphi^*}{{\rm d}q_0}=\frac{1}{\sum_{i=1}^{N} z_i^2c_{i}^{b}e^{-z_i\varphi^*(q_0)}}>0,~~\lim_{q_0\to  \infty}\varphi^*= \infty,~~\lim_{q_0\to \infty}\frac{{\rm d}\varphi^*}{{\rm d}q_0}= 0.
\end{align}

Then, by the  L'Hopital's rule, one obtains
\begin{align*}
\lim_{q_0\to \infty}&  \frac{F(\varphi^*(q_0))-F(\varphi_0)}{q_0\ln q_0}=\lim_{q_0\to \infty} \frac{\varphi^*(q_0)}{\ln q_0}-\lim_{q_0\to \infty} \frac{ \sum_{i=1}^{N} c_{i}^{b}e^{-z_i\varphi^*(q_0)} }{q_0\ln q_0}\\
=&\lim_{q_0\to \infty} q_0\frac{{\rm d}\varphi^*}{{\rm d}q_0}+\lim_{q_0\to + \infty} \frac{\sum_{i=1}^{N} c_{i}^{b}e^{-z_i\varphi^*(q_0)}}{\ln q_0(\sum_{i=1}^{N} c_{i}^{b}z_ie^{-z_i\varphi^*(q_0)})}\\
=&-\lim_{\varphi^*\to \infty} \frac{\sum_{i=1}^{N} c_{i}^{b}z_ie^{-(z_i-z_{min})\varphi^*}}{\sum_{i=1}^{N} c_{i}^{b}z_i^2e^{-(z_i-z_{min})\varphi^*}}+\lim_{\varphi^*\to  \infty} \frac{\sum_{i=1}^{N} c_{i}^{b}e^{-(z_i-z_{min})\varphi^*}}{\ln q_0(\sum_{i=1}^{N} c_{i}^{b}z_ie^{-(z_i-z_{min})\varphi^*})}\\
=&-\frac{1}{z_{min}}.
\end{align*}

(P) It can be established in a similar way.
\end{proof}

 \subsection{Electric pressure for Neumann boundary conditions}\label{sEP}

  {\em Electric pressure} $P$ of an electrolytes in a region is the functional derivative of the total electrostatic energy $F$ with respect to the volume. Electric pressure is 
one of the {\em measurable} physical quantities of an electrolytes -- {\em an important feature of electric pressure}. For two-plate problem considered in this work,  the functional derivative is taken with respect to 
 the  distance between the two plates, and 
 the electric pressure measures the force from electrostatic interaction acting on the left plate A (or equivalently
the right plate B)  per unit area as indicated  in Figure \ref{figf}.

 We now present a quick derivation of the electric pressure $P$ for the setup of two symmetric plates, following the treatment in \cite{ep3,Marcus1955,xing}.
There are several ways to obtain the formula of electric pressure. One of them is the derivation of the variation of
the  electrostatic free energy with respect to the inter-membrane distance.
Consider the electrostatic free energy
\begin{align}
\mathcal{F}=&\underbrace{\int_{-\frac{L}{2}}^{\frac{L}{2}}\left(-\frac{\varepsilon_r(x)\varepsilon_0}{2}\Big(\frac{{\rm d}\varphi}{{\rm d}x}\Big)^2+e_0\varphi(x)\Big(q(x)+\sum_{j=1}^Nz_jc_j(x) \Big)-\sum_{j=1}^N\mu_j^bc_j(x)   \right){\rm d}x}_{\mathcal{U}}\label{energy}
\\
+&\underbrace{\int_{-\frac{L}{2}}^{\frac{L}{2}}\left(k_BT\sum_{j=1}^Nc_j(x)(\ln c_j(x)-1)\right)}_{\mathcal{-TS}}{\rm d}x,\notag
\end{align}
where $\mu_j^b=k_BT\ln c_j^b$ is the bulk electrochemical potential, $\mathcal{U}$ is the internal energy and $\mathcal{-TS}$ is the entropy. Here the function $\varphi(x)$ satisfies the Poisson equation (\ref{gPBe})
and $c_j(x)$ satisfies the Boltzmann distributions (\ref{dist}), which corresponds to the
equilibrium of the free energy functional $\mathcal{F}$. Indeed, the  variation $\delta \mathcal{F}/\delta {\varphi}=0$ leads to  (\ref{gPBe}), and the variation $\delta \mathcal{F}/\delta {c_j}=0$ leads to
(\ref{dist}).  Providing that $\varepsilon_r(x)=\varepsilon_r$ and $q(x)=q_0$ being constants, we rescale the coordinate $x\to X=x/L+1/2$  and convert (\ref{energy}) into
\begin{align}\label{energy2}
\mathcal{F}=&\int_{0}^{1}\left(-\frac{\varepsilon_r\varepsilon_0}{2L}\left(\frac{{\rm d}\varphi}{{\rm d}X}\right)^2+e_0\varphi L\left(q_0+\sum_{j=1}^Nz_jc_j\right)-L\sum_{j=1}^N\mu_j^bc_j    \right){\rm d}X
\\
+&\int_{0}^{1}\left(k_BTL\sum_{j=1}^Nc_j(\ln c_j-1)\right){\rm d}X.
\end{align}

Then the  electric pressure is given by
\begin{align}
P=-\frac{\partial \mathcal{F}}{\partial L}=&-\int_{0}^{1}\left(\frac{\varepsilon_r\varepsilon_0}{2L^2}\left(\frac{{\rm d}\varphi}{{\rm d}X}\right)^2+e_0\varphi \Big(q_0+\sum_{j=1}^Nz_jc_j\Big)-\sum_{j=1}^N\mu_j^bc_j    \right){\rm d}X  \notag
\\
&-\int_{0}^{1}\left(k_BT\sum_{j=1}^Nc_j(\ln c_j-1)\right){\rm d}X,\label{EP}
\end{align}
Owing to the  Boltzmann distributions (\ref{dist}), we eliminate the variables $c_j$  from (\ref{EP}), obtaining
\begin{align}
P=&-\int_0^1  \left( \frac{\varepsilon_r\varepsilon_0}{2L^2}\left(\frac{{\rm d}\varphi}{{\rm d}X}\right)^2+e_0q_0\varphi-k_BT\sum_{j=1}^Nc_j^be^{-\frac{z_je_0}{k_BT}\varphi}\right){\rm d}X,\notag  
\\
=&-\frac{1}{L}\int_{-\frac{L}{2}}^{\frac{L}{2}}  \left( \frac{\varepsilon_r\varepsilon_0}{2}\left(\frac{{\rm d}\varphi}{{\rm d}x}\right)^2+e_0q_0\varphi-k_BT\sum_{j=1}^Nc_j^be^{-\frac{z_je_0}{k_BT}\varphi}\right){\rm d}x.\label{EP2}
\end{align}
The integrand in the definite integral on the right side of (\ref{EP2}) is, in fact, the first integral of the PB equation (\ref{PB}) before nondimensionalization. As a result, this integrand remains constant along the solution $\varphi(x)$ of the PB equation. 
Therefore, the electric pressure between the  charged plates is given by
  \begin{align}\label{pre}
P=-\frac{\varepsilon_r\varepsilon_0}{2}\left(\frac{{\rm d}}{{\rm d}x}\varphi(x_0)\right)^2-e_0q_0\varphi(x_0)+k_BT\sum_{j=1}^Nc_j^be^{-\frac{z_je_0}{k_BT}\varphi(x_0)},
\end{align}
where $x_0$ represents any point in the interval  $[-L/2,~L/2]$ and it is typically chosen to be $x_0=0$.


It should be pointed out that  $P$ is not the net electric pressure
between two plates since the ionic gas outside the plates also exerts pressure on
them \cite{xing}. The net electric pressure $P_{net}$ only differs a constant from $P$, providing that the PB
system has a (finite/infinity) equilibrium. Hence, we also call
$P$ the internal electric pressure.

Consider the PB equation with the Neumann boundary condition (\ref{nbc0}) and $\sigma>0$. An application of the first integral $\mathcal{H}$ and $\varphi'(0)=0$ leads to
\begin{lem}\label{epLem} Assume $\sigma>0$. In terms of  the dimensionless variables in (\ref{sca1})-(\ref{sca2}),
the electric pressure (after dropping the tildes) can be written as
\begin{align}\label{ep}
P=-k_BT\Lambda F(\alpha),
\end{align}
with $\Lambda=\sum_{i=1}^Nz_i^2c_i^b$ is the ionic strength of the mixture and $\alpha=\varphi(0)$.
\end{lem}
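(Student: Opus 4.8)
The plan is to begin from the pointwise expression (\ref{pre}) for the electric pressure, which holds at an \emph{arbitrary} point $x_0\in[-L/2,L/2]$ precisely because the integrand in (\ref{EP2}) is a first integral of the PB equation and is therefore constant along the solution $\varphi(x)$. The natural choice is $x_0=0$: at the midpoint the electric field $\varphi'(0)$ vanishes, so the kinetic term $-\frac{\varepsilon_r\varepsilon_0}{2}\big(\varphi'(0)\big)^2$ in (\ref{pre}) disappears and one is left with $P=-e_0q_0\varphi(0)+k_BT\sum_{j}c_j^be^{-z_je_0\varphi(0)/(k_BT)}$.

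First I would record why $\varphi'(0)=0$ for the symmetric Neumann problem. By the correspondence (\ref{re1}), the symmetric N-BVP (\ref{PB})--(\ref{nbc0}) is realized by the very orbit that solves the symmetric Dirichlet problem with $\varphi(-L/2)=\varphi(L/2)$; equivalently, by the time-reversibility of system (\ref{Hpb}) exploited in the proof of Theorem \ref{thequ}, if $(\varphi(x),u(x))$ is the unique solution then so is $(\varphi(-x),-u(-x))$, and uniqueness forces $u(0)=-u(0)$, hence $u(0)=\varphi'(0)=0$. In particular $\alpha=\varphi(0)$ is exactly the turning value already used throughout Section \ref{BVP}.

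It then remains to pass to the dimensionless variables (\ref{sca1})--(\ref{sca2}): substitute $\varphi(0)=\tfrac{k_BT}{e_0}\alpha$, $q_0=\Lambda\tilde q_0$ and $c_j^b=\Lambda\tilde c_j^b$ with $\Lambda=\sum_i z_i^2c_i^b$, and cancel the $e_0$ and $k_BT$ factors. The remaining bracket collapses to $k_BT\Lambda\big(\sum_j\tilde c_j^be^{-z_j\alpha}-\tilde q_0\alpha\big)$, which by the definition of $F$ in (\ref{Hfun}) is exactly $-k_BT\Lambda F(\alpha)$ (after dropping the tildes). Equivalently --- and perhaps more transparently --- one checks that the nondimensionalization turns the first integral in (\ref{EP2}) into $k_BT\Lambda\,\mathcal{H}(\varphi,u)=k_BT\Lambda\big(\tfrac12u^2+F(\varphi)\big)$, so that $P=-k_BT\Lambda\,\mathcal{H}(\varphi(x_0),u(x_0))$ at every point $x_0$; evaluating at $x_0=0$ with $u(0)=0$ yields the assertion.

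There is no genuinely hard step: once the geometric fact $\varphi'(0)=0$ is in hand, the rest is routine scaling bookkeeping (tracking the factors produced by $\lambda_D^2=\varepsilon_r\varepsilon_0k_BT/(e_0^2\Lambda)$). The only point that deserves attention is the vanishing of the field at the midpoint, since that is what reduces the three-term expression (\ref{pre}) to the single term $-k_BT\Lambda F(\alpha)$; it is exactly the symmetry of the orbit established in the proof of Theorem \ref{thequ}.
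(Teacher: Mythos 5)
Your proposal is correct and follows essentially the same route the paper indicates: the paper's own (very terse) justification is ``an application of the first integral $\mathcal{H}$ and $\varphi'(0)=0$,'' which is exactly what you spell out, together with the routine scaling bookkeeping passing from (\ref{pre}) to the dimensionless $F$ in (\ref{Hfun}). Your supporting argument for $\varphi'(0)=0$ via time-reversibility and uniqueness of the symmetric N-BVP is the same symmetry fact used implicitly in the paper, so there is no substantive difference in method.
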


 Note that $F$ is decreasing in $\alpha$ for $\sigma>0$, see  (\ref{Hfun}) for the formula of $F$.  Therefore, $P$ is increasing in $\alpha$. The next result is a direct consequence of  Proposition \ref{tha} on $\alpha=\alpha(\sigma, L)$.

\begin{thm}\label{cor} Assume $\sigma>0$.
The function $P=P(\sigma,L)$  is increasing in $\sigma$ and is decreasing in $L$. Moreover,
corresponding to each case of Theorem \ref{thN}, one has
\begin{align*}
&\lim_{L\to 0}P=\infty,\;\lim_{\sigma\to \infty}P=P_L,\;\lim_{L\to \infty}P=\lim_{\sigma\to 0}P=P_e\;\mbox{ for case (I)},
\\
&
\lim_{L\to -\frac{2\sigma}{q_0}}P=
\lim_{\sigma\to -\frac{q_0L}{2}}P=\infty,\;\;\lim_{L\to \infty}P=\lim_{\sigma\to 0}P=P_e\; \mbox{ for case (II)},
\\
&\lim_{L\to 0}P=\infty,\; \lim_{\sigma\to \infty}P=P_L,\;
\lim_{L\to -\frac{2\sigma}{q_0}}P=
\lim_{\sigma\to -\frac{q_0L}{2}}P=-\infty\; \mbox{ for case (III)},
\end{align*}
where $P_e=-k_BT\Lambda F_e$ and $P_L=-k_BT\Lambda F(\alpha^L)$.
\end{thm}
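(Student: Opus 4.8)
\textbf{Proof proposal for Theorem \ref{cor}.}

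The plan is to reduce everything to the behavior of $\alpha=\alpha(\sigma,L)$, which has already been completely analyzed in Proposition \ref{tha}. By Lemma \ref{epLem} we have $P=-k_BT\Lambda F(\alpha)$, so $P$ is a fixed decreasing function of $\alpha$ (since $F$ is monotone decreasing on the relevant range for $\sigma>0$, by the concavity \eqref{concaveF} together with $f(\varphi)<0$ for $\varphi>\varphi_e$). Therefore all monotonicity and limit statements for $P$ follow by composing the continuous decreasing map $\alpha\mapsto -k_BT\Lambda F(\alpha)$ with the corresponding statements for $\alpha$.

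First I would record the monotonicity: Proposition \ref{tha} states $\alpha$ is increasing in $\sigma$ and decreasing in $L$; since $P$ is a decreasing function of $\alpha$, it follows immediately that $P$ is increasing in $\sigma$ and decreasing in $L$. Next I would translate each limit in Proposition \ref{tha} through $P=-k_BT\Lambda F(\alpha)$, using continuity of $F$ on $\mathbb{R}$ and the appropriate one-sided limits of $F$ at the ends of its domain. Concretely: when $\alpha\to\infty$ one uses $F(\alpha)=q_0\alpha-\sum_j c_j^b e^{-z_j\alpha}\to-\infty$ in cases where some $z_j$ makes the exponential blow up or where $q_0<0$ dominates — these give $P\to\infty$; the limits $\lim_{L\to0}\alpha=\infty$ (cases I, III) and $\lim_{L\to-2\sigma/q_0}\alpha=\infty$ (case II) then yield $\lim_{L\to0}P=\infty$ and $\lim_{L\to-2\sigma/q_0}P=\infty$ respectively. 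The limit $\alpha\to\varphi_e$ (cases I, II as $L\to\infty$ or $\sigma\to0$) gives $P\to -k_BT\Lambda F(\varphi_e)=-k_BT\Lambda F_e=P_e$; the limit $\alpha\to\alpha^L$ (cases I, III as $\sigma\to\infty$) gives $P\to -k_BT\Lambda F(\alpha^L)=P_L$; and in case III the limit $\alpha\to-\infty$ as $L\to-2\sigma/q_0$ (equivalently $\sigma\to-q_0L/2$) gives, since $F(\alpha)=q_0\alpha-\sum_j c_j^b e^{-z_j\alpha}\to+\infty$ when $q_0<0$ and all $z_j<0$, that $P\to-\infty$. One must also check the case-(II) claim that $F_e=F(\varphi^*)$ is the relevant value (here $\varphi_e=\varphi^*$ finite, so continuity of $F$ suffices) and that in case (I) with $\varphi_e=\pm\infty$ one has $F_e=0$, consistent with the definition of $F_e$ given before Lemma \ref{funB}.

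The only genuine work beyond bookkeeping is verifying that $F(\alpha)\to-\infty$ (resp. $+\infty$) with the correct sign in each regime where $\alpha$ escapes to $\pm\infty$; this is an elementary consequence of the explicit formula \eqref{Hfun} for $F$ together with the sign conditions on $z_j$ and $q_0$ defining cases (I)–(III) in Theorem \ref{thN}, so I do not expect any real obstacle — the heavy lifting was already done in establishing Proposition \ref{tha}. I would present the argument as: (1) invoke Lemma \ref{epLem} and the monotonicity of $F$; (2) deduce monotonicity of $P$ in $\sigma$ and $L$; (3) pass each limit of Proposition \ref{tha} through $-k_BT\Lambda F(\cdot)$, checking the end-behavior of $F$ case by case; and conclude.
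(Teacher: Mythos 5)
Your overall strategy is exactly the paper's: express $P=-k_BT\Lambda F(\alpha)$ via Lemma~\ref{epLem}, note the monotonicity of $F$ on the relevant $\alpha$-range, and pass the limits of Proposition~\ref{tha} through this map. Your explicit case-by-case verification of the end-behavior of $F$ is a welcome addition, since the paper declares the theorem a ``direct consequence'' of Proposition~\ref{tha} without writing out those computations.

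However, there is a sign error in the monotonicity step that makes the written argument internally inconsistent. You assert ``$P$ is a fixed decreasing function of $\alpha$,'' but since $F$ is \emph{decreasing} in $\alpha$ on the relevant range (which you correctly observe), the composition $P=-k_BT\Lambda F(\alpha)$ is \emph{increasing} in $\alpha$; this is what the paper states immediately before Theorem~\ref{cor}. As written, your chain ``$\alpha$ increasing in $\sigma$, $P$ decreasing in $\alpha$, therefore $P$ increasing in $\sigma$'' is a non sequitur --- from those two premises the correct deduction would be that $P$ is \emph{decreasing} in $\sigma$, contradicting the theorem. The rest of your argument (the limits) silently uses the correct sign, since you compute $F(\alpha)\to\mp\infty$ and then flip the sign to get $P\to\pm\infty$; so only the monotonicity sentence needs to be fixed. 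Replace ``$P$ is a fixed decreasing function of $\alpha$'' with ``$P$ is an increasing function of $\alpha$'' and the deduction becomes valid. A minor secondary remark: the parenthetical justification ``$f(\varphi)<0$ for $\varphi>\varphi_e$'' does not literally apply in case (III), where no equilibrium $\varphi_e$ exists; there one has $f<0$ everywhere directly from $q_0<0$ and all $z_j<0$, so $F$ is decreasing on all of $\mathbb{R}$ --- the conclusion is unchanged, but the stated reason should be adjusted.
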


From Theorem \ref{cor}, we have the following observations:

\begin{itemize}

\item[{\rm (i)}] Suppose that the value of $L$ is fixed. The  pressure $P$ increases as $\sigma$ increases.
In addition, if there are anions in electrolyte, that is, $z_j<0$ for certain $1\leq j\leq N$, the  pressure approaches
a finite value as $\sigma$ goes to infinity. By contrast, if there are only cations in electrolyte, that is, $z_j>0$ for all $1\leq j\leq N$, $\sigma$ has an  upper bound $-q_0L/2$, and the  pressure approaches infinity as $\sigma$  approaches the critical value $-q_0L/2$.

\item[{\rm (ii)}] Similarly, suppose that the value of $\sigma$ is fixed. The  pressure $P$ decreases as $L$ increases.
Furthermore, the  pressure approaches
a finite value as $L$ goes to infinity when the PB system has an equilibrium. However, if the PB system has no  equilibrium, $L$ has an  upper bound $-2\sigma/q_0$, and the  pressure approaches negative infinity as $L$  approaches the critical value $-2\sigma/q_0$.

\item[{\rm (iii)}] Fixing  the value of pressure $P$, or equivalently, fixing the value of $\alpha$,
one can view $L=L(\sigma)$ as a function in $\sigma$. Then, $L$ is increasing in $\sigma$.
Corresponding to {\rm(i)}, as $\sigma$  approaches to infinity, one sees that $L$ approaches
a finite value
\begin{align}\label{CritL2}
L_{max}=\sqrt{2}\int_{\alpha}^{\infty}\frac{1}{\sqrt{F(\alpha)-F(\varphi)}}{\rm d}\varphi;
\end{align}
if there are anions in electrolyte; whereas $L$ approaches infinity  if there are only cations in electrolyte.
Clearly, (\ref{CritL2}) coincides with (\ref{alpha}). Physically, this fact means that when the electrolyte contains anions, there exists a maximum separation between two plates
and the surface charge density yielding this
interaction at this maximal separation is infinity. For $L>L_{max}$, the value of fixed pressure $P$ cannot be fixed
and should become smaller. In addition, we also see that $L$ approaches zero when $\sigma$ approaches zero.

\end{itemize}

\subsection{Critical length for monotonicity of electric potential}\label{critL}
 Recall that the electric force at $x$ is the derivative $\varphi'(x)$ of the electric potential. Thus the monotonicity of the potential determines the direction of the electric force, and hence, is of significant. In this part, we will deal with the D-BVP (\ref{PB})-(\ref{bc}) when $\varphi_0\neq\varphi_1$ and determine the monotonicity of $\varphi(x)$. It turns out, once other parameters are fixed, there is a critical value of separation length for transitions between monotonic potential to non-monotonic potential. This is the concern of the present section.

 Note that if $\varphi(x)$ is a solution of the
D-BVP (\ref{PB})-(\ref{bc}), then $\widehat{\varphi}(x):=\varphi(-x)$ is a solution the D-BVP (\ref{PB}) with the boundary
value condition $\widehat\varphi(-L/2)=\varphi_1,~\widehat\varphi(L/2)=\varphi_0$. Thus without loss of generality we can only consider the case
$\varphi_0<\varphi_1$.  It can be seen from the phase portraits of system (\ref{Hpb}) that, unlike the equi-potential case, D-BVP with non-equal surface potential $\varphi_0< \varphi_1$ has two types of solutions, one is monotonically increasing in $x$ and another one is non-monotonic in $x$, see Figure \ref{nonequ}.

\begin{figure}[htpp]
\centering
\includegraphics[width=0.5\textwidth]{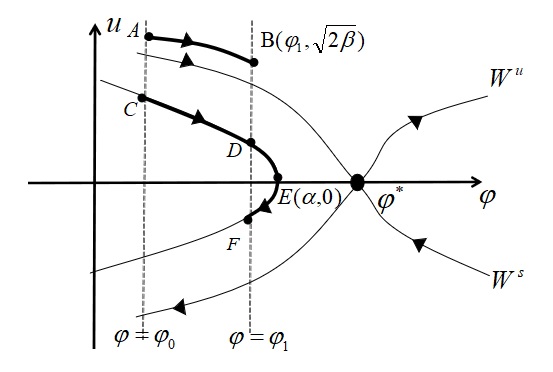}
\caption{
Two types of solutions in saddle case: monotone  solutions
represented by the arc AB, non-monotone solutions represented by the arc CEF.
}
\label{nonequ}
\end{figure}


Define  the following critical distance
$L_0=L_0(q_0,\varphi_0,\varphi_1,c_1^b,\cdots,c_N^b,z_1,\cdots,z_N)$ which either has the form
\begin{align}\label{l++}
L_0=\int_{\varphi_0}^{\varphi_1}\frac{{\rm d}\varphi}{\sqrt{2(F(\varphi_1)-F(\varphi))}}
\end{align}
if one of the four conditions holds:
\\
(a.1) all $z_j>0$ and $q_0\geq 0$;
\\
(a.2) all $z_j>0$,~$q_0<0$ and $\varphi_0<\varphi_1<\varphi^*$;
\\
(a.3) all $z_j<0$, $q_0>0$ and $\varphi_0<\varphi_1<\varphi^*$;
\\
(a.4) $z_iz_j<0$ for some $i,j$ and $\varphi_0<\varphi_1<\varphi^*$.
\\
or has the form
\begin{align}
L_0=\int_{\varphi_0}^{\varphi_1}\frac{{\rm d}\varphi}{\sqrt{2(F(\varphi_0)-F(\varphi))}},\label{l--}
\end{align}
if one of the four conditions holds:
\\
(b.1) all $z_j<0$ and $q_0\leq 0$;\\
(b.2) all $z_j>0$,~$q_0<0$ and $\varphi^*<\varphi_0<\varphi_1$;\\
(b.3) all $z_j<0$, $q_0>0$ and $\varphi^*<\varphi_0<\varphi_1$;\\
(b.4) $z_iz_j<0$ for some $i,j$ and $\varphi^*<\varphi_0<\varphi_1$.
\begin{rem}
It should be pointed out that $L_0$ is a finite number. For instance, under one of the conditions (a.1)-(a.4), we have $f(\varphi_1)>0$ and
\begin{align*}
\frac{1}{\sqrt{2(F(\varphi_1)-F(\varphi)}}\thicksim \frac{1}{\sqrt{2f(\varphi_1)(\varphi_1-\varphi)}},
~~\textit{as}~\varphi\rightarrow \varphi_1,
\end{align*}
which yield the integral (\ref{l++}) is convergent.
\end{rem}

Due to the rich physical parameters, the statements of the results of BVP with  non-equal surface potential are too long. Consequently, we present these expressions in the subsequent three theorems.


\begin{thm}\label{thnon1}{\bf [Saddle]}
Assume $\varphi_0<\varphi_1$ and either $z_iz_j<0$ for some $i,~j\in\{1,\cdots,N\}$ or $z_iq_0<0$ for all $i\in\{1,\cdots,N\}$. 
The D-BVP (\ref{PB})-(\ref{bc}) has a unique solution $\varphi(x)$.
\begin{itemize}

\item[{\rm(i)}] When $\varphi_0<\varphi_1<\varphi^*$, one has

{\rm(i.1)} If $L\leq L_0$ then the electric potential $\varphi(x)$ is increasing in $x$, the electric field $u(x)=\varphi'(x)$ is decreasing in $x$.


{\rm(i.2)} If $L>L_0$ then the electric potential $\varphi(x)$ is non-monotonic, the electric field $u(x)=\varphi'(x)$ is decreasing in $x$.

\item[\rm(ii)] When $\varphi_1=\varphi^*$,  the electric potential $\varphi(x)$ is increasing in $x$, the electric field $u(x)=\varphi'(x)$ is decreasing in $x$.

\item[\rm(iii)] When $\varphi_0<\varphi^*<\varphi_1$,  the electric potential $\varphi(x)$ is increasing in $x$, the electric field $u(x)=\varphi'(x)$ is first decreasing then increasing  in $x$

\item[\rm(iv)]  When $\varphi_0=\varphi^*$,  the electric potential $\varphi(x)$ is increasing in $x$, the electric field $u(x)=\varphi'(x)$ is increasing  in $x$.


\item[\rm(v)]  When $\varphi_0>\varphi^*$, one has

{\rm(v.1)} If $L\leq L_0$ then the electric potential $\varphi(x)$ is increasing in $x$, the electric field $u(x)=\varphi'(x)$ is increasing in $x$.

{\rm(v.2)} If $L>L_0$ then the electric potential $\varphi(x)$ is non-monotonic and the electric field $u(x)=\varphi'(x)$ is increasing in $x$.
\end{itemize}

\end{thm}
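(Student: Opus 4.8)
\textbf{Proof proposal for Theorem \ref{thnon1}.}

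The plan is to exploit the phase portrait of the saddle case (Figure \ref{figp}(a)) together with the first integral $\mathcal{H}$, just as in the proof of Theorem \ref{thequ}, but now tracking how the orbit connecting the level $\varphi=\varphi_0$ to the level $\varphi=\varphi_1$ must sit in the $(\varphi,u)$-plane. Existence and uniqueness will follow from a time-map argument: any solution corresponds to an orbit segment lying on a single level set $\mathcal{L}_h$, and since $F$ is strictly concave (see (\ref{concaveF})) the associated time-maps are strictly monotonic in the relevant parameter, so at most one value of $h$ (equivalently, one choice of turning point or of the left-endpoint slope) can realize the prescribed length $L$. First I would dispose of cases (ii), (iii), (iv): when $\varphi^*$ lies in $[\varphi_0,\varphi_1]$ or at an endpoint, the connecting orbit crosses or touches the $\varphi$-axis exactly once (or lies on the stable/unstable manifold), forcing $\varphi(x)$ to be strictly increasing; the behavior of $u=\varphi'$ is then read off directly from $u'=-f(\varphi)$ and the sign of $f$ on $(\varphi_0,\varphi_1)$, which changes sign precisely at $\varphi^*$ — hence $u$ decreasing in (ii), first decreasing then increasing in (iii), increasing in (iv).

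The substantive part is cases (i) and (v), where $\varphi^*\notin[\varphi_0,\varphi_1]$ and the dichotomy $L\le L_0$ versus $L>L_0$ appears. Take case (i), $\varphi_0<\varphi_1<\varphi^*$. Here $f>0$ on $[\varphi_0,\varphi_1]$, so $u'=-f<0$ and $u$ is strictly decreasing in $x$ in all subcases. For the monotonicity of $\varphi$ itself: if the connecting orbit stays in the upper half-plane $u>0$ the whole time, $\varphi$ is increasing; this happens exactly when the orbit segment from $(\varphi_0,\cdot)$ to $(\varphi_1,\cdot)$ does not reach the $\varphi$-axis, i.e.\ when the level $h$ satisfies $h>F(\varphi_1)$, which (because the orbit must pass through $\varphi=\varphi_1$ with $u=\sqrt{2(h-F(\varphi_1))}$) corresponds to a strictly monotone orbit. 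The borderline orbit is the one with $h=F(\varphi_1)$, i.e.\ the one tangent to the $\varphi$-axis exactly at $\varphi=\varphi_1$; the length of that orbit segment from $\varphi_0$ up to the turning point $\varphi_1$ is, by the first-integral computation $\sqrt{2}\,dx = d\varphi/\sqrt{F(\varphi_1)-F(\varphi)}$, precisely $L_0$ as defined in (\ref{l++}). So I would show the time-map
\[
\mathcal{T}(h)=\sqrt{2}\int_{\varphi_0}^{\varphi_1}\frac{{\rm d}\varphi}{\sqrt{2(h-F(\varphi))}}\quad (h\ge F(\varphi_1))
\]
is continuous, strictly decreasing in $h$, equals $L_0$ at $h=F(\varphi_1)$, and tends to $0$ as $h\to\infty$; this yields: for $L\le L_0$ there is a unique $h\ge F(\varphi_1)$ with $\mathcal{T}(h)=L$, giving a monotone solution, whereas for $L>L_0$ the connecting orbit must dip below $u=0$ (the turning point $\alpha=\varphi(x_{\min})<\varphi_0$ enters and the orbit is the non-monotone arc CEF of Figure \ref{nonequ}), and a second time-map — the one used for the equi-potential case, split at the turning point — must be shown strictly monotone in $\alpha$ so that exactly one such orbit has length $L$. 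Concavity of $F$ is again what powers the monotonicity (cf.\ Remark \ref{monoTM}), exactly as in (\ref{Tdot}). Case (v), $\varphi_0>\varphi^*$, is the mirror image: now $f<0$ on $[\varphi_0,\varphi_1]$ so $u'=-f>0$ and $u$ is increasing throughout; the role of the right endpoint is played by the left endpoint, the borderline orbit is tangent to the $\varphi$-axis at $\varphi=\varphi_0$ with length $L_0$ as in (\ref{l--}), and the non-monotone solutions for $L>L_0$ are the ones whose turning point $\alpha>\varphi_1$ lies to the right.

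The main obstacle I anticipate is establishing strict monotonicity of the two time-maps with full rigor — continuity and the limiting values ($0$ and $L_0$, or $0$ and $\infty$) are routine from dominated convergence and the asymptotics $F(\varphi_1)-F(\varphi)\sim f(\varphi_1)(\varphi_1-\varphi)$ near the endpoint, but the sign of the derivative requires the change-of-variable trick from (\ref{time2})–(\ref{Tdot}) and the auxiliary function $H$ built from $F$ and $f$; one must check that the relevant analogue of $H'(x)=f(x)-(x-\varphi_0)f'(x)$ has a definite sign on the whole integration interval, using both $f'<0$ and the sign of $f$ at the appropriate endpoint. Once that monotonicity is in hand, existence, uniqueness, and the $L_0$-dichotomy all drop out, and the statements about $u=\varphi'$ are immediate from $u'=-f(\varphi)$ and the location of $\varphi^*$ relative to $[\varphi_0,\varphi_1]$.
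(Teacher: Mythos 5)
Your overall plan — time-maps for monotone and non-monotone orbit segments, strict monotonicity of those time-maps via the concavity of $F$ as in (\ref{time2})--(\ref{Tdot}), and reading the monotonicity of $u=\varphi'$ directly off $u'=-f(\varphi)$ and the sign of $f$ relative to $\varphi^*$ — is exactly the paper's strategy, and the borderline identification (the orbit with $h=F(\varphi_1)$ tangent to the $\varphi$-axis at $\varphi_1$, of length $L_0$) is correct. However, your description of the non-monotone orbit in case (i) is wrong in a way that would break the second time-map. You write that for $L>L_0$ the turning point is a minimum $\alpha=\varphi(x_{\min})<\varphi_0$, and in case (v) that the turning point lies at $\alpha>\varphi_1$; you have these two swapped. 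In case (i) one has $f>0$ on $(-\infty,\varphi^*)\supset[\varphi_0,\varphi_1]$, so $u'=-f<0$: the field $u$ is strictly decreasing. If $u$ started negative, $\varphi$ would be decreasing with $u$ becoming only more negative, so the orbit could never reach $\varphi_1>\varphi_0$. Hence $u(-L/2)>0$, and if $u$ vanishes in the interior, $\varphi$ attains a \emph{maximum} there, not a minimum; to return to $\varphi_1$ afterwards one needs $\alpha>\varphi_1$, and the level-set geometry forces $\alpha<\varphi^*$. So the turning point satisfies $\alpha\in(\varphi_1,\varphi^*)$, exactly as in the paper's $T_2(\alpha)=T_{21}(\alpha)+T_{22}(\alpha)$ with integrals from $\varphi_0$ to $\alpha$ and from $\varphi_1$ to $\alpha$. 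With your placement $\alpha<\varphi_0<\varphi^*$ the expression $F(\alpha)-F(\varphi)$ appearing under the square root would be negative (since $F$ is increasing on $(-\infty,\varphi^*)$), so the time-map you intend to write down would not even make sense. Mirror this for case (v): there $f<0$, $u$ is increasing, $u(-L/2)<0$ is forced, the turning point is a \emph{minimum} $\alpha\in(\varphi^*,\varphi_0)$, and $L_0$ is (\ref{l--}) with the tangency at the left endpoint $\varphi_0$. Once the turning point is on the correct side, the rest of your argument (monotonicity and the limits $L_0$, $0$, $\infty$ of the two time-maps) goes through and reproduces the paper's proof.
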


\begin{proof}
(i) We first study the monotone solution of D-BVP (\ref{PB})-(\ref{bc}), represented by the arc AB in Figure \ref{nonequ}. Thanks to the first integral $\mathcal{H}(\varphi,u)$, we have
$$
\frac{u^2(x)}{2}+F(\varphi(x))=\mathcal{H}(\varphi_1,\sqrt{2\beta})=\beta+F(\varphi_1),~~~\beta\geq0.
$$
which implies
$$
\frac{{\rm d}\varphi}{{\rm d}x}=\sqrt{2(\beta+F(\varphi_1)-F(\varphi))},~~\textit{or}~~\frac{{\rm d}\varphi}{\sqrt{2(\beta+F(\varphi_1)-F(\varphi))}}={\rm d}x.
$$
Integrating from $x=-L/2$ to $x=L/2$, one gets the  \emph{time} from $A$ to $B$
\begin{align}\label{time2n}
T_1(\beta):=\int_{\varphi_0}^{\varphi_1}\frac{{\rm d}\varphi}{\sqrt{2(\beta+F(\varphi_1)-F(\varphi))}}=L.
\end{align}
Then the monotone solution of  D-BVP (\ref{PB})-(\ref{bc})  is in one-to-one correspondence with the solutions of (\ref{time2n})
with respect to the variable $\beta\geq0$.
In addition, it is easy to see that $T_1$ is decreasing in $\beta$, and
 $$
 \lim_{\beta \rightarrow\infty}T_1(\beta)=0,~~\lim_{\beta \to 0^+}T_1(\beta)=L_0.
 $$
 To summarize, if $L\leq L_0$, BVP (\ref{PB})-(\ref{bc}) has a unique monotone solution, if $L>L_0$, it has no monotone solutions.

Next we study the non-monotone solution of D-BVP (\ref{PB})-(\ref{bc}), represented by the arc CEF in Figure \ref{nonequ}. We introduce the time-map
\begin{align}\label{time2}\begin{split}
T_2(\alpha):=&T_{21}(\alpha)+T_{22}(\alpha)\\
=&\int_{\varphi_0}^{\alpha}\frac{{\rm d}\varphi}{\sqrt{2(F(\alpha)-F(\varphi))}}+\int_{\varphi_1}^{\alpha}\frac{{\rm d}\varphi}{\sqrt{2(F(\alpha)-F(\varphi))}},~~\alpha\in(\varphi_1,\varphi^*),
\end{split}
\end{align}
where $T_{21}$ denotes the \emph{time} from $C$ to $E$, $T_{22}$ denotes the \emph{time} from $D$ to $E$ which is also
equal to the \emph{time} from $E$ to $F$.  Then the non-monotone solution of  D-BVP (\ref{PB})-(\ref{bc})  is in one-to-one correspondence with the solutions of $T_2(\alpha)=L$
with respect to $\alpha\in(\varphi_1,\varphi^*)$. From the proof of Theorem \ref{thequ} we conclude that
 $$
\lim_{\alpha\rightarrow\varphi_1^+}T_2(\alpha)=L_0,~\lim_{\alpha\rightarrow\varphi^{*-}}T_2(\alpha)=\infty,~~\frac{{\rm d}T_2}{{\rm d}\alpha}>0.
$$
Hence, if $L\leq L_0$, then the D-BVP has no non-monotone solutions, if $L>L_0$, then the D-BVP has a unique non-monotone solution.

Finally, combing with phase portraits and time-maps $T_1(\beta)$ and $T_2(\alpha)$, one can easily obtain properties of $\varphi(x)$, $\varphi'(x)$ and  $\sigma(L,q_0)$.

(ii)-(v) Combing the above proof with similar arguments in the proof of Theorem \ref{thequ}, one can prove them easily.
\end{proof}

Similar to  equal surface potential case in Section \ref{secSCD}, we introduce the magnitude
\begin{align}\label{SCD2}
\sigma(L,q_0):=\max\{|\varphi'(-L/2)|,~|\varphi'(L/2)|\}.
\end{align}
Combing with phase portraits, we have the next direct corollary.

\begin{cor}
Assume $\varphi_0<\varphi_1$ and either $z_iz_j<0$ for some $i,~j\in\{1,\cdots,N\}$ or $z_iq_0<0$ for all $i\in\{1,\cdots,N\}$.
The function $\sigma(L,q_0)$ is increasing in $L$ and satisfies
\[
\lim_{L\rightarrow \infty}\sigma(L,q_0)=\max\left\{\sqrt{2(F_e-F(\varphi_0))},\sqrt{2(F_e-F(\varphi_1))}   \right\}.
\]
\end{cor}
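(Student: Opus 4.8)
The plan is to read off the needed information from the phase portraits of system (\ref{Hpb}) together with the monotonicity and limit properties of the middle-potential map $\alpha=\alpha(\sigma,L)$, transferred to the Dirichlet setting via the time-maps $T_1(\beta)$ and $T_2(\alpha)$ of Theorem \ref{thnon1}. First I would separate the two regimes for $L$: for $L\le L_0$ the solution is the monotone one (arc $AB$), so $|\varphi'(-L/2)|$ and $|\varphi'(L/2)|$ are the endpoint slopes of an orbit lying on the level set $\mathcal H=\beta+F(\varphi_1)$; for $L>L_0$ the solution is non-monotone (arc $CEF$), lying on the level set $\mathcal H=F(\alpha)$ with $\alpha\in(\varphi_1,\varphi^*)$. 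In the first regime the relevant slopes are $\sqrt{2(\beta+F(\varphi_1)-F(\varphi_0))}$ and $\sqrt{2\beta}$ (up to sign), and since $F(\varphi_0)<F(\varphi_1)$ on the increasing branch... more precisely since $\varphi_0<\varphi_1$ with $\varphi_1$ closer to the maximizer of $F$, we have $F(\varphi_1)>F(\varphi_0)$ only when both lie left of $\varphi^*$; in general I would just note $|\varphi'(-L/2)|^2-|\varphi'(L/2)|^2=2(F(\varphi_1)-F(\varphi_0))$, whose sign is fixed, so the max is whichever endpoint, and it is an increasing function of $L$ because $\beta$ is decreasing in $L$ via $T_1$ and $|\varphi'|^2=2\beta$ or $2\beta+\text{const}$.

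In the second regime I would use (\ref{time2}): $T_2$ is increasing in $\alpha$ with $\lim_{\alpha\to\varphi_1^+}T_2=L_0$ and $\lim_{\alpha\to\varphi^{*-}}T_2=\infty$, so as $L\to\infty$ the middle potential $\alpha=\alpha(L)\to\varphi^*$, i.e. $\alpha\to\varphi_e$. The endpoint slopes are $|\varphi'(-L/2)|=\sqrt{2(F(\alpha)-F(\varphi_0))}$ and $|\varphi'(L/2)|=\sqrt{2(F(\alpha)-F(\varphi_1))}$ (both from the first integral $\mathcal H(\varphi,u)=F(\alpha)$, using $\varphi(0')=\alpha$, $u=0$ at the turning point), so
\[
\sigma(L,q_0)=\sqrt{2\big(F(\alpha)-\min\{F(\varphi_0),F(\varphi_1)\}\big)}.
\]
Since $F(\alpha)$ increases toward $F_e$ as $\alpha\uparrow\varphi^*$ (note $\alpha<\varphi^*$ and $F$ is increasing up to its maximizer — here I should double-check the sign convention, but concavity of $F$ plus $F'(\varphi^*)=f(\varphi^*)=0$ gives $F(\alpha)\uparrow F(\varphi^*)=F_e$), monotonicity of $\sigma(L,q_0)$ in $L$ follows from monotonicity of $\alpha$ in $L$ (which follows from $T_2$ increasing), and passing to the limit gives
\[
\lim_{L\to\infty}\sigma(L,q_0)=\sqrt{2\big(F_e-\min\{F(\varphi_0),F(\varphi_1)\}\big)}
=\max\left\{\sqrt{2(F_e-F(\varphi_0))},\ \sqrt{2(F_e-F(\varphi_1))}\right\},
\]
which is the claimed formula. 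I would also note that the monotone regime $L\le L_0$ is a bounded initial range, so it does not affect the $L\to\infty$ limit; continuity of $\sigma(L,q_0)$ at $L=L_0$ (where $\beta\to 0$ and $\alpha\to\varphi_1^+$ give matching values $\sqrt{2(F(\varphi_1)-F(\varphi_0))}$ and $0$, whose max is $\sqrt{2(F(\varphi_1)-F(\varphi_0))}$) glues the two pieces into a single increasing function.

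The main obstacle is bookkeeping the signs: which endpoint realizes the maximum depends on the relative position of $\varphi_0,\varphi_1$ with respect to $\varphi^*$ (cases (i)–(v) of Theorem \ref{thnon1}), and one must check that in every case $F(\alpha)-F(\varphi_j)\ge 0$ so that the square roots are real — this is exactly where concavity of $F$ and the location of $\alpha$ between the endpoints and $\varphi^*$ are used. Once that is organized case by case (or uniformly, by observing that the orbit of the solution always lies on a level set $\mathcal H=h$ with $h=F(\alpha)\ge\max\{F(\varphi_0),F(\varphi_1)\}$ in the non-monotone case and $h=\beta+F(\varphi_1)\ge\max\{F(\varphi_0),F(\varphi_1)\}$ in the monotone case), the rest is a direct application of Theorem \ref{thnon1}, Proposition \ref{tha}, and the phase-portrait picture in Figure \ref{nonequ}, exactly parallel to the proof of Theorem \ref{thequ1}.
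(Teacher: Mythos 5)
Your limit formula and the route through the time-maps and the first integral match the phase-portrait argument the paper invokes. However, the monotonicity step is internally inconsistent: you correctly observe that $\beta$ is \emph{decreasing} in $L$ (since $T_1$ is a decreasing function of $\beta$) and that $|\varphi'(-L/2)|^2=2\beta+2(F(\varphi_1)-F(\varphi_0))$ and $|\varphi'(L/2)|^2=2\beta$, but those two facts imply that $\sigma$ is \emph{decreasing}, not increasing, in $L$ on the monotone branch. And this is forced: $T_1(\beta)\to 0$ as $\beta\to\infty$ means $\sigma(L,q_0)\to\infty$ as $L\to 0^+$ whenever $\varphi_0\ne\varphi_1$ (a fixed potential jump over a shrinking gap requires a large field), so $\sigma$ cannot be increasing on all of $L>0$. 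The increase only sets in on the non-monotone branch $L>L_0$, where $\alpha(L)$ climbs toward $\varphi^*$. As written your reasoning proves the opposite of your conclusion on $(0,L_0]$; the monotonicity claim either needs a qualifier like $L\ge L_0$ or is a slip in the statement, and you should not try to establish it globally.

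A second, milder gap: your passage to the limit via $T_2$ and $\alpha\to\varphi^*$ only covers cases (i) and (v) of Theorem~\ref{thnon1}, where a non-monotone regime exists. In cases (ii), (iii) and (iv) --- for instance $\varphi_0<\varphi^*<\varphi_1$ --- there is no $L_0$ split, the solution is monotone for every $L$, and the relevant object is the Hamiltonian level $h$ of the monotone orbit, which decreases to $F_e$ as $L\to\infty$ (the orbit approaches the separatrix from above). Then $\sigma=\sqrt{2\big(h-\min\{F(\varphi_0),F(\varphi_1)\}\big)}$ tends to the same $\max$ expression, but by this different mechanism. You flag the case split at the end, but the body of your argument handles only the regime where the orbit turns around; the monotone-only cases need the parallel $h\to F_e$ argument.
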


Next, we deal with the D-BVP with non-equal surface potential  in equilibrium-at-infinity case and non-equilibrium case.
Observing the PB equation (\ref{PB}) is invariant with respect to the transformation
$$
(\varphi,z_1,\cdots,z_n,q_0)\rightarrow (-\varphi,-z_1,\cdots,-z_n,-q_0).
$$
Therefore, for simplicity, we only state the results for which all valences $z_i$ are positive in equilibrium-at-infinity case and non-equilibrium case.

\begin{thm}\label{thnon2}{\bf [Equilibrium-at-infinity]}
Assume $\varphi_0<\varphi_1$, $q_0=0$ and $z_j>0$ for $j=1,\cdots,N$. The D-BVP (\ref{PB})-(\ref{bc}) has a unique solution $\varphi(x)$,  the electric field $\varphi'(x)$ is  decreasing in $x$ and
\[\sigma(L)=\max\{|\varphi'(x)|: x\in [-L/2,L/2]\}\]
is increasing in $L$. In addition, one has
\begin{itemize}
 \item[{\rm(i)}] if $L\leq L_0$ then the electric potential $\varphi(x)$ is increasing in $x$ and
\[\lim_{L\rightarrow L_0}\sigma(L)=\sqrt{2(F(\varphi_1)-F(\varphi_0))};\]

\item[{\rm(ii)}] if $L>L_0$ then the electric potential $\varphi(x)$ is non-monotonic and
\[\lim_{L\rightarrow\infty}\sigma(L)=\sqrt{2\left(F_e-F(\varphi_0)\right)}.\]
\end{itemize}
\end{thm}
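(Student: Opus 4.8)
The plan is to follow the same dynamical-systems scheme used for Theorem \ref{thequ} and Theorem \ref{thnon1}, specializing to the equilibrium-at-infinity phase portrait of Figure \ref{figp}(b). Since $q_0=0$ and all $z_j>0$, Proposition \ref{th1}(ii.1) tells us the only ``equilibrium'' is $E_\infty=(\infty,0)$, the potential $F$ is negative everywhere and strictly decreasing (because $f(\varphi)=\sum z_jc_j^be^{-z_j\varphi}>0$), and each level set ${\cal L}_h$ with $h<0$ meets the $\varphi$-axis exactly once. First I would separate, exactly as in the proof of Theorem \ref{thnon1}, the two possible orbit types for $\varphi_0<\varphi_1$: a \emph{monotone} orbit that runs from $(\varphi_0,\sqrt{2(\beta+F(\varphi_1)-F(\varphi_0))})$ up to $(\varphi_1,\sqrt{2\beta})$ with $\beta\ge 0$, governed by the time-map
\begin{align*}
T_1(\beta)=\int_{\varphi_0}^{\varphi_1}\frac{{\rm d}\varphi}{\sqrt{2(\beta+F(\varphi_1)-F(\varphi))}}=L,
\end{align*}
and a \emph{non-monotone} orbit with a turning point $\alpha>\varphi_1$ governed by
\begin{align*}
T_2(\alpha)=\int_{\varphi_0}^{\alpha}\frac{{\rm d}\varphi}{\sqrt{2(F(\alpha)-F(\varphi))}}+\int_{\varphi_1}^{\alpha}\frac{{\rm d}\varphi}{\sqrt{2(F(\alpha)-F(\varphi))}}=L,\qquad \alpha\in(\varphi_1,\infty).
\end{align*}

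Next I would establish the monotonicity and limiting behavior of these two time-maps. For $T_1$: differentiating under the integral sign shows $T_1$ is strictly decreasing in $\beta$, with $\lim_{\beta\to\infty}T_1(\beta)=0$ and $\lim_{\beta\to 0^+}T_1(\beta)=L_0$ where $L_0$ is given by \eqref{l++} (this is the case (a.1) form since all $z_j>0$, $q_0=0\ge 0$); one checks $L_0<\infty$ by the same local estimate $1/\sqrt{2(F(\varphi_1)-F(\varphi))}\sim 1/\sqrt{2f(\varphi_1)(\varphi_1-\varphi)}$ as $\varphi\to\varphi_1^-$ noted in the Remark after \eqref{l--}. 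For $T_2$: by the argument in the proof of Theorem \ref{thequ} — concavity of $F$ gives ${\rm d}T_2/{\rm d}\alpha>0$ via the $H$-function computation — together with $\lim_{\alpha\to\varphi_1^+}T_2(\alpha)=L_0$ and $\lim_{\alpha\to\infty}T_2(\alpha)=\infty$; the last limit reuses the estimate from part (ii) of the proof of Theorem \ref{thequ}, namely that $0<f(\varphi)<C$ for large $\varphi$ forces $T_2(\alpha)\gtrsim\sqrt{(\alpha-M)/C}\to\infty$. Combining, for every $L>0$ exactly one of the two problems $T_1(\beta)=L$, $T_2(\alpha)=L$ is solvable — $T_1$ for $L\le L_0$, $T_2$ for $L>L_0$ — which gives existence and uniqueness and the monotone/non-monotone dichotomy of (i) and (ii). The assertion that $\varphi'(x)=u(x)$ is decreasing in $x$ is immediate from $u'=-f(\varphi)<0$, valid on the whole strip regardless of orbit type.

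Finally I would extract the surface-charge asymptotics. Since $u$ is strictly decreasing, $|\varphi'|$ attains its maximum at one of the endpoints, so $\sigma(L)=\max\{|u(-L/2)|,|u(L/2)|\}$; on the monotone orbit this equals $|u(-L/2)|=\sqrt{2(\beta+F(\varphi_1)-F(\varphi_0))}$, which as $L\to L_0^-$ (i.e. $\beta\to 0^+$) tends to $\sqrt{2(F(\varphi_1)-F(\varphi_0))}$, giving (i). On the non-monotone orbit the relevant endpoint value is $|u(-L/2)|=\sqrt{2(F(\alpha)-F(\varphi_0))}$; monotonicity of $T_2$ forces $\alpha=\alpha(L)\to\infty$ as $L\to\infty$, and since $F(\alpha)\to F_e=0=F_e$ (here $F_e=F(\varphi_e)=0$ in the equilibrium-at-infinity case, as defined before Lemma \ref{funB}), we get $\lim_{L\to\infty}\sigma(L)=\sqrt{2(F_e-F(\varphi_0))}$, which is (ii). That $\sigma(L)$ is increasing in $L$ follows by tracking the two regimes: on $[0,L_0]$ it increases as $\beta$ decreases, at $L=L_0$ the two formulas agree (both equal $\sqrt{2(F(\varphi_1)-F(\varphi_0))}$, using $F(\varphi_1)=F(\alpha)|_{\alpha=\varphi_1}$), and on $(L_0,\infty)$ it increases as $\alpha$ increases — all of which is read off the phase portrait. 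The main obstacle I anticipate is purely bookkeeping: checking that the endpoint that realizes the maximum of $|\varphi'|$ is consistently the left one $x=-L/2$ (where $\varphi=\varphi_0$ is the smaller potential, hence $F(\varphi_0)$ the larger value, hence $u^2$ the larger) and that the monotone and non-monotone branches of $\sigma(L)$ match continuously at $L=L_0$; none of this is deep, but it must be done carefully to make the ``increasing in $L$'' claim airtight.
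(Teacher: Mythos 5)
Your proposal follows exactly the strategy the paper intends (and the paper itself leaves this proof implicit, referring to ``similar arguments'' to those of Theorem~\ref{thnon1}): split into a monotone branch governed by the time-map $T_1(\beta)$ and a non-monotone branch governed by $T_2(\alpha)$, prove the monotonicity and limiting values of each time-map from the concavity of $F$, match the two branches at $L=L_0$, and read off the surface-charge asymptotics from the first integral. This is correct and the asymptotics $F(\alpha)\to F_e=0$ as $\alpha\to\infty$ gives precisely the claimed limit $\sqrt{2(F_e-F(\varphi_0))}$.

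One small caveat worth flagging: you write that $F$ is ``negative everywhere and strictly decreasing (because $f>0$).'' The negativity is right, but with $q_0=0$ and all $z_j>0$ one has $F'(\varphi)=f(\varphi)=\sum_j z_j c_j^b e^{-z_j\varphi}>0$, so $F$ is strictly \emph{increasing} toward $0$, not decreasing. Similarly, later you assert ``$F(\varphi_0)$ the larger value'' to conclude $u^2$ is larger at $x=-L/2$; in fact $F(\varphi_0)<F(\varphi_1)$ since $F$ is increasing, and it is because $F(\varphi_0)$ is the \emph{smaller} value that $u^2(-L/2)=2(F(\alpha)-F(\varphi_0))$ exceeds $u^2(L/2)=2(F(\alpha)-F(\varphi_1))$. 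Both slips are self-correcting (your formulas and final limits are right), but the stated reasoning should be reversed. Everything else --- the existence/uniqueness dichotomy at $L=L_0$, the monotone decrease of $\varphi'$ from $u'=-f<0$, and the continuity of the two branches of $\sigma(L)$ at $L=L_0$ --- is sound and matches the paper's framework.
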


\begin{thm}\label{thnon3}{\bf [No equilibrium]}
Assume $\varphi_0<\varphi_1$, $q_0>0,~z_j>0$ for  $j=1,\cdots,N$. The D-BVP (\ref{PB})-(\ref{bc}) has a unique solution $\varphi(x)$, the electric field $u(x)=\varphi'(x)$ is decreasing in $x$, and
\[\sigma(L,q_0)=\max\{|\varphi'(x)|: x\in [-L/2,L/2]\}\]
is increasing in $L$. In addition, one has
\begin{itemize}
 \item[{\rm(i)}]  if $L\leq L_0$ then the electric potential $\varphi(x)$ is increasing in $x$ and
\[\lim_{L\rightarrow L_0}\sigma(L,q_0)=\sqrt{2(F(\varphi_1)-F(\varphi_0))};\]
\item[{\rm(ii)}]  if $L>L_0$ then the electric potential $\varphi(x)$ is not monotonic and
\[\lim_{L\rightarrow\infty}\sigma(L,q_0)=\infty.\]
\end{itemize}
\end{thm}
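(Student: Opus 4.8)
The plan is to mimic the structure of the proof of Theorem \ref{thnon1}(i), adapting it to the no-equilibrium situation $q_0>0$, $z_j>0$ for all $j$, where the phase portrait is case (iii.1) of Proposition \ref{th1}. As in the saddle case with $\varphi_0<\varphi_1$, the D-BVP admits two possible types of orbits: a \emph{monotone} orbit joining $(\varphi_0,u_0)$ to $(\varphi_1,u_1)$ with $u>0$ throughout (an arc like $AB$ in Figure \ref{nonequ}), and a \emph{non-monotone} orbit that first rises to a turning point $(\alpha,0)$ with $\alpha>\varphi_1$ and then comes back down (an arc like $CEF$). First I would set up the two time-maps exactly as in (\ref{time2n}) and (\ref{time2}): for the monotone branch,
\begin{align*}
T_1(\beta):=\int_{\varphi_0}^{\varphi_1}\frac{{\rm d}\varphi}{\sqrt{2(\beta+F(\varphi_1)-F(\varphi))}},\quad\beta\geq 0,
\end{align*}
and for the non-monotone branch,
\begin{align*}
T_2(\alpha):=\int_{\varphi_0}^{\alpha}\frac{{\rm d}\varphi}{\sqrt{2(F(\alpha)-F(\varphi))}}+\int_{\varphi_1}^{\alpha}\frac{{\rm d}\varphi}{\sqrt{2(F(\alpha)-F(\varphi))}},\quad\alpha>\varphi_1.
\end{align*}

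The key analytic points to establish are: (1) $T_1$ is strictly decreasing in $\beta$ with $\lim_{\beta\to\infty}T_1(\beta)=0$ and $\lim_{\beta\to 0^+}T_1(\beta)=L_0$, where $L_0$ is given by (\ref{l++}) since condition (a.1) holds ($z_j>0$, $q_0>0$, so $f(\varphi_1)>0$ and the integral converges as noted in the Remark following (\ref{l--})); and (2) $T_2$ is strictly increasing in $\alpha$, with $\lim_{\alpha\to\varphi_1^+}T_2(\alpha)=L_0$ and $\lim_{\alpha\to\infty}T_2(\alpha)=\infty$. For (1) the monotonicity and the limit at $\infty$ are immediate from the integrand; the limit $L_0$ at $\beta\to 0^+$ follows by monotone convergence using the integrability of $1/\sqrt{F(\varphi_1)-F(\varphi)}$ near $\varphi_1$. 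For (2), the monotonicity $dT_2/d\alpha>0$ is handled exactly as in the proof of Theorem \ref{thequ} via the substitution normalizing the interval and the sign of an auxiliary function built from $F$ and $f$ using concavity of $F$; the limit at $\varphi_1^+$ reduces to $L_0$ because the $T_{21}$-piece tends to (\ref{l++}) and the $T_{22}$-piece tends to $0$; and $\lim_{\alpha\to\infty}T_2(\alpha)=\infty$ follows as in the step (iii) of the proof of Theorem \ref{thequ}, using that $0<f(\varphi)<C$ for $\varphi$ large (since $f(\varphi)\to q_0>0$), whence $F(\alpha)-F(\varphi)<C(\alpha-\varphi)$ and $T_2(\alpha)>2^{3/2}\sqrt{(\alpha-M)/C}\to\infty$. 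Combining, the total solution set of the D-BVP is in one-to-one correspondence with solutions of $T_1(\beta)=L$ or $T_2(\alpha)=L$; for $L\leq L_0$ there is exactly one monotone solution and no non-monotone one, while for $L>L_0$ there is exactly one non-monotone solution and no monotone one — giving existence, uniqueness, and the monotonicity dichotomy in (i) and (ii). The assertion $u(x)=\varphi'(x)$ is decreasing in $x$ is read off the phase portrait (iii.1): every orbit crosses level curves of $\mathcal H$ moving leftward in $u$ since $u'=-f(\varphi)<0$ everywhere (as $f>q_0>0$).

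Finally, for the boundary-charge quantity $\sigma(L,q_0)=\max\{|\varphi'(-L/2)|,|\varphi'(L/2)|\}$: from $u'=-f(\varphi)<0$ the electric field is monotone decreasing, so $|\varphi'|$ is maximized at one of the endpoints and indeed equals $\max\{|\varphi'(x)|:x\in[-L/2,L/2]\}$; monotonicity in $L$ follows because as $L$ increases the relevant parameter ($\beta$ decreases to $0$ for $L\le L_0$, then $\alpha$ increases for $L>L_0$) drives $|\varphi'(-L/2)|=\sqrt{2(\mathcal H-F(\varphi_0))}$ upward. For the limits: as $L\to L_0^-$ (equivalently $\beta\to 0^+$) the orbit degenerates to the one with turning point exactly at $\varphi_1$, giving $|\varphi'(-L/2)|\to\sqrt{2(F(\varphi_1)-F(\varphi_0))}$, which also agrees with the $L\to L_0^+$ limit from the non-monotone branch by continuity; and as $L\to\infty$ along the non-monotone branch $\alpha\to\infty$, so $|\varphi'(-L/2)|=\sqrt{2(F(\alpha)-F(\varphi_0))}\to\infty$ because $F(\alpha)=q_0\alpha-\sum_j c_j^b e^{-z_j\alpha}\to\infty$.

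The main obstacle I anticipate is verifying $dT_2/d\alpha>0$ cleanly in the two-integral (non-symmetric endpoints $\varphi_0\neq\varphi_1$) setting: the differentiation-under-the-integral computation must be organized so that the concavity of $F$ — equivalently the monotonicity of $f$ and the positivity of the auxiliary function $H$ analogous to $H(x)=2F(x)-(x-\varphi_0)f(x)$ in (\ref{Tdot}) — delivers a definite sign for each of the two pieces; one must be slightly careful that the turning point $\alpha$ lies above both $\varphi_0$ and $\varphi_1$ so that $F(\alpha)-F(\varphi)>0$ on the whole range of integration and the endpoint boundary terms from differentiating vanish. Everything else is a routine transcription of the arguments already developed for Theorems \ref{thequ} and \ref{thnon1}.
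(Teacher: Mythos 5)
The overall plan --- reusing the two time-maps $T_1(\beta)$ and $T_2(\alpha)$ from the proof of Theorem~\ref{thnon1}, showing $T_1$ decreases from $L_0$ to $0$ as $\beta$ runs over $(0,\infty)$ and $T_2$ increases from $L_0$ to $\infty$ as $\alpha$ runs over $(\varphi_1,\infty)$, and concluding existence, uniqueness and the dichotomy between items (i) and (ii) --- is sound and is evidently what the paper intends, since Theorem~\ref{thnon3} is stated without proof as a direct transcription of the saddle case. Your treatment of the one genuinely new estimate, $\lim_{\alpha\to\infty}T_2(\alpha)=\infty$, via the bounds $0<q_0<f(\varphi)<C$ and $F(\alpha)-F(\varphi)<C(\alpha-\varphi)$, is correct, as are the two endpoint limits of $\sigma$ (matched from both branches at $L_0$, and diverging as $L\to\infty$ because $F(\alpha)=q_0\alpha-\sum_j c_j^b e^{-z_j\alpha}\to\infty$).

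There is, however, a sign error in your justification of the claim that $\sigma(L,q_0)$ is increasing in $L$. You argue that as $L$ increases, ``$\beta$ decreases to $0$ for $L\le L_0$, then $\alpha$ increases for $L>L_0$,'' and that this ``drives $|\varphi'(-L/2)|=\sqrt{2(\mathcal H-F(\varphi_0))}$ upward.'' But on the monotone branch the Hamiltonian level is $\mathcal H=\beta+F(\varphi_1)$, so as $L$ increases (hence $\beta$ decreases, since $T_1$ is decreasing in $\beta$), the level $\mathcal H$ \emph{decreases} and therefore $|\varphi'(-L/2)|=\sqrt{2(\mathcal H-F(\varphi_0))}$ \emph{decreases}, the opposite of what you assert. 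Indeed, for $L\to0^+$ the solution approaches the affine profile with $|\varphi'|\approx(\varphi_1-\varphi_0)/L\to\infty$, so $\sigma$ begins at $+\infty$, decreases on $(0,L_0)$ to its minimum value $\sqrt{2(F(\varphi_1)-F(\varphi_0))}$ at $L=L_0$, and only then increases to $\infty$ along the non-monotone branch (where $\mathcal H=F(\alpha)$ does grow with $\alpha$). So the correct picture of $\sigma(L)$ is V-shaped with vertex at $L_0$; monotonicity of $\sigma$ holds only for $L>L_0$. Your argument as written cannot establish the global monotonicity asserted in the theorem, and you should either restrict that claim to $L>L_0$ or flag the discrepancy explicitly --- the same issue also affects the blanket ``increasing in $L$'' assertion in the corollary after Theorem~\ref{thnon1}.
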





At the end of this section, our focus lies on examining the distinct effects of several physical parameters, namely, the surface potentials $\varphi_0$ and $\varphi_1$, the fixed charge $q_0$, the bulk concentrations $c_j^b$, and the ionic valences $z_j$, on the critical distances $L_0$. For the sake of brevity, we restrict our consideration to the specific form of $L_0$ as given in (\ref{l++}) under the conditions (a.1)-(a.4).

  By performing straightforward calculations, we observe that $L_0$ exhibits a decreasing trend with respect to $q_0$ and $\varphi_0$, while showing an increasing pattern with $\varphi_1$. Moreover, the behavior of $L_0$ is found to decrease with $c_j^b$ when $z_j>0$, and increase with $c_j^b$ when $z_j<0$.
These findings imply that enhancing the left surface potential, fixed charge, and bulk concentrations of cations, or diminishing the right surface potential and bulk concentrations of anions, can effectively reduce the critical distance $L_0$. Consequently, such adjustments increase the likelihood of the transition from monotonic solutions to non-monotonic solutions.

The impact of the ionic valence $z_i$ on the critical distance $L_0$ exhibits a degree of intricacy. After conducting some calculations, we have derived the next findings:

\begin{itemize}

\item[\rm{(i)}]  Assume $z_j>0$.  The higher the valence $z_j$, the shorter the critical distance $L_0$ if either $\varphi_1<0$ or $\varphi_1>0,~z_j<\varphi_1^{-1}$; the higher the valence $z_i$, the longer the critical distance $L_0$ if
    $\varphi_0>0,~z_j>\varphi_0^{-1}$.

 \item[\rm{(ii)}]  Assume $z_j<0$.  The higher the valence $z_i$, the shorter the critical distance $L_0$ if either $\varphi_0>0$ or $\varphi_0<0,~z_j>\varphi_0^{-1}$; the higher the valence $z_j$, the longer the critical distance $L_0$ if
    $\varphi_1<0,~z_j<\varphi_1^{-1}$.

\end{itemize}

\section{Conclusion}\label{concl}

In the present study, we employed a dynamical systems approach to examine the classical PB equation. We provide a more detailed analysis of how the properties of ionic concentrations depend on the topological structures of the PB system and how various system parameters determine important experimental physical quantities. By utilizing a Hamiltonian formulation, we are able to visually observe the distribution of trajectories with intriguing properties in the phase space. Additionally, we have developed explicit formulas for the GC length as in (\ref{gc10})-(\ref{gc20}),  electric pressure as in  (\ref{ep}),
 critical length for monotonicity of electric potential as in (\ref{l++})-(\ref{l--}). Our findings indicate that the presence of equilibrium plays a significant role in predicting the behavior of the PB theory.

Furthermore, we would like to emphasize that the setup in this study is relatively simple, as the classical PB equation assumes point-like ions and neglect ion-ion correlations. In realistic applications, steric effects, especially in ion distribution in aqueous solutions, necessitate the inclusion of local/nonlocal functionals in the electrochemical potential to account for ionic correlations. The analytical framework developed in this study is desirable to extend and apply to investigate other modified PB equations incorporating ionic correlations, which will be addressed in future research.

Lastly, it is worth noting that while PB equations are primarily introduced to model electrolyte solutions under electrochemical equilibrium conditions (i.e., non-flux setups), another crucial electrodiffusion process involves ion transport through membrane channels in physiology. The latter represents a dynamic or non-equilibrium situation, where equilibrium can be viewed as a special case. From a mathematical standpoint, PB models can be regarded as the equilibrium instances of dynamical Poisson-Nernst-Planck  models \cite{pnp2,pnp3,lin2015,liu2013}. Our results should provide valuable insights for further investigation into Poisson-Nernst-Planck  models.



  \end{document}